\definecolor{gray}{rgb}{0.93,0.93,0.93}
\definecolor{light-gold}{rgb}{0.99,0.97,0.78}
\definecolor{gold}{rgb}{0.7,0.55,0}
\newcommand\cec{}
\newcommand\jeb{}
\newcommand\peter[1]{#1}
\def\be{\begin{equation}}
\def\ee{\end{equation}}
\def\bm{\begin{multline}}
\def\bfig{\begin{figure}[htb]}
\def\efig{\end{figure}}
\newcommand{\dd}{{\rm d}}
\newcommand{\e}[1]{\,{\rm e}^{#1}\,}
\numberwithin{equation}{section}
\newtheorem{theorem}{Theorem}[section]
\newtheorem{proposition}[theorem]{Proposition}
\newtheorem{lemma}[theorem]{Lemma}
\newtheorem{corollary}[theorem]{Corollary}
\newtheorem{assumption}[theorem]{Assumption}
\newtheorem{example}[theorem]{Example}
\newcommand{\eps}{{\varepsilon}}
\def\ba#1\ea{\begin{linenomath}\begin{align*}#1\end{align*}\end{linenomath}}
\def\ban#1\ean{\begin{linenomath}\begin{align}#1\end{align}\end{linenomath}}
\newcommand{\bs}{\boldsymbol}
  \def\tagform@#1{\maketag@@@{\scriptsize{(#1)}\@@italiccorr}}
\renewcommand{\eqref}[1]{(\ref{#1})}
\newcommand{\EE}{\mathbb{E}} 
\newcommand{\PP}{\mathbb{P}}
\newcommand{\oo}{\infty}
\begin{document}


\title[A two-table theorem for a disordered
  Chinese restaurant process]{
  {A two-table theorem for a disordered
  Chinese restaurant process}}

\author[Bj\"ornberg]{Jakob E. Bj\"ornberg}
\address{Department of Mathematics,
Chalmers University of Technology and the University of Gothenburg,
Sweden}
\email{jakob.bjornberg@gu.se}
 
\author[Mailler]{C\'ecile Mailler}
\address{Department of Mathematical Sciences, University of Bath, Bath BA2 7AY, U.K.}
\email{c.mailler@bath.ac.uk}

\author[M\"orters]{Peter M\"orters}
\address{Universit\"at zu K\"oln, Department Mathematik, Weyertal 86--90, 50931 K\"oln,
Germany}
\email{moerters@math.uni-koeln.de}

\author[Ueltschi]{Daniel Ueltschi}
\address{Department of Mathematics, University of Warwick,
Coventry, CV4 7AL, U.K.}
\email{daniel@ueltschi.org}

\subjclass{60E10, 60G57, 60K35}


\begin{abstract} 
We investigate a disordered variant of Pitman's
Chinese restaurant process where tables carry i.i.d.\ weights.  
Incoming customers choose to 
sit at an occupied table with a probability proportional to the
product of its occupancy and its weight, {or they sit at an unoccupied table with a probability proportional  to a parameter $\theta>0$.} 
This is a system out of equilibrium where the proportion of customers
at any given table converges to zero almost surely. 
We show that for weight distributions in any of the three extreme
value classes, Weibull, Gumbel or Fr\'echet, the proportion of
customers sitting at the largest table converges  
to one in probability, {but not almost surely}, 
and the proportion of
customers sitting at either of the largest two tables converges to one
almost surely. 
\end{abstract}

\thanks{\copyright{} 2023 by the authors. This paper may be reproduced, in its entirety, for non-commercial purposes.}

\maketitle

\vspace{-.8cm}

\section{Introduction}

{Markets 
out of equilibrium  often follow a winner-takes-all dynamics by which competition allows the best performers to rise to the top at the expense of the losers~\cite{CF95}. In expanding markets,
as time passes, more competitive performers emerge and take the place of the current winner. In this paper we study  a simple model of this phenomenon, exploring the way in which new competitors take over from the current winners. In our model, a quality is attached to any product put on the market. When a new customer enters the market, a product is selected on the basis of its quality and on the number of customers that have chosen the product so far. This model is a disordered variant of the Chinese restaurant process of Dubins and Pitman~\cite{Pit06},
{\peter{see also~\cite{Aldous}}}. In this analogy 
customers enter a fictitious Chinese restaurant and choose a table to sit on; there is competition between tables in order to attract customers. We use this terminology throughout the paper.%
\smallskip

 More precisely, at first occupancy, a positive random 
`fitness', or `weight', is attached to each table, 
independently of everything else, according to
a fixed distribution~$\mu$. 
A new customer either joins an already occupied table, 
with probability proportional to both its fitness and the number of customers already sitting there, 
or sits at a new table, with probability proportional to a fixed parameter~$\theta>0$. The {proportion of customers at each table in the disordered Chinese restaurant process generates a dynamic random partition, representing the market share of each product in our earlier interpretation,
parametrised 
by a positive real number~$\theta$,
and a probability distribution~$\mu$ on the interval $(0,\infty)$. 
The aim of this paper is to understand the evolution of the 
largest tables in  the disordered Chinese restaurant process, representing the market share of the leading products.}
\pagebreak[3]\bigskip

In the classical model of~\cite{Aldous} and~\cite{Pit06},
the random partition (with elements in decreasing order)
converges in distribution to a Poisson-Dirichlet distribution of parameter~$\theta$.
For more information on the classical Chinese restaurant process, we refer the reader to, e.g., \cite{Pit06},  and the references therein. The introduction of the disorder radically changes the behaviour of the process
since, contrary to what happens in the classical case,
the proportion of customers sitting at any fixed table converges 
almost surely to zero as time goes to infinity.
This is because fitter and fitter tables keep entering the system.
This paper aims at answering the following questions:\smallskip

{\it What proportion of customers sit at the largest table at time $n$, i.e.\ when there are $n$ customers in the restaurant?
{\it What is the weight of this table?}
{\it When was this table first occupied?}}\smallskip

Our two main results are that:
\begin{itemize}
\item The proportion of customers sitting at the largest table converges to~one {\it in probability} as the number of customers grows to infinity, see Theorem~\ref{thm:A}. This result does not hold almost surely.
\item The proportion of customers sitting at the largest table {\it or at the second largest table} converges to~one {\it almost surely} as the number of customers grows to infinity, see Theorem~\ref{thm:B}.
\end{itemize}
We call Theorem~\ref{thm:B} the `two-table' theorem, 
as a reference to the parabolic Anderson `two-city' theorem, see~\cite{KLMS}.
Although the parabolic Anderson model is not at all related to the Chinese restaurant process, 
our results are reminiscent of those of~\cite{KLMS}, which they describe intuitively as follows:
``{\it at a typical large time, the mass, which is thought
of as a population, inhabits one site, interpreted as a city. At some rare times,
however, word spreads that a better site has been found, and the entire population
moves to the new site, so that at the transition times part of the population still lives
in the old city, while another part has already moved to the new one}''.
A similar interpretation holds in our setting, with tables replacing cities, and customers replacing the elements of the population.\smallskip

The proofs of our results rely on embedding 
the disordered Chinese restaurant process into continuous time. 
In this embedding, new tables are created at the jump times 
of a Poisson process of parameter~$\theta$, 
and the number of customers at each table is a Yule process 
whose parameter equals the weight of the table. This is reminiscent of the continuous-time embedding of the preferential attachment graph with fitnesses of Bianconi and Barab\'asi~\cite{BB01, Borgs}.
{ Our proof of Theorem~\ref{thm:A} relies on methods developed in~\cite{DMM, MMS} for the study of the  Bianconi-Barab\'asi model. It holds under a quite general assumption on the fitness distribution~$\mu$, we just ask that it belongs to an extreme value class, see Assumption~\ref{H}.
In particular, we allow the fitness distribution to have unbounded support. We are also able to give 
estimates of when the largest table at time~$n$ was first occupied, and of its weight.
 For the proof of Theorem~\ref{thm:B}, the `two-table theorem', a much refined analysis is needed.} Theorem~\ref{thm:B} holds under stronger assumptions on~$\mu$, see Assumptions~\ref{W},~\ref{F}, and~\ref{G}, depending on which extreme value class~$\mu$ belongs to; 
in Appendix~B we show that these assumptions are satisfied by a number of special cases of fitness distributions.
{We next give a formal definition of our model (Section~\ref{sub:def})
  and state our main results (Section~\ref{sub:main_results}).}

\subsection*{Acknowledgements}

The research of JEB was supported by 
\emph{Vetenskapsr{\aa}det, grant 2019-04185},
by \emph{Ruth och Nils Erik Stenb\"acks stiftelse}, 
and by the Sabbatical Program at the Faculty
of Science, University of Gothenburg.

\subsection{Mathematical definition of the model}\label{sub:def}
The weighted Chinese restaurant process
is a Markov process $(S_i(n)\colon i\geq 1)_{n\geq 0}$ 
taking values in the set of all sequences 
$(s_{i})_{i\geq 1}$ of nonnegative integers such that there exists $k\in\mathbb N$ with $s_i=0$ if and only if $i>k$. For all $n$, we call $S_i(n)$ the size of the $i$-th table at time $n$, 
and $K_n=\max \{i\geq 1\colon S_i(n)\neq 0\}$ the number of
occupied tables in the restaurant at time $n$. 
We sample a sequence $(W_i)_{i\geq 1}$ of i.i.d.\ random variables of distribution $\mu$, the weights or fitnesses. Given this sequence, the process is recursively defined. At time zero, $S_1(0) = 1$ and $S_i(n)=0$ for all $i\geq 2$.
Given the configuration at time~$n$, i.e.\ 
$(S_i(n))_{i\geq 1}$ either
\begin{itemize}[leftmargin=*]
\item the $(n+1)$-th customer enters the restaurant and
 sits at the $i$-th table, meaning that 
$S_i(n+1)=S_i(n)+1$ and $S_j(n+1)=S_j(n)$ for $j\neq i$,
 this happens with probability proportional to
 $W_iS_i(n)$; 
 \item or the $(n+1)$-th customer sits at a new
 table (table number $K_n+1$), meaning that 
$S_{K_n+1}(n+1)=1$ and $S_i(n+1)=S_i(n)$ for $i\leq K_n$,
with probability proportional to $\theta$.
\end{itemize}
The classical case of Pitman's process arises when the fitnesses are
deterministic, i.e.\ all tables have the same fitness. The case of
interest for us  is when $\mu$ has no mass at its essential supremum
(which may be finite or infinite) so  that fitter tables keep
emerging. Under this assumption, the following basic properties hold,
see Appendix~\ref{app:basic} for the proof. 
\bigskip

\begin{proposition}[Basic properties of the weighted Chinese Restaurant
  process]
\label{rem:basic}
\hfill
{\it \begin{itemize}[leftmargin=*]
\item[(i)] The number of occupied tables $K_n$ when the $n$th customer
  enters the restaurant satisfies 
\[
\lim_{n\to\infty} \frac{K_n}{\log n} = \frac\theta{{\rm essup } \, \mu}
\qquad \mbox{almost surely,}
\]
where the right hand side is interpreted as zero if the fitnesses are
unbounded.\smallskip 

\item[(ii)] For every $k\geq 1$
\[
S_k(n)\to\oo \qquad\mbox{and}\qquad
\frac{S_k(n)}{n}\to 0 \qquad\mbox{almost surely as }n\to\oo.
\]
Hence every fixed table has microscopic
 occupancy. 
\smallskip

\item[(iii)] There is no persistence of the table with maximal
  occupancy. In other words, the time $B_n$ at which the most occupied
  table at time $n$ gets its first occupany goes to infinity almost
  surely. 
  
  \smallskip

\item[(iv)] {The proportion of customers {\peter{ sitting at}} the largest table
\[
\max_{i\geq 1} \frac{S_i(n)}{n} 
\]
does not converge to one, almost surely.}  
\end{itemize}}
\end{proposition}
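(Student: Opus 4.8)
The plan is to argue by contradiction. Write $M(n)=\max_{i\ge 1}S_i(n)$, and recall that the total number of customers present at time $n$ equals $\sum_{i\ge1}S_i(n)=n+1$. Suppose that on an event of positive probability one has $M(n)/n\to 1$, and intersect this event with the almost sure event on which part~(ii) holds, i.e.\ $S_k(n)/n\to0$ for every fixed $k\ge1$ (the exceptional null set being a countable union). On this event, for all $n$ large enough $M(n)>3n/4$, and then the table realising the maximum is \emph{unique}: two distinct tables both of size $M(n)$ would contribute $2M(n)>3n/2>n+1$ to the total, which is impossible. Denote this unique largest table at time $n$ by $\ell(n)$.

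I would then show that $\ell(n)$ is eventually constant. Between consecutive times exactly one customer arrives, so $S_j(n+1)\le S_j(n)+1$ for every $j$. If $\ell(n)\ne\ell(n+1)$ for some large $n$, set $i=\ell(n)$ and $j=\ell(n+1)$; since $i\ne j$ we get $S_j(n)\le (n+1)-S_i(n)=(n+1)-M(n)<n/4+1$, hence $S_j(n+1)\le n/4+2$, contradicting $S_j(n+1)=M(n+1)>3(n+1)/4$ as soon as $n$ is large. Thus there are a random index $i_0$ and a random time $N_0$ with $\ell(n)=i_0$, and hence $M(n)=S_{i_0}(n)$, for all $n\ge N_0$. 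But then $M(n)/n=S_{i_0}(n)/n\to 0$ by part~(ii), contradicting $M(n)/n\to1$. Therefore $M(n)/n$ does not converge to one, almost surely.

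The one point requiring care is the handling of ties in the definition of the largest table, and this is exactly what the first step addresses: the hypothesis $M(n)/n\to1$ is by itself strong enough, through the elementary pigeonhole bound $\sum_iS_i(n)=n+1$, to force the leading table to be unique for all large $n$, after which the fact that leadership can change by at most one customer per step makes the transition analysis transparent. Alternatively, once $\ell(n)$ is seen to be eventually constant one may invoke part~(iii) rather than part~(ii): an eventually constant unique leader would have a fixed first-occupancy time, contradicting $B_n\to\infty$. With some extra bookkeeping at the transition times the same circle of ideas in fact yields the stronger statement $\liminf_{n}M(n)/n\le 1/2$ almost surely, but only the weaker statement is needed here.
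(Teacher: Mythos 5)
Your proof of part (iv) is correct, and it takes a genuinely different route from the paper's. The paper fixes the leading table $i_N$ at a single large time $N$, notes that $f(n)=S_{i_N}(n)/n$ starts above $3/4$ and tends to zero by the earlier parts, and uses the one-customer-per-step bound $|f(m+1)-f(m)|\le 1/N$ as a discrete intermediate value theorem to find a time $M$ with $f(M)\approx 1/2$; at that time neither $i_N$ nor any other table can hold more than roughly half the customers, contradicting the assumption. You instead show that the hypothesis forces the \emph{identity} of the leader to stabilise: uniqueness of the maximum follows from the pigeonhole bound $\sum_i S_i(n)=n+1$ once $M(n)>3n/4$, and a change of leader is impossible because the incoming leader would have to jump from fewer than $n/4+1$ customers to more than $3(n+1)/4$ in a single step. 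An eventually constant leader then contradicts part (ii) directly (your handling of the random index $i_0$ via the countable union of null sets is the right way to do this), or alternatively part (iii) as you note. Both arguments are elementary and rest on the earlier parts of the proposition; yours is arguably more transparent about \emph{why} the statement holds (a fixed winner cannot persist), while the paper's intermediate-value argument has the side benefit of exhibiting explicit times at which the leading proportion is near $1/2$, which is closer in spirit to the quantitative $\liminf$ bound you mention at the end.
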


\subsection{Main results}\label{sub:main_results}

Here we briefly summarise our main results, postponing precise
formulations of our assumptions to the next section.  Our first result
is a `one-table-theorem' and states that, in probability, the largest
table `takes it all'.  It holds under Assumption~\ref{H}, stated
below, which essentially says that the weights $W_i$ belong to the
maximum domain of attraction of an extreme value distribution
(Weibull, Gumbel or Fr\'echet):

\begin{theorem}\label{thm:A}
Assume that the distribution $\mu$ of the weights $W_i$ satisfies
Assumption~\ref{H}, stated in Section~\ref{sec:H} below.  Then
\[
\max_{i\geq 1} \frac{S_i(n)}{n} \to 1,
\qquad \mbox{in probability as } n\to\oo.
\]
\end{theorem}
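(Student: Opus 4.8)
The plan is to work with the continuous‑time embedding recalled in the introduction and to carry out there an extreme‑value and fluctuation analysis, following the strategy developed for the Bianconi--Barab\'asi model in~\cite{DMM, MMS}. Recall that in the embedding new tables appear at the jump times $\tau_i$ of a rate‑$\theta$ Poisson process, that the weights $W_i$ are i.i.d.\ with law $\mu$, and that, given $(\tau_i,W_i)_{i\ge1}$, the occupancy $S_i(t)$ of table $i$ evolves as an independent Yule process of parameter $W_i$ started from $1$ at time $\tau_i$. Writing $N(t)=\sum_i S_i(t)$ for the total number of customers, the discrete chain is recovered by observing the continuous‑time process at the successive jump times of $N$, so, since $N(t)\to\oo$, it is enough to prove that $\max_i S_i(t)/N(t)\to1$ in probability as $t\to\oo$. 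I would then use repeatedly the classical fact that the martingale limit $\xi_i:=\lim_{t\to\oo}\e{-W_i(t-\tau_i)}S_i(t)$ exists almost surely, is $\mathrm{Exp}(1)$‑distributed, and that the $\xi_i$ are independent of each other and of $(\tau_i,W_i)_{i\ge1}$; table by table, $S_i(t)=\xi_i\,\e{W_i(t-\tau_i)}(1+o(1))$ as $t\to\oo$.

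The second step is an extreme‑value analysis of the ``fitness functions'' $F_i(t):=W_i(t-\tau_i)$. Using $\tau_i\sim i/\theta$ together with the maximum‑domain‑of‑attraction Assumption~\ref{H}, one identifies the order of magnitude of $g(t):=\max_{i\le K_t}F_i(t)$ --- heuristically $g(t)$ is of order $t\,b_{\theta t}$, where $b_m$ is a typical maximum of $m$ weights, which grows sublinearly, linearly or superlinearly in $t$ according to whether $\mu$ belongs to the Weibull, Gumbel or Fr\'echet class --- and one pins down the (essentially unique) maximiser $i^\ast=i^\ast(t)$: its weight $W_{i^\ast}$ is a near‑record among $W_1,\dots,W_{K_t}$ and its birth time $\tau_{i^\ast}$ is a macroscopic fraction of $t$, both with high probability. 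This step also delivers the announced estimates on the weight and first‑occupancy time of the leading table. The output I want from it is the dichotomy that, \emph{in probability}, the leader's fitness separates from that of every competitor, $g(t)-\max_{i\ne i^\ast}F_i(t)\to\oo$.

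The third step, which is the heart of the argument, upgrades this separation of fitnesses into domination of the population, $\sum_{i\ne i^\ast}S_i(t)=o\bigl(S_{i^\ast}(t)\bigr)$ in probability. I would split the non‑leading tables into \emph{followers}, those with weight at most $W_{i^\ast}$, and \emph{pretenders}, those with strictly larger weight. For a follower $i$ the curve $F_i(\cdot)$ is eventually overtaken by $F_{i^\ast}(\cdot)$, after which $S_i(t)/S_{i^\ast}(t)$ decays like $\e{-(W_{i^\ast}-W_i)(t-\sigma)}$ for the crossing time $\sigma$; summing, the followers' relative mass is at most of order $\e{-\kappa(t-t_0)}$, where $t_0$ is the most recent leader‑change time and $\kappa>0$ is controlled by gaps between the top order statistics of the weights, plus a negligible term from very recently born tables. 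The pretenders are few, since their weights exceed a near‑record, and each is still strictly behind the leader in fitness by an amount that is large unless the next leader change is imminent. Now the successive leader‑change times are sparse --- the leaders have strictly increasing weights and the times are spread out on the logarithmic scale --- so for a deterministic $t\to\oo$, with probability tending to one the last change occurred a macroscopic time $t-t_0=\Theta(t)$ in the past \emph{and} the next one lies a macroscopic time in the future; on this event both the follower and the pretender contributions are $o(1)$. This is also precisely where the almost sure statement must break down: immediately before a leader change a pretender is comparable to the leader, and immediately after one the displaced former leader still carries comparable mass, so although such times have vanishing frequency they recur infinitely often, and the convergence cannot be strengthened (cf.\ Proposition~\ref{rem:basic}(iv)).

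I expect the main obstacle to lie in the third step, and more precisely in controlling the \emph{aggregate} of all non‑leading tables --- not merely the runner‑up --- uniformly across the three extreme‑value classes. Bounding $\sum_{i:\,F_i(t)\le g(t)-h(t)}\xi_i\e{F_i(t)}$ with a slowly growing truncation level $h(t)$ requires a first‑moment estimate on $\sum_i\e{W_i(t-\tau_i)}$ compared against $\e{g(t)}$, and this comparison is delicate in the Fr\'echet case, where neither $g(t)$ nor the extreme order statistics of the weights concentrate, so the estimates must be phrased in terms of high‑probability upper and lower bounds of the correct order rather than sharp asymptotics. A secondary, more routine point is the transfer of convergence in probability through the random time‑change between the continuous‑time process and the discrete chain, together with the passage from statements averaged over time windows to ones valid along every deterministic sequence $t\to\oo$.
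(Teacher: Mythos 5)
The overall philosophy of your plan (continuous-time embedding, then an extreme-value analysis of the exponents $W_i(t-\tau_i)$) matches the paper's, and the reduction from discrete to continuous time via the arrival times is exactly what the paper does. However, the quantitative extreme-value analysis in your second step is wrong in two of the three classes. The largest table at time $t$ is \emph{not} born at a macroscopic fraction of $t$ except in the Fr\'echet case: there is a trade-off between being born early (more time to grow) and having a large weight (which requires many tables to have already appeared), and the paper resolves it by introducing $u_t$ as the solution of $tB(u_t)=u_tA(u_t)$ (Lemma~\ref{lem:ut}); the leaders are born at times $\Theta(u_t)$, with $u_t=t^{\alpha/(\alpha+1)}L_0(t)=o(t)$ in the Weibull case and $u_t=tL_1(t)$, $L_1(t)\to0$, in the Gumbel case. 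Consequently the fluctuation scale of the top exponents is $tw_t$ with $w_t=B(u_t)$, not $t\,B(\theta t)$ as your heuristic ``$g(t)\asymp t\,b_{\theta t}$'' suggests. Since it is precisely this fluctuation scale that governs the separation of the leader from its competitors, this misidentification propagates into everything that follows.

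The genuine gap is in your third step, and it is one you half-acknowledge without closing. The follower/pretender decomposition rests on two unproved claims: (i) that with probability tending to one the last leader-change occurred $\Theta(t)$ in the past \emph{and} the next lies $\Theta(t)$ in the future --- this is essentially a weak ageing statement, which the authors explicitly list as an open problem, and nothing in your sketch establishes it; and (ii) that the aggregate of the $\sim\theta t$ followers decays like $\mathrm{e}^{-\kappa(t-t_0)}$ for a uniform $\kappa>0$ ``controlled by gaps between the top order statistics'' --- but many followers have weight within $o(w_t)$ of the leader's and crossing times close to $t$, so no uniform rate is available without substantial further work. The paper avoids all of this: it proves a point-process convergence (Theorem~\ref{thm:cv_gen}) for the normalized triples $\big(\tau_n/u_t,(W_n-v_t)/w_t,(\log Z_n(t)-tv_t)/(tw_t)\big)$, deduces that the top two normalized log-sizes converge jointly to distinct finite limits $W^{(1)}>W^{(2)}$, and then uses the crude bound $\sum_i Z_i(t)-Z^{(1)}(t)\le M(t)\,Z^{(2)}(t)$. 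This suffices because $Z^{(2)}(t)/Z^{(1)}(t)=\exp\big((W^{(2)}(t)-W^{(1)}(t))\,tw_t\big)$ decays superpolynomially while $M(t)$ is only $\mathrm{Poisson}(\theta t)$ and $\log t=o(tw_t)$. In other words, the ``main obstacle'' you identify --- controlling the aggregate rather than just the runner-up --- dissolves once one notes that multiplying the runner-up by the number of tables costs only a factor $\mathrm{e}^{O(\log t)}$, which is negligible on the scale $\mathrm{e}^{tw_t}$; no leader-change analysis, no first-moment estimate on $\sum_i\mathrm{e}^{W_i(t-\tau_i)}$, and no case distinction between the classes is needed at that stage.
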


Recall that the convergence of Theorem~\ref{thm:A} does not hold almost surely. Our second main 
result states that there are never more than two tables of macroscopic
size.  For this result we need a strengthened version of our basic
Assumption~\ref{H}.

\begin{theorem}\label{thm:B}
Assume that the distribution $\mu$ of the weights $W_i$ satisfies
Assumption~\ref{W},~\ref{F}  or~\ref{G}, 
stated in Section~\ref{sec:H} below.  
Let $S^{(1)}(n)$ and  $S^{(2)}(n)$ denote the occupancy of the
largest two tables when
there are $n$ customers in the restaurant.
Then
\[
\frac{S^{(1)}(n)+S^{(2)}(n)}{n} \to 1,
\qquad \mbox{almost surely as } n\to\oo.
\]
\end{theorem}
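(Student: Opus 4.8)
The plan is to pass to the continuous-time embedding described in the introduction: tables are born at the jump times of a rate-$\theta$ Poisson process, and conditionally on the weights $(W_i)$ the $i$-th table grows as an independent Yule process of rate $W_i$ started at its birth. Writing $\tau_i$ for the birth time of table $i$ and $Z_i(t)$ for its size at time $t$, one has $e^{-W_i(t-\tau_i)}Z_i(t)\to M_i$ almost surely for a positive random variable $M_i$, and the continuous-time process observed at the $n$-th jump of the total population is exactly the discrete weighted Chinese restaurant process. The quantity $(S^{(1)}(n)+S^{(2)}(n))/n$ thus has the same law as $(Z^{(1)}(t)+Z^{(2)}(t))/Z(t)$ evaluated at the relevant stopping times, where $Z(t)=\sum_i Z_i(t)$ and $Z^{(1)},Z^{(2)}$ are the two largest of the $Z_i$. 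It therefore suffices to show $(Z^{(1)}(t)+Z^{(2)}(t))/Z(t)\to1$ almost surely as $t\to\infty$; since $Z(t)\to\infty$ and the discrete time grid is obtained by a random time change that does not distort ratios, the two statements are equivalent.

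The core of the argument is then a careful comparison of the growth rates $W_i(t-\tau_i)$ across all tables simultaneously, which is where the extreme-value assumptions (Assumptions~\ref{W}, \ref{F}, \ref{G}) enter. By Proposition~\ref{rem:basic}(i) the number of tables born by time $t$ is of order $e^{\text{const}\cdot t}$ or smaller (indeed $K_n\sim(\theta/\mathrm{essup}\,\mu)\log n$, so in continuous time $K$ grows linearly in $t$ when the support is bounded, and sublinearly when it is unbounded), so the birth times $\tau_i$ are essentially evenly spread. The leading table at time $t$ is the one maximising $W_i(t-\tau_i)+\log M_i$, and the gap to the runner-up, and then to the third table, must be shown to tend to $+\infty$ along almost every trajectory except on a vanishingly thin set of transition times. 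Concretely, I would fix a large time $t$, let $j_1=j_1(t)$ and $j_2=j_2(t)$ be the indices of the two tables with the largest values of the exponent, and show that with probability one, for all large $t$,
\[
\bigl(W_{j_3(t)}(t-\tau_{j_3(t)}) - W_{j_2(t)}(t-\tau_{j_2(t)})\bigr) \to -\infty,
\]
so that the third-largest Yule process is exponentially smaller than the second, while the total $Z(t)$ is dominated (up to bounded ratio) by $Z_{j_1}(t)+Z_{j_2}(t)$. The extreme-value tail condition on $\mu$ is exactly what controls the spacing of the top order statistics of the weights $W_i$ among the $\approx e^{ct}$ tables in play, via a Poisson-process approximation of the point process of rescaled weights (different normalisations in the Weibull, Gumbel and Fréchet cases, handled by the three separate assumptions).

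I expect the main obstacle to be the \emph{uniformity in $t$} of these estimates: one needs a Borel--Cantelli argument along a suitable deterministic sequence of times $t_k$ (e.g.\ $t_k=k$ or $t_k=e^k$ depending on the regime) together with an interpolation step showing nothing bad happens between consecutive $t_k$, and this must be combined with control of the $M_i$ martingale limits (which have, e.g., exponential tails in the equal-rate case but whose tails now depend on $W_i$ and must be truncated). A second delicate point is that the event ``$Z^{(3)}(t)$ is not negligible'' genuinely does occur at a sparse sequence of transition times --- this is the content of Theorem~\ref{thm:A} failing almost surely --- so the proof cannot simply bound $Z^{(3)}/Z$; it must show that whenever the third table is comparable to the second, the second is comparable to the \emph{first} (a new table overtaking the old leader, with the previous runner-up far behind), so that $Z^{(1)}+Z^{(2)}$ still captures almost all the mass. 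Quantifying this ``at most two tables can be simultaneously large'' dichotomy --- that the top of the exponent process $W_i(t-\tau_i)+\log M_i$ is, uniformly in $t$, separated from the rest by a margin growing to infinity as soon as one looks past the second place --- is the technical heart of the theorem, and is presumably where Assumptions~\ref{W}, \ref{F} and~\ref{G} are tailored to give exactly the quantitative spacing needed.
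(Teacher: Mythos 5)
Your overall architecture does match the paper's: embed into continuous time, work with the exponents $\Theta_n(t)=W_n(t-\tau_n)$, prove a quantitative separation of their order statistics along a deterministic sequence $t_k$ via Borel--Cantelli, interpolate between consecutive $t_k$, control late-born tables and the Yule fluctuations separately, and convert back from exponent-order to size-order. But your displayed key claim is the wrong one, and it is false: you propose to show that $\Theta^{(3)}(t)-\Theta^{(2)}(t)\to-\infty$, i.e.\ that the \emph{second} and \emph{third} largest exponents separate. They do not. Each exponent is a continuous, affine, increasing function of $t$; every future leader enters with exponent $0$ at its birth and must climb through third place and cross the current runner-up before it can cross the leader, and at each such crossing $\Theta^{(2)}(t)=\Theta^{(3)}(t)$. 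Since the leader changes infinitely often (Proposition~\ref{rem:basic}(iii)), these crossings occur at arbitrarily large times, so $\liminf_{t\to\infty}\bigl(\Theta^{(2)}(t)-\Theta^{(3)}(t)\bigr)=0$. The gap that can, and must, be shown to diverge is $\Theta^{(1)}(t)-\Theta^{(3)}(t)$: for it to be smaller than $\lambda_t t w_t$, \emph{two} points of the (asymptotically Poissonian) point process of rescaled exponents must lie within $\lambda_t$ of the maximum, an event of probability $O(t^{\gamma}\lambda_t^2)$ by a Mecke-formula second-moment computation (Lemma~\ref{lem:two-tables-bound}); that square is precisely what makes the probabilities summable along $t_k=k^{\eta}$, and hence what makes a two-table theorem true while the one-table statement fails almost surely. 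Your closing paragraph gestures at the correct dichotomy (``separated from the rest as soon as one looks past second place''), but the statement you actually propose to prove must be replaced by this first-versus-third separation, with an explicit rate $\lambda_{t_k}t_kw_{t_k}$ large enough to absorb both the prefactor $M(t)\le 2\theta t$ and the Yule-fluctuation error $t_k^{1-\varphi}$ from Lemma~\ref{lem:Peter}.

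A second concrete error: you assert there are ``$\approx e^{ct}$ tables in play'' at time $t$, with the birth process growing linearly or sublinearly according to whether the support of $\mu$ is bounded. In the continuous-time embedding tables are born at the jumps of a rate-$\theta$ Poisson process, so $M(t)\sim\theta t$ almost surely regardless of $\mu$; it is the number of \emph{customers} $N(t)$ that grows exponentially, and $K_n\sim(\theta/\mathrm{essup}\,\mu)\log n$ is merely the discrete-time shadow of $M(t)\sim \theta t$. This matters because the entire extreme-value calculus ($u_t,v_t,w_t$ and the competition between weight and age encoded in the sets $A_t(x)$) is calibrated to a \emph{linear} number of i.i.d.\ weights; with $e^{ct}$ weights the normalisation, and hence the claimed location and fitness of the leading tables, would come out wrong. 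Finally, note that the exponent-ordering and the size-ordering need not agree at any given time, so even after establishing $M(t)Z_{m_j}(t)/Z_{m_1}(t)\to0$ for all $j\ge3$ one still needs the deterministic exchange argument of Section~\ref{sub:proof_main} to conclude for $Z^{(1)}+Z^{(2)}$.
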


Technically, it is more convenient to prove our main results for a
continuous-time version of our process and then transfer them to the
discrete-time process. We thus give the proofs of Theorems~\ref{thm:A} and~\ref{thm:B} at the end of Section~\ref{sec:CT}, in which we introduce the embedding of our process into continuous time and state their continuous-time analogues.

\section{The process in continuous time}  \label{sec:CT}

The disordered Chinese restaurant process is defined in the
introduction as a discrete time process. It can also be embedded into
continuous time and this embedding is a major technical tool for
us.\medskip

We first sample and fix a sequence $(W_i)_{i\geq 1}$ of 
i.i.d.\ random variables of distribution $\mu$, where $W_i$
constitutes the weight of table number~$i$. At time $t=0$, there is
one customer in the restaurant, sitting at table
number~1. Intuitively, given the weights $(W_i)_{i\geq 1}$, 
each customer sitting at table $i$ carries an exponential clock of
parameter~$W_i$, 
and when one of these clocks rings, a new customer enters the
restaurant and sits at the $i$-th table.  
In addition, customers enter the restaurant and open new tables at rate $\theta$. 
All exponential clocks are independent of each other.%
\medskip

More formally, we define $Z_i(t)$, the size of the $i$-th table at time $t$ in terms of an independent  {Yule process} $(Y_i(t))_{t\geq 0}$, where we recall that 
a \emph{Yule process} of parameter $\beta>0$ is a continuous-time branching process
where each individual is immortal and gives birth to one more individual at rate
$\beta$, independently of each other.  Writing
 $(Y_i)_{i\geq 1}$ for a sequence of i.i.d.\ Yule processes of
 parameter~one,  independent also of $(W_i)_{i\geq 1}$, we define
\be\label{eq:yule}
Z_i(t) = Y_i(W_i(t-\tau_i))\bs 1_{t\geq \tau_i},
\ee
where 
$\tau_0 = 0$ and the $\tau_i$'s for $i\geq 1$ are the jump-times of an independent 
Poisson counting process of rate~$\theta$. 
%
To see that $(Z_i(t)\colon i\geq 1)_{t\geq 0}$ is a continuous time  embedding of the discrete time process $(S_i(n)\colon i\geq 1)_{n\geq 0}$, we denote
$(\mathscr F_t\colon t\geq 0)$ the filtration generated by
$(Z_i(t)\colon i\geq 1)_{t\geq 0}$. Given $\mathscr F_t$ the next
change of the random vector $(Z_i(t)\colon i\geq 1)$ is either the
establishment of a new table if an exponential clock of
parameter~$\theta$ rings before the exponential clocks attached to the
customers already present ring, and this happens with probability
proportional to $\theta$, or the next customer joins an existing
customer at their table if her clock rings first, which happens with a
probability proportional to their table's fitness. 

The major advantage of the embedding comes from the fact that, by
elementary properties of the Yule process {\cec (see, e.g., \cite[Chapter~III]{AthreyaNey})}, 
{\cec there exists a sequence $(\zeta_i)_{i\geq 1}$ of i.i.d.\ random variables of exponential distribution of parameter~1 such that, for any fixed $i\geq 1$, $\mathrm e^{-t}Y_i(t) \to \zeta_i$ almost surely as $t\uparrow\infty$. 
Therefore,}
\be\label{eq:cv_yule}
Z_i(t) \sim \zeta_i \exp(W_i(t-\tau_i))\quad\text{ almost surely as }t\uparrow\infty.
\ee 
Thus the relative table sizes are primarily determined by the relative
sizes of the `exponents' $W_i(t-\tau_i)$.  This intuition is central
to much of our analysis and will be made rigorous later.

\subsection{Notation and setting}
\label{sec:H}

Recall that $\mu$ denotes the distribution of table weights.  We
assume that $\mu$ belongs to the maximum domain of attraction of a
distribution~$\nu$ on $\mathbb R$, meaning that 
there are functions $(A(t))_{t\geq 0}$ and   $(B(t))_{t\geq 0}$  such that
\be\label{eq:maxdomain}
\frac{\max_{i=1..n} W_i -A(n)}{B(n)} \Rightarrow \nu, 
\text{ in distribution as }n\to\infty.
\ee 
In fact, we assume the following.
If $\mu$ has bounded support, we assume without loss of generality
that its essential supremum is 1 and we define $M=1$.  If the
support of $\mu$ is unbounded, we set $M=\infty$. 
Throughout this paper we will assume that $\mu$ is
{absolutely continuous}.
Then our standing assumption is:

\begin{assumption}\label{H} 
{\bf (first part).} There are two continuous functions $(A(t))_{t\geq 0}$,
  $(B(t))_{t\geq 0}$ and a probability distribution $\nu$ 
on $\mathbb R$ such that, for all $x\in\mathbb R$, 
\be\label{eq:maxdomain-2}
t\mu\big((A(t)+xB(t),M) \big) \to -\log \nu(-\infty,x)=:\Phi(x), 
\quad\text{ as }t\uparrow\infty.
\ee
Also, $\Phi$ is non-increasing, $A$ is either constant or increasing, 
and either $A(t)= 0$ for all $t\geq0$, or
$A(t)/B(t)$ is non-decreasing and tends to infinity as
$t\uparrow\infty$. 
\end{assumption}

By classical extreme value theory, see for example 
\cite[Section 8.13]{BGT},  the stated properties of $A$ and $B$ hold without loss of
generality. Also 
$\nu$ is either a Weibull, a Gumbel, or a Fr\'echet distribution,
and we can choose $B$ non-negative  and 
$\Phi$ as in Table~\ref{cheatsheet}.
Also, we can choose 
$A = 1$ in the Weibull case, $A$ bounded from zero, increasing,
and converging to $M$ in the Gumbel case, 
and $A= 0$ in the Fr\'echet case.
Note that the Weibull case occurs only if $M=1$
and the Fr\'echet case only if  $M=\infty$, while  in the Gumbel
case we can have either $M=1$ or $M=\infty$.
\medskip

In the Weibull and Gumbel cases, to control the size of high-weighted tables that are created late in the process, we need the convergence of~\eqref{eq:maxdomain-2} to also hold in $L^1$.
This holds 
{\cec if the sequence of functions} $(u\mapsto n\mu\big((A(n)+uB(n),M))_{n\geq 1}$ is uniformly integrable, 
which is the case in all explicit examples we have considered, see also Appendix~B.
\addtocounter{theorem}{-1}
\begin{assumption} {\bf (continued)}.
If $\Phi$ is either the Weibull or the Gumbel distribution, then,
for all $x>0$, 
\be\label{eq:maxdomain-3}
\int_x^{\infty} t\mu\big((A(t)+uB(t),M) \big) \mathrm du \to \int_x^\infty \Phi(u)\mathrm d u, 
\quad\text{ as }t\uparrow\infty.
\ee
\end{assumption}
\smallskip

Further, in the  Weibull and Gumbel
cases, we define $u_t$ as the solution of
\be \label{eq:cond_u}
t B(u_t) = u_t A(u_t),
\ee
and we set $v_t = A(u_t), w_t = B(u_t)$. 
The existence of such $u_t$ is proved in Lemma~\ref{lem:ut} below. 
In particular, we have
\be
u_t v_t = tw_t.
\ee
In the Fr\'echet case, we set $u_t = t$, 
$v_t = 0$, and $w_t = B(t)$.
The motivation for these definitions is as follows:
\begin{itemize}[leftmargin=*]
\item The largest tables at time $t$ were 
created at times of order~$u_t$.
\item The weights of the largest tables
at time $t$ are of order $v_t + \Theta(w_t)$.
\end{itemize}
{\cec (See Theorem~\ref{thm:cv_gen} for a rigorous statement.)}

\begin{table}
\begin{center}
\begin{tabular}{|l|l|l|l|}
\hline\hline
&&& \\
{\bf Weibull} 
& $\Phi(x) = |x|^\alpha {\bf 1}_{x<0}$ 
& $u_t = t^{\frac\alpha{\alpha+1}} L_0(t)$
& $L_0(t)\to0$\\
&& $v_t=1$ &\\
&& $w_t = t^{-\frac1{\alpha+1}} L_0(t)$ &\\
&&&  \\
\hline
&&& \\
{\bf Gumbel} & $\Phi(x) = \e{-x}$ & $u_t = t L_1(t)$
& $L_1(t)\to0$ \\
&& $v_t = L_2(t)$ & $L_2(t)\to M$\\ 
&& $w_t = L_1(t)L_2(t)$ & \\
& && \\
\hline
&&& \\
{\bf Fr\'echet} & $\Phi(x) = \infty {\bf 1}_{x\leq 0} + x^{-\alpha}{\bf 1}_{x>0}$ 
& $u_t = t$ &\\
&& $v_t=0$ &\\
&& $w_t = t^{\frac1\alpha} L_3(t)$ & \\
&&& \\
\hline\hline
\end{tabular}\\
\end{center}
\caption{The functions $\Phi$, $u_t$, $v_t$ and $w_t$ for the three
  possible distributions $\nu$.
Here $\alpha>0$ and $L_0(t),\dots,L_3(t)$ denote slowly varying functions.} 
\label{cheatsheet}
\end{table}
\smallskip

Recall that a function $L(t)$ is 
called \emph{slowly varying} as $t\to\oo$
if $L(ct)/L(t)\to1$ as $t\to\oo$ for any fixed $c>0$.
A function $f(t)$ is called \emph{regularly varying of index $\beta$}
if $f(t)=t^\beta L(t)$ for some slowly varying $L(t)$.

\begin{lemma}\label{lem:ut}
Under Assumption~\ref{H}, in the Weibull and Gumbel cases, for all~$t$
large enough, 
Equation~\eqref{eq:cond_u} has a unique solution $u_t$. 
Furthermore, $u_t$ is non-decreasing in a neighbourhood of infinity,
$u_t\to\infty$, and $u_t = o(t)$ as $t\to\infty$. 
Further, 
\begin{itemize}[leftmargin=*]
\item[(i)] in the Weibull case, $v_t= 1$, and $(u_t)$ is regularly varying with index $\frac\alpha{\alpha+1}$ and  $(w_t)$ is regularly varying with index \smash{$\frac{-1}{\alpha+1}$};
\item[(ii)] in the Gumbel case $(u_t)$ is regularly varying of index 1, while 
$(v_t)$ and $(w_t)$ are slowly varying. Moreover,  $(v_t)$ is bounded
from zero for large enough $t$; 
\item[(iii)]  in the Fr\'echet case, $(w_t)$ is regularly varying with index $\frac1\alpha$.
\end{itemize}
\end{lemma}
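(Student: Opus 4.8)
\emph{Reformulation and the elementary statements.} I would treat the Weibull and Gumbel cases simultaneously. Set $\phi(u):=uA(u)/B(u)$ for $u$ in a neighbourhood of infinity. Assumption~\ref{H} provides that $A$ and $B$ are continuous and positive there, and that $u\mapsto A(u)/B(u)$ is non-decreasing and tends to infinity. Consequently $\phi$ is continuous and, being the product of the strictly increasing map $u\mapsto u$ with a positive non-decreasing map, it is strictly increasing on some interval $[u_0,\infty)$, with $\phi(u)\to\infty$. Since \eqref{eq:cond_u} is precisely the equation $\phi(u_t)=t$, it follows that for every $t\ge\phi(u_0)$ there is a unique solution $u_t\in[u_0,\infty)$, namely $u_t=\phi^{-1}(t)$; as $\phi^{-1}$ is continuous, strictly increasing and maps $[\phi(u_0),\infty)$ onto $[u_0,\infty)$, the map $t\mapsto u_t$ is non-decreasing near infinity and tends to infinity. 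Rearranging~\eqref{eq:cond_u} gives $u_t/t=B(u_t)/A(u_t)$; since $u_t\to\infty$ while $B(u)/A(u)\to 0$ as $u\to\infty$, this yields $u_t=o(t)$.

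\emph{The orders of growth.} It remains to identify the regular variation indices in (i)--(iii), for which I would feed in the classical description of the normalising constants (see e.g.\ \cite[Section~8.13]{BGT}): in the Fr\'echet case $B$ is regularly varying of index $1/\alpha$; in the Weibull case $B$ is regularly varying of index $-1/\alpha$; in the Gumbel case $A$ and $B$ can be, and by our choice are, taken slowly varying --- this is de Haan's theory of $\Pi$-variation, since $A$ is a $\Pi$-varying quantile function whose auxiliary function is comparable to $B$, so that $A$ is slowly varying, and the auxiliary function of a $\Pi$-varying function is itself slowly varying. Combining these with the standard inversion and composition properties of regularly varying functions (\cite[Section~1.5]{BGT}) --- the asymptotic inverse of a regularly varying function of index $\rho>0$ is regularly varying of index $1/\rho$, and composition multiplies indices --- I would argue as follows. \textbf{Fr\'echet:} here $w_t=B(t)$ is regularly varying of index $1/\alpha$. \textbf{Weibull:} $v_t=A(u_t)=1$; further $\phi(u)=u/B(u)$ is regularly varying of index $1+\tfrac1\alpha=\tfrac{\alpha+1}{\alpha}$, so $u_t=\phi^{-1}(t)$ is regularly varying of index $\tfrac{\alpha}{\alpha+1}$, and hence $w_t=B(u_t)$ is regularly varying of index $-\tfrac1\alpha\cdot\tfrac{\alpha}{\alpha+1}=-\tfrac1{\alpha+1}$. \textbf{Gumbel:} $A/B$ is a quotient of slowly varying functions, hence slowly varying, so $\phi(u)=u\cdot A(u)/B(u)$ is regularly varying of index $1$ and therefore $u_t=\phi^{-1}(t)$ is regularly varying of index $1$; since $u_t\to\infty$ and $A$, $B$ are slowly varying, the compositions $v_t=A(u_t)$ and $w_t=B(u_t)$ are slowly varying, and $v_t$ is bounded away from zero for large $t$ because $A$ is.

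\emph{Main obstacle.} The manipulations above are routine once the behaviour of the normalising functions is pinned down, so the substantive content lies in those classical inputs --- most notably the assertion that in the Gumbel case $A$ and $B$ may be taken slowly varying, which is exactly the point to verify case by case against the explicit weight distributions in Appendix~B. A minor technicality is the passage from the \emph{asymptotic} inverse produced by the Karamata inversion theorem to the genuine inverse $\phi^{-1}$; this is harmless since $\phi$ is honestly strictly increasing on a neighbourhood of infinity, so the two coincide there and the regular-variation index transfers directly.
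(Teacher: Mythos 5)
Your proposal is correct and follows essentially the same route as the paper: monotonicity and continuity of $u\mapsto uA(u)/B(u)$ give existence, uniqueness and the qualitative properties of $u_t$, and the indices in (i)--(iii) are obtained from the classical regular-variation facts about $A$ and $B$ (BGT, Section 8.13) combined with the Karamata inversion and composition rules. The only difference is presentational: in the Gumbel case you invoke $\Pi$-variation of the quantile function directly, whereas the paper cites Theorem 8.13.4 of BGT and its proof, but the substance is identical.
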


\begin{proof}
By Assumption~\ref{H}, $A(u)/B(u)$ is non-decreasing in $u$, which
implies that the function $f(u)= uA(u)/B(u)$ is increasing to infinity.
This and continuity imply that, for all large enough~$t$, 
\eqref{eq:cond_u} has a unique solution $u_t=f^{-1}(t)$. 
Also, $u_t=f^{-1}(t)\uparrow \infty$ as $t\to\infty$.
Finally, $tB(u_t) = u_tA(u_t)$ implies that
${u_t}/t = {B(u_t)}/{A(u_t)} \to 0$,
as $t\uparrow\infty$.
\smallskip

$(i)$ By \cite[Theorem~8.13.3]{BGT} $B$ is regularly varying with index $-1/\alpha$. 
Hence, by~\cite[Theorem~1.5.12]{BGT}, the function 
$(u_t)$ is regularly varying with index \smash{$\frac\alpha{\alpha+1}$}. And as $w_t=B(u_t)$ we get that $(w_t)$ is regularly varying with index 
\smash{$\frac{-1}{\alpha+1}$}.
\smallskip

\pagebreak[3]
$(ii)$  Note that, in the Gumbel case, the functions $A(t)$
and $B(t)$ are both slowly varying.
This can be deduced from~\cite[Theorem~8.13.4]{BGT}
and its proof as follows. Using their notation, 
with $H(x)=-\log \mathbb P(X>x)$, 
we have that $A(t)=H^{\leftarrow}((\log 
t)+1)-H^{\leftarrow}(\log t)$.   This is slowly varying by condition 
(iii) in~\cite[Theorem~8.13.4]{BGT}.   
In the proof of the same theorem it is 
verified that $B(t)=H^{\leftarrow}(\log t)$ is in the de Haan class and 
therefore also slowly varying. \smallskip

$(iii)$ See \cite[Theorem~8.13.2]{BGT}.\end{proof}

We introduce the function 
\be\label{eq:Phi_t-def}
\Phi_t(x):=u_t \mu(v_t+x w_t,M)
\qquad \mbox{(where $\Phi_t(x)=0$ if  $v_t+x w_t\geq M$).}
\ee
By Assumption~\ref{H}, we have that
$\Phi_t(x)\to \Phi(x)$ as $t\to\oo$ for any $x\in\mathbb R$.
Also note that $\Phi_t(x)$ is decreasing in $x$ for any fixed $t$.
\medskip

Theorem~\ref{thm:B} requires {different} assumptions on $\mu$ than
Assumption~\ref{H}.
In the Weibull case, Assumption~\ref{H} implies that 
$\mu(1-x,1) =x^\alpha\ell(x)$ for some function $\ell$ that is slowly varying
at zero and some $\alpha>0$, see, e.g., \cite{Resnick}. 
We introduce the following stronger
assumption on $\alpha$ in this case.

\begin{assumption}
\label{W}{\bf (Weibull)}
$\mu$ is supported on $(0,1)$ and
$\mu(1-x,1) = x^\alpha\ell(x)$ where $\ell$  is slowly
varying {\peter at zero} and $\alpha>1$. 
\end{assumption}

Analogously to the Weibull case, in the Fr\'echet case Assumption 
\ref{H} implies that $\mu(x,\infty)$ is a regularly varying
function, this time at infinity. 
In this case, we actually 
do not need a stronger assumption on the index of
variation.  

\begin{assumption}\label{F}{\bf (Fr\'echet)}
$\mu$ is supported on $(0,\infty)$ and 
$\mu(x,\infty)=x^{-\alpha}L(x)$ 
where $L(x)$ is slowly varying at infinity and $\alpha> 0$.
\end{assumption}

In the Gumbel case, {the assumption needed for the two-table theorem} is more complicated.
Recall the function $\Phi_t(x)$ 
in \eqref{eq:Phi_t-def} 
and that $\Phi(x)=\e{-x}$ in this case.


\begin{assumption}\label{G}{\bf (Gumbel)}
{In addition to~\eqref{eq:maxdomain-2}}, 
\begin{enumerate}[leftmargin=*,label=(\roman*)]
\item\label{G2}
 There exist $c_1, c_2>0$ such that  for all $t$ large enough,
\[\begin{cases}
\Phi_t(x)\geq \mathrm e^{-x - c_1 x^2  /\log t} &
\text{ for all }x\in (-c_2 \log t, c_2 \log t),\\
\Phi_t(x)\leq \mathrm e^{- x + c_1 x^2 / \log t} &
\text{ for all }x\in (-c_2\log t, \tfrac{M-v_t}{w_t}).
\end{cases}
\]
\item\label{G3}
The slowly varying function $L_1(t)=w_t/v_t=u_t/t$ satisfies 
$$L_1(t) \log \log t\to0 \text{ as $t\to\oo$.}$$
\end{enumerate}
\end{assumption}

We give examples of distributions $\mu$ satisfying the 
assumptions in Appendix~\ref{app:examples}.
\pagebreak[3]

\subsection{Results in continuous time}

We let $M(t)$ denote the number of non-empty tables at time~$t$.
Recall that $\tau_n$ are the times of creation of tables, and
$W_n$ their weights.
The key step to get a one-table theorem in probability is to show
the following point process convergence.  
Recall the concept of vague convergence of measures:
if $\gamma,\gamma_1,\gamma_2,\dotsc$ are measures on a complete
separable metric space $\mathcal S$, then $\gamma_n$ converge vaguely
to $\gamma$ if $\int f\,\mathrm d\gamma_n\to\int f\, \mathrm d\gamma$ for all
non-negative, continuous, compactly supported functions 
$f \colon\mathcal S\to\mathbb R$.  The topology of vague convergence makes 
the set of Radon measures $\gamma$ on $\mathcal S$  a {Polish space.}
Thus the standard theory of 
convergence in distribution  applies
to random variables with values in this space. 
Let $\mathrm{PPP}(\lambda)$ denote a Poisson point 
process with $\sigma$-finite intensity measure $\lambda$, which is
represented as a random variable taking values in the set of Radon
measures. \pagebreak[3]

\begin{theorem}\label{thm:cv_gen}
Let
\[\mathcal S:= \begin{cases}
[0,\infty]\times [-\infty, \infty] \times (-\infty, \infty], & \text{ in the Weibull and Gumbel cases,}\\ 
[0,1]\times [0, \infty] \times (-\infty, \infty], &\text{ in the Fr\'echet case.}
\end{cases}\]
Under Assumption~\ref{H}, 
the random variables
\be\label{eq:Gamma}
\Gamma_t := \sum_{n=1}^{M(t)} \delta
\Big(\frac{\tau_n}{u_t}, \frac{W_n-v_t}{w_t}, \frac{\log Z_n(t)-tv_t}{tw_t}\Big)
\ee
taking values in the space of Radon measures on $\mathcal S$ equipped with the vague topology, converge in distribution as $t\to\infty$, to 
$\Gamma_\infty:=\mathrm{PPP}(\mathrm d\zeta(s, y, z))$, where
\[\mathrm d\zeta(s, y, z):=
\begin{cases}
\theta\mathrm ds\otimes 
-\Phi'(y)\mathrm dy \otimes \delta_{y-s}(\mathrm dz)
&\text{ in the Weibull and Gumbel cases,}\\
\theta\mathrm ds\otimes 
-\Phi'(y)\mathrm dy \otimes \delta_{y(1-s)}(\mathrm dz)
&\text{ in the Fr\'echet case.}
\end{cases}\]
\end{theorem}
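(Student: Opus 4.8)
The plan is to prove Theorem~\ref{thm:cv_gen} by the standard method for establishing convergence of triangular arrays of points to a Poisson point process: verify convergence of intensity measures on a generating family of sets, plus an asymptotic-independence (Poissonisation) condition, in the form provided by, e.g., Kallenberg's theorem on convergence to a simple point process with independent increments. Concretely, since $\mathcal S$ is Polish and the limit $\Gamma_\infty$ is a Poisson process with a $\sigma$-finite, non-atomic intensity, it suffices to show that for every relatively compact box $R = [a_1,b_1]\times[a_2,b_2]\times[a_3,b_3]\subset\mathcal S$ with $\zeta(\partial R)=0$ one has $\mathbb{E}[\Gamma_t(R)] \to \zeta(R)$, and that $\mathbb{P}(\Gamma_t(R)=0)\to e^{-\zeta(R)}$ (and more generally the void probabilities over finite disjoint unions of such boxes factorise in the limit). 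The third coordinate is, conditionally on the first two and on the Yule processes, essentially a deterministic function of them via~\eqref{eq:cv_yule}, so the real content is a two-dimensional point-process statement about $(\tau_n/u_t, (W_n-v_t)/w_t)$ to which the $\delta$ in the third coordinate is slaved.

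The key steps, in order, are as follows. \textbf{Step 1 (Intensity of the first two coordinates).} The pairs $(\tau_n, W_n)$ form a marked point process: the $\tau_n$ are the points of a rate-$\theta$ Poisson process and the $W_n$ are i.i.d.\ $\mu$-marks, independent of the $\tau_n$. Hence $\sum_n \delta(\tau_n/u_t, (W_n-v_t)/w_t)$ has intensity $\theta\,\mathrm{d}s \otimes u_t\,\mu\big(v_t + y w_t \in \cdot\,\big)$ restricted to $\{\tau_n \le$ something$\}$; here the normalisation is chosen precisely so that $u_t\,\mu\big((v_t + y w_t, M)\big) = \Phi_t(y)$, and Assumption~\ref{H} (via~\eqref{eq:maxdomain-2} and Lemma~\ref{lem:ut}) gives $\Phi_t(y)\to\Phi(y)$, i.e.\ the marginal mark intensity converges to $-\Phi'(y)\,\mathrm{d}y$. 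One must also check that only $n$ with $\tau_n = O(u_t)$ can produce points of $\Gamma_t$ landing in a fixed compact set — this is where $M(t)\sim (\theta/M)\log t$ (or $M(t)\to\infty$ with $K_n/\log n\to 0$ in the unbounded case, Proposition~\ref{rem:basic}(i)) enters, together with the fact that a table created much later than $u_t$ cannot have grown large enough. \textbf{Step 2 (Third coordinate via the Yule asymptotics).} By~\eqref{eq:cv_yule}, $\log Z_n(t) = W_n(t-\tau_n) + \log\zeta_n + o(1)$ on the event $t\ge\tau_n$. Plugging in $W_n = v_t + y w_t$ and $\tau_n = s u_t$ and using $u_t v_t = t w_t$ one computes
\[
\frac{\log Z_n(t) - t v_t}{t w_t} = \frac{(v_t + y w_t)(t - s u_t) - t v_t}{t w_t} + o(1) = y - s\frac{u_t v_t}{t w_t} - s y\frac{u_t}{t} + o(1) \longrightarrow y - s
\]
in the Weibull and Gumbel cases (where $u_t/t = w_t/v_t \to 0$, so the $syu_t/t$ term vanishes), and $\to y(1-s)$ in the Fréchet case (where $u_t = t$, $v_t = 0$). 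The fluctuation term $\log\zeta_n/(tw_t)$ is $o(1)$ because $tw_t\to\infty$; one needs a uniform control of this error over the (growing but slowly) number of relevant $n$, which is a routine union bound using $\mathbb{P}(|\log\zeta_n| > \varepsilon t w_t) $ summed over $O(\log t)$ indices. This justifies replacing the third coordinate by the degenerate kernel $\delta_{y-s}$ (resp.\ $\delta_{y(1-s)}$), giving the claimed $\mathrm{d}\zeta$. \textbf{Step 3 (Poissonisation).} Because the $\tau_n$ are genuinely Poisson and the $W_n$ genuinely i.i.d.\ and independent of them, the only obstruction to $\Gamma_t$ being exactly a (thinned) Poisson process is the coupling through the Yule processes in the third coordinate; but conditionally on $((\tau_n, W_n))_n$ the processes $Y_n$ are independent, so the thinned/marked process is conditionally Poisson, and one concludes by checking that the conditional intensity over any box converges in probability (Step 1–2) and then applying dominated convergence to the void probabilities.

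The main obstacle I expect is \textbf{Step 1's truncation / tightness issue in the Weibull and Gumbel cases}, where $\mathcal S$ has the first coordinate running over all of $[0,\infty]$ and the mark coordinate over $[-\infty,\infty]$: one must rule out that tables created at times $\tau_n \gg u_t$, but with exceptionally large weights $W_n$ close to $M$, contribute points with third coordinate in a fixed compact set — equivalently, that the measure $\zeta$ really is Radon on $\mathcal S$ and that $\Gamma_t$ does not leak mass to the boundary. This is precisely the point at which the $L^1$-strengthening~\eqref{eq:maxdomain-3} of Assumption~\ref{H} is used: it upgrades the pointwise convergence $\Phi_t\to\Phi$ to control of $\int_x^\infty \Phi_t$, which bounds the expected number of high-weight tables uniformly in $t$ and forces the excess mass to vanish. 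I would handle this by fixing a large compact $C\subset\mathcal S$, writing $\mathbb{E}[\Gamma_t(C)]$ as a double integral over $(s,y)$ against $\theta\,\mathrm{d}s \otimes (-\mathrm{d}\Phi_t(y))$ with the third-coordinate constraint $y - s \in [a_3,b_3]$ (resp.\ $y(1-s)$), bounding the tail in $y$ by~\eqref{eq:maxdomain-3} and the tail in $s$ by the deterministic relation $s \le y - a_3$ forced by the $\delta$-kernel, and then passing to the limit by dominated convergence; the matching lower bound and the void-probability computation follow the same template. The Fréchet case is easier because $\mathcal S$ is compact in the first coordinate and $v_t=0$, so no $L^1$-strengthening is needed and Step~1 reduces to the classical Fréchet extreme-value convergence of $u_t\mu(yw_t,\infty)\to y^{-\alpha}$.
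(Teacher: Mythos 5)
Your proposal follows essentially the same route as the paper: Kallenberg's criterion for the two-dimensional process $(\tau_n/u_t,(W_n-v_t)/w_t)$, slaving the third coordinate to the first two via the identity $u_tv_t=tw_t$ and the Yule asymptotics $\log Z_n(t)=W_n(t-\tau_n)+\log\zeta_n+o(1)$, and handling the compactification of $\mathcal S$ by showing that late-created and low-weight tables have third coordinate escaping to $-\infty$, with the $L^1$-strengthening \eqref{eq:maxdomain-3} entering exactly where you place it. One small correction that does not affect the argument: in the continuous-time embedding $M(t)$ is Poisson with parameter $\theta t$, not of order $\log t$ (that is the discrete-time count $K_n$); the union bound over the fluctuations $\log\zeta_n$ works because the indices landing in a fixed compact box form a tight family, not because there are only $O(\log t)$ tables.
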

The proof of this theorem appears at the end of Section~\ref{sec:PPPcv}.
It shows that the largest tables at time~$t$
were created around time $\Theta(u_t)$, have fitness of order
$v_t+\Theta(w_t)$, and thus, their size at time~$t$ is of order
$\exp(tv_t+\Theta(tw_t))$.  
Indeed, the mass of all points with $\tau_n/u_t\to 0$ {(corresponding to ``older'' tables)}
concentrates asymptotically on the subset of $\mathcal S$ where the first coordinate is zero. 
As this set has no mass under the intensity measure of the limiting Poisson process, 
these points must leave every compact subset of~$\mathcal S$ 
and, {\cec because of the compactification of the intervals in  the
definition of~$\mathcal S$}, 
this can only happen by their third coordinate going to $-\infty$. 
Hence none of these points corresponds to the largest table. 
This argument, {\cec which is crucial in the proof,} also applies when 
$\tau_n/u_t\to \infty$, or $\frac{W_n-v_t}{w_t}$ goes to infinity, or to zero in the 
Fr\'{e}chet, or $-\infty$ in the Weibull or Gumbel case. 
\medskip

As promised,  the point process convergence of Theorem~\ref{thm:cv_gen} 
implies a one-table theorem in probability.
\begin{corollary}[One-table theorem]
\label{cor:one_table}
Let $N(t)$ denote the number of customers in the restaurant at time~$t$.
If Assumption~\ref{H} holds, then
\be\label{eq:onetab}
\max_{1\leq i \leq M(t)}\frac{Z_i(t)}{N(t)} \to 1, \text{ in probability when
}t\to\infty.
\ee
\end{corollary}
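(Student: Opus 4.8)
The plan is to deduce the corollary from the point-process convergence of Theorem~\ref{thm:cv_gen}. Since every customer sits at exactly one table, $N(t)=\sum_{i=1}^{M(t)}Z_i(t)$, so, writing $i^\ast$ for an index attaining $\max_{i}Z_i(t)$,
\[
\max_{1\le i\le M(t)}\frac{Z_i(t)}{N(t)}=\big(1+\Sigma_t\big)^{-1},
\qquad
\Sigma_t:=\sum_{i\ne i^\ast}\frac{Z_i(t)}{Z_{i^\ast}(t)},
\]
and it is enough to show $\Sigma_t\to0$ in probability. Writing $E_i:=(\log Z_i(t)-tv_t)/(tw_t)$ for the third coordinate of the $i$th atom of $\Gamma_t$ and $E_{(1)}\ge E_{(2)}\ge\dots$ for its decreasing rearrangement, we have $\Sigma_t=\sum_{i\ne i^\ast}\exp\!\big(-tw_t(E_{(1)}-E_i)\big)$. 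Two elementary facts will be used throughout. First, $tw_t/\log t\to\oo$: in the Weibull and Fr\'echet cases $tw_t$ is regularly varying of positive index by Table~\ref{cheatsheet}, while in the Gumbel case $tw_t=u_tv_t$ with $(u_t)$ regularly varying of index~$1$ and $(v_t)$ bounded from zero by Lemma~\ref{lem:ut}. Second, the table-creation times form a Poisson process of rate~$\theta$, so $M(t)/t\to\theta$ almost surely and hence $\log M(t)=O(\log t)=o(tw_t)$ almost surely.

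From the explicit form of the limiting intensity $\zeta$, one checks that for $c$ in a suitable half-line---any $c$ in the Gumbel case, $c<0$ in the Weibull case, $c>0$ in the Fr\'echet case---the mass $\zeta(\mathcal S\cap\{z>c\})$ is finite, it tends to $+\oo$ as $c$ approaches the excluded endpoint of the half-line, and $\zeta$ charges no mass to $\{z=c\}$. Hence, in $\Gamma_\infty$, the largest third coordinate $z_{(1)}$ is almost surely a finite real that is attained, and $z_{(1)}>z_{(2)}$ almost surely. Fix $\eps>0$, choose $c$ in this half-line with $\PP\big(\Gamma_\infty(\{z>c\})\ge2\big)\ge1-\eps$, and then $\delta>0$ with $\PP(z_{(1)}>c+\delta)\ge1-\eps$. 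The set $\{z>c\}$ is relatively compact in $\mathcal S$---this is where the compactification of the third coordinate at $+\oo$ in the definition of $\mathcal S$ is needed---and $\zeta$ does not charge its boundary, so Theorem~\ref{thm:cv_gen} and standard point-process arguments yield that the number $R_t:=\Gamma_t(\{z>c\})$ of atoms of $\Gamma_t$ with $E_i>c$, together with their ordered third coordinates, converges jointly in distribution to the corresponding quantities for $\Gamma_\infty$. In particular $R_t$ is tight with an almost surely finite limit, $E_{(1)}\Rightarrow z_{(1)}$, and $E_{(1)}-E_{(2)}\Rightarrow z_{(1)}-z_{(2)}>0$.

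It remains to split $\Sigma_t=\Sigma_t^{\mathrm{low}}+\Sigma_t^{\mathrm{high}}$ according to whether $E_i\le c$ or $E_i>c$. For the low part, bound crudely $\Sigma_t^{\mathrm{low}}\le M(t)\exp\!\big(-tw_t(E_{(1)}-c)\big)$; since on $\{E_{(1)}>c+\delta\}$ this is at most $M(t)e^{-tw_t\delta}$, and since $M(t)e^{-tw_t\delta}\to0$ almost surely by $\log M(t)=o(tw_t)$ while $\PP(E_{(1)}\le c+\delta)\to\PP(z_{(1)}\le c+\delta)\le\eps$, we get $\PP(\Sigma_t^{\mathrm{low}}>\lambda)\le2\eps$ for every $\lambda>0$ and all large $t$. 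For the high part, the sum is empty unless $R_t\ge2$, and then $\Sigma_t^{\mathrm{high}}\le R_t\exp\!\big(-tw_t(E_{(1)}-E_{(2)})\big)$; as $R_t$ is tight, $z_{(1)}-z_{(2)}>0$ almost surely and $tw_t\to\oo$, this tends to $0$ in probability. Combining the two bounds gives $\limsup_{t\to\oo}\PP(\Sigma_t>\lambda)\le2\eps$ for every $\lambda>0$, and letting $\eps\downarrow0$ finishes the proof. I expect the main obstacle to be precisely this passage from the vague convergence of $\Gamma_t$ to honest control of its two largest atoms: one has to rule out mass escaping towards $z=-\oo$---which is exactly what the quantitative bound $\log M(t)=o(tw_t)$ provides---and, in the other direction, mass entering from beyond the truncation level $c$, which is prevented by the compactification built into $\mathcal S$.
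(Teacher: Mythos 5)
Your proof is correct and follows essentially the same route as the paper's: both deduce from Theorem~\ref{thm:cv_gen} that the gap between the two largest normalised exponents $(\log Z^{(1)}(t)-\log Z^{(2)}(t))/(tw_t)$ converges in distribution to a strictly positive limit, and then kill the contribution of all remaining tables using $\log M(t)=O(\log t)=o(tw_t)$. The only difference is cosmetic: the paper bounds the whole sum over non-maximal tables in one step by $M(t)Z^{(2)}(t)$, which subsumes your low/high split at the truncation level~$c$.
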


\begin{proof}
Let $Z^{(1)}(t)$ and $Z^{(2)}(t)$ 
denote the sizes of the largest and second largest tables at time $t$.
Also let
\[
W^{(1)}(t) = \frac{\log Z^{(1)}(t)-tv_t}{tw_t}\quad \text{ and }\quad
W^{(2)}(t) = \frac{\log Z^{(2)}(t)-tv_t}{tw_t}.
\]
By Theorem~\ref{thm:cv_gen}, we have, for all $z_1, z_2>0$,
\ba\mathbb P\big(W^{(1)}(t)\geq z_1, W^{(2)}(t)\geq z_2\big)
&= \mathbb P\big(\Gamma_t(\widehat{\mathcal S}\times [z_1, \infty])\geq 1, \Gamma_t(\widehat{\mathcal S}\times [z_2, \infty])\geq 2\big)\\
&\to \mathbb P\big(\Gamma_\infty(\widehat{\mathcal S}\times [z_1, \infty])\geq 1, \Gamma_\infty(\widehat{\mathcal S}\times [z_2, \infty])\geq 2\big)
\ea
where we have set
\[\widehat{\mathcal S}
= \begin{cases}
[0,\infty]\times[-\infty, \infty]& \text{ in the Weibull and Gumbel cases,}\\
[0,1]\times[0,\infty]& \text{ in the Fr\'echet case.}
\end{cases}\]
This implies that, as $t\to\infty$, we have 
$(W^{(1)}(t), W^{(2)}(t)) \Rightarrow (W^{(1)}, W^{(2)}),$
where $W^{(1)}$ and $W^{(2)}$ are two almost-surely finite random variables
such that $W^{(1)}>W^{(2)}$ almost surely.
Clearly
\be\label{eq:N(t)_negl}
N(t) = \sum_{i=1}^{M(t)} Z_i(t) = Z^{(1)}(t) + \bigg(\sum_{i=1}^{M(t)}
Z_i(t)-Z^{(1)}(t)\bigg). 
\ee
Our aim is to show that the second term is negligible in front of $Z^{(1)}(t)$.
Almost surely for all $t\geq 0$,
\[
0\leq 
\sum_{i=1}^{M(t)} Z_i(t)-Z^{(1)}(t)\leq M(t) Z^{(2)}(t) 
= M(t) Z^{(1)}(t)\exp\big[\big(W^{(2)}(t)-W^{(1)}(t)\big)tw_t\big].
\]
Since $M(t)$ is Poisson-distributed with parameter $\theta t$, {\cec we have}
$W^{(2)}(t)-W^{(1)}(t)\Rightarrow W^{(2)}-W^{(1)}<0$, and $\log t = o(tw_t)$ , 
we indeed get that, in probability as $t\uparrow\infty$,
\[\sum_{i=1}^{M(t)} Z_i(t)-Z^{(1)}(t) = o\big(Z^{(1)}(t)\big),\]
which, by~\eqref{eq:N(t)_negl}, implies $N(t) = (1+o(1)) Z^{(1)}(t)$ and thus concludes the proof.
\end{proof}

From Corollary~\ref{cor:one_table} it is a small step to Theorem
\ref{thm:A}:
\begin{proof}[Proof of Theorem~\ref{thm:A}]
Writing $T_n$ for the time of arrival of the $n$-th customer, we have 
$S_i(n)=Z_i(T_n)$. 
Then $T_n\to\infty$ almost surely as $n\to\infty$ 
(indeed, $\{\sup_{n\geq 1}T_n<\infty\}$ is equivalent to $\{\exists t_\infty\colon N(t_\infty) = \infty\}$, which has probability zero), 
so that $\max_{i\geq 1} S_i(n)/n=\max_{1\leq i\leq M(T_n)} Z_i(T_n)/N(T_n)\to1$ in probability.
\end{proof}


The following
result states that, almost surely as $t\uparrow\infty$, no more than
two tables can have macroscopic sizes at time~$t$. 

\begin{theorem}\label{th:two_tables}
Assume that  Assumption~\ref{W}~{(Weibull)},
 Assumption~\ref{F}~{(Fr\'echet)} or  Assumption~\ref{G}~{(Gumbel)} hold.
 Denote by $Z^{(1)}(t)$ the size of the largest table at 
 time~$t$, and by $Z^{(2)}(t)$ the size of the second largest table at
time $t$.
 Then,
\[
\frac{Z^{(1)}(t)+Z^{(2)}(t)}{N(t)}\to 1,
\qquad \mbox{ almost surely as $t\to\infty$,}
\]
where $N(t)$ is the total number of 
customers in the restaurant at time~$t$.
\end{theorem}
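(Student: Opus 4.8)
The plan is to show that almost surely the total size $N(t)$ of all tables except the two largest is negligible compared to $N(t)$, which is asymptotically $Z^{(1)}(t)$ by the one-table argument. Fix a large time horizon and recall the asymptotics $Z_i(t)\sim\zeta_i\exp(W_i(t-\tau_i))$ from~\eqref{eq:cv_yule}. The key quantity to control is the ``exponent gap'' between the two leading tables and all the others. Concretely, set $E_i(t) = W_i(t-\tau_i)$ for tables already created, and let $E^{(1)}(t)\ge E^{(2)}(t)\ge E^{(3)}(t)\ge\cdots$ be the ordered exponents. Since $\sum_{i\ge 3}Z_i(t)\le M(t)\,Z^{(3)}(t)$ and $M(t)$ is Poissonian (hence at most $Ct$ eventually, a.s.), it suffices to prove that a.s.
\[
E^{(1)}(t) - E^{(3)}(t) \;\to\; \infty \quad\text{faster than $\log t$, as }t\to\infty,
\]
together with the routine fact (from the Yule asymptotics and a Borel--Cantelli bound on the i.i.d.\ $\zeta_i$) that the multiplicative factors $\zeta_i$ do not spoil this, i.e.\ $\log\zeta_i = O(\log\tau_i)$ uniformly. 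Transferring back from continuous to discrete time via $S_i(n)=Z_i(T_n)$, $T_n\to\infty$ a.s., then gives Theorem~\ref{thm:B}.

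The heart of the matter is therefore a statement about the extremal exponents $W_i(t-\tau_i)$. I would first identify the ``relevant window'': by Theorem~\ref{thm:cv_gen} the leading tables are created at times $\Theta(u_t)$ with weights $v_t+\Theta(w_t)$. Restricting attention to tables with $\tau_i\in[\eps u_t,\,\eps^{-1}u_t]$ and $W_i\in[v_t-C w_t,\,v_t+C w_t]$, the exponent is $tv_t + tw_t\cdot\xi_i + o(tw_t)$ where the $\xi_i$ are (asymptotically) an inhomogeneous PPP governed by $\theta\,\mathrm ds\otimes(-\Phi'(y))\,\mathrm dy$ evaluated on $\{z=y-s\}$ (or $\{z=y(1-s)\}$ in the Fr\'echet case). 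So after centering by $tv_t$ and scaling by $tw_t$, the ordered exponents converge to the ordered atoms of a PPP on $\mathbb R$ with some intensity $\rho(z)\,\mathrm dz$ that has a heavy enough left tail (the PPP has infinitely many atoms), so the gap between the top two atoms and the rest is a.s.\ positive but only $\Theta(1)$ on the $tw_t$-scale — i.e.\ $E^{(1)}(t)-E^{(3)}(t)=\Theta(tw_t)$ along typical $t$. Since $tw_t\gg\log t$ in all three regimes (in Gumbel this is exactly where Assumption~\ref{G}\ref{G3} and the $\log\log t$ condition enter, to handle the fluctuations of $u_t$, $v_t$, $w_t$ and the factor $M(t)$), this would suffice — \emph{if} one only needed the statement along a fixed sequence of times or in probability. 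The genuine difficulty is upgrading to an \emph{almost sure} statement for \emph{all} large $t$ simultaneously, because at the (rare) transition times the current champion is being overtaken and $E^{(1)}(t)-E^{(2)}(t)$ itself is small: one must ensure that at those very times the \emph{third} exponent still lags far behind both.

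To handle this I would run a chaining/union-bound argument over a mesh of times $t_k$ with $t_{k+1}=t_k(1+\delta_k)$, controlling the increments of each $E_i(\cdot)$ on $[t_k,t_{k+1}]$ (they increase at rate $W_i\le v_t+O(w_t\sqrt{\log k})$ on the relevant tables, using Assumptions~\ref{W}, \ref{F}, \ref{G} on the upper tail of $\mu$ to bound how many tables can have weight exceeding $v_t+x w_t$, via the upper bound $\Phi_t(x)\le \mathrm e^{-x+c_1x^2/\log t}$ in the Gumbel case and the regular-variation estimates in the Weibull/Fr\'echet cases). The strategy is: (a) on the event that the top two tables at time $t_k$ retain exponent lead $\ge L_k := K\log t_k$ over all others at time $t_{k+1}$, with $L_k/(\log t_k)\to\infty$ chosen suitably, the two-table property holds throughout $[t_k,t_{k+1}]$; (b) show $\sum_k\PP(\text{failure at step }k)<\infty$ by estimating, via the PPP comparison and the weight-tail bounds, the probability that some non-leading table's exponent comes within $L_k$ of the running maximum — here the stronger assumptions ($\alpha>1$ in Weibull, the quadratic-correction control in Gumbel) are exactly what make this sum summable rather than merely $o(1)$; (c) apply Borel--Cantelli. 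I expect step~(b) — quantifying how close the ``bronze medal'' table can get to the leaders uniformly over a growth interval, with summable error — to be the main obstacle, and the place where each of the three extreme-value cases must be treated with its own tail estimate; the Gumbel case in particular will require Assumption~\ref{G} in full, including part~\ref{G3} to control the slow variation of $v_t$ and $w_t$ over the interval $[t_k,t_{k+1}]$.
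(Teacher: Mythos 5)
Your overall architecture coincides with the paper's: pass to the exponents $\Theta_n(t)=W_n(t-\tau_n)$, show that the gap $\Theta^{(1)}(t)-\Theta^{(3)}(t)$ is large, discretise time along a polynomial mesh, sum failure probabilities and apply Borel--Cantelli, interpolate between mesh points by bounding the increments of the exponents via the maximal weight, and finally convert exponents back into table sizes (the paper does the last step via a uniform large-deviation bound for the Yule process, Lemma~\ref{lem:Peter}, combined with Lemma~\ref{lem:new-tables} showing the top exponents belong to old tables, and a short combinatorial argument matching exponent-ranks to size-ranks). So this is essentially the paper's route, and your identification of where Assumptions~\ref{W} and~\ref{G} enter is accurate.

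However, your step (b) has a genuine gap at the quantitative heart of the argument: you do not say how to make $\sum_k\PP\big(\Theta^{(1)}(t_k)-\Theta^{(3)}(t_k)\le L_k\big)$ converge, and the phrasing ``the probability that \emph{some} non-leading table's exponent comes within $L_k$ of the running maximum'' suggests a union bound over single tables. That gives a bound proportional to the measure of the thin shell $A_{t_k}(\xi-\lambda)\setminus A_{t_k}(\xi)$, i.e.\ linear in $\lambda=L_k/(t_kw_{t_k})$, which is \emph{not} summable in the admissible parameter range (a first-order bound would force $\kappa\eta>1$, incompatible with the interpolation constraint $\tfrac1\eta>\kappa+\tfrac1{1+\alpha}$ in the Weibull case; indeed it cannot be summable, since the analogous statement for $\Theta^{(1)}-\Theta^{(2)}$ is false -- the one-table theorem fails almost surely). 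The paper's key observation (Lemma~\ref{lem:two-tables-bound}, via Mecke's formula) is that the failure event forces \emph{two} points of the Poisson process $(\tau_n,W_n)$ into that thin shell below the maximum, which yields a bound of order $\lambda^2 t^{\gamma}$; the square is exactly what makes the sum converge under $2\kappa\eta>1$, and it is here -- not only in the tail assumptions -- that summability is won. A secondary issue: your threshold $L_k=K\log t_k$ (note ``$L_k/\log t_k\to\infty$'' contradicts this choice) is too small to absorb the error in replacing $\log Z_n(t)$ by $\Theta_n(t)$ uniformly over all tables and all $t\in[t_{k-1},t_k]$; the paper can only control this error down to a polynomial scale $t_k^{1-\varphi}$ (Proposition~\ref{prop:exponents-vs-tables}), which is why it works with the polynomial gap $\lambda_{t_k}t_kw_{t_k}=t_k^{-\kappa}t_kw_{t_k}$ rather than a logarithmic one. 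Since the true gap is of order $t w_t$, taking a polynomial threshold costs nothing and repairs this.
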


The proof of this theorem is given in Section~\ref{sec:twotabproof}.
We now show how to deduce Theorem~\ref{thm:B}.

\begin{proof}[Proof of Theorem~\ref{thm:B}]
As in the proof of Theorem~\ref{thm:A}, we let $T_n$ be the time of arrival of the $n$-th customer; we have $T_n\uparrow\infty$ almost surely as $n\uparrow\infty$, and $S_i(n) = Z_i(T_n)$, for all $n\geq 1$, $i\geq 1$.
Thus, by Theorem~\ref{th:two_tables},
\[\frac{S^{(1)}(n)+S^{(2)}(n)}{n}
=\frac{Z^{(1)}(T_n)+Z^{(2)}(T_n)}{N(T_n)}\to 1,
\]
almost surely as $n\uparrow\infty$.
\end{proof}

It remains to prove Theorems~\ref{thm:cv_gen} and~\ref{th:two_tables}; this is done in Sections~\ref{sec:PPPcv} and~\ref{sec:twotabproof} below, respectively.

\section{One-table result: Proof of 
Theorem~\ref{thm:cv_gen}
}\label{sec:PPPcv}

The proof  of Theorem~\ref{thm:cv_gen} 
is done in two steps.
Firstly, in Subsection~\ref{sub:local} 
we prove convergence of $\Gamma_t$ (see~\eqref{eq:Gamma})
on the space of measures on 
\begin{equation}\label{eq:defW}
\mathcal W :=
\begin{cases}
[0,\infty)\times (-\infty, \infty] \times [-\infty, \infty], & \text{in the Weibull and Gumbel cases,}\\
[0,1)\times (0, \infty] \times [-\infty, \infty], & \text{in the Fr\'echet case.}
\end{cases}
\end{equation}
Note that this differs from the claim of Theorem~\ref{thm:cv_gen},
where convergence is on the space of measures on the space 
$\mathcal S$ which differs from $\mathcal W$ at the endpoints of
several of the intervals.
Secondly, {\cec and this is the most difficult part of the proof}, 
in Subsection~\ref{sub:young_and_unfit}, 
we prove that young tables 
($\tau_i\gg u_t$) as well as unfit tables 
($W_i-v_t\ll w_t$)
are both too small to contribute to the limit.
This allows us to `close the brackets' in the first two coordinates
of \eqref{eq:defW};
in doing so however, the mass corresponding to tables that do not
contribute to the limit instead `escapes' to $-\infty$ in the third
coordinate.   We thereby transfer
the convergence on $\mathcal W$ to convergence on $\mathcal S$.

\subsection{Local convergence}\label{sub:local}

We prove the following convergence for the space $\mathcal W$.

\begin{lemma}\label{lem:local}
In distribution as  $t\to\infty$,
\[\Gamma_t 
\to \mathrm{PPP}(\mathrm d\zeta(s,y,z)),\]
where 
\[\mathrm d\zeta(s,y,z) = \begin{cases}
\theta\mathrm ds\otimes 
-\Phi'(y)\mathrm dy \otimes \delta_{y-s}(\mathrm dz),
&\text{ in the Weibull and Gumbel cases,}\\
\theta\mathrm ds\otimes 
-\Phi'(y)\mathrm dy \otimes \delta_{y(1-s)}(\mathrm dz),
&\text{ in the Fr\'echet case.}
\end{cases}
\]
on the space of measures on $\mathcal W$ equipped with the vague
topology.
\end{lemma}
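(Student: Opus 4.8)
The plan is to prove Lemma~\ref{lem:local} by the standard method for establishing convergence of superpositions of independent point processes to a Poisson point process: check convergence of intensities (first moments) and vanishing of the probability of multiple points in small cells, or equivalently show convergence of Laplace functionals. Since $\Gamma_t$ is built from the points $(\tau_n, W_n, Z_n(t))_{n\le M(t)}$ where the $\tau_n$ come from a rate-$\theta$ Poisson process, the $W_n$ are i.i.d.\ $\sim\mu$, and $Z_n(t)=Y_n(W_n(t-\tau_n))\bs 1_{t\ge\tau_n}$ with independent Yule processes, the natural route is to condition on the Poisson process $(\tau_n)$ and on the weights, and use the Poisson structure directly. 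Concretely, I would fix a continuous compactly supported test function $f\ge 0$ on $\mathcal W$ and compute
\[
\EE\big[\e{-\Gamma_t(f)}\big] = \EE\Big[\exp\Big(-\sum_{n=1}^{M(t)} f\big(\tfrac{\tau_n}{u_t}, \tfrac{W_n-v_t}{w_t}, \tfrac{\log Z_n(t)-tv_t}{tw_t}\big)\Big)\Big].
\]
Because the table-creation times form a Poisson process of rate $\theta$ on $[0,t]$, and the marks $(W_n, Y_n)$ attached to distinct tables are i.i.d., the sum inside is a functional of a marked Poisson process on $[0,t]$; by the Lévy--Khinchine/exponential formula for Poisson processes, its Laplace transform equals
\[
\exp\Big(-\theta\int_0^t \big(1-\EE\big[\e{-f(s/u_t,\, (W-v_t)/w_t,\, (\log Z(t)-tv_t)/(tw_t))}\big]\big)\,\mathrm ds\Big),
\]
where under the inner expectation $W\sim\mu$ and $\log Z(t) = \log Y(W(t-s))$ with $Y$ an independent rate-$1$ Yule process. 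So everything reduces to showing that
\[
\theta\int_0^t \big(1-\EE\big[\e{-f(\cdots)}\big]\big)\,\mathrm ds \;\longrightarrow\; \int_{\mathcal W}\big(1-\e{-f}\big)\,\mathrm d\zeta
\]
as $t\to\infty$.

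\textbf{The analytic core} is then a change of variables and an asymptotic expansion. Substituting $s = u_t\sigma$, so $\mathrm ds = u_t\,\mathrm d\sigma$ and recalling $u_t v_t = t w_t$, $u_t = o(t)$, the key inputs are: (a) the extreme-value scaling of $\mu$, namely $u_t\mu(v_t + x w_t, M)\to\Phi(x)$ pointwise (Assumption~\ref{H}), which after integrating against the density controls the distribution of the rescaled weight $(W-v_t)/w_t$, producing the factor $-\Phi'(y)\,\mathrm dy$; and (b) the Yule-process asymptotics~\eqref{eq:cv_yule}, $Z(t)\sim\zeta\e{W(t-s)}$ with $\zeta$ unit exponential, which gives
\[
\frac{\log Z(t) - tv_t}{tw_t} = \frac{W(t-s) - tv_t + \log\zeta + o(1)}{tw_t}.
\]
One must check that $W(t-s)$, after subtracting $tv_t$ and dividing by $tw_t$, concentrates: writing $W = v_t + yw_t$ and $s = u_t\sigma$, a direct computation gives $\frac{W(t-s)-tv_t}{tw_t} = y - y\sigma\frac{u_t}{t} - v_t\sigma\frac{u_t}{tw_t} = y(1 - \sigma u_t/t) - \sigma$ (using $v_t u_t = t w_t$). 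In the Weibull/Gumbel cases $u_t/t\to 0$, so this tends to $y - \sigma$; in the Fréchet case $u_t = t$, $v_t = 0$, so it equals $y(1-\sigma)$ exactly — which reproduces the two cases of the delta-kernel $\delta_{y-s}$ versus $\delta_{y(1-s)}$ in the statement. The contribution of $\log\zeta/(tw_t)$ and the $o(1)$ Yule error both vanish because $tw_t\to\infty$ (as $tw_t = u_t v_t$ with $u_t\to\infty$ and $v_t$ bounded below — recall Lemma~\ref{lem:ut}(ii)), and this is exactly why the third coordinate of the limit is deterministic given the first two.

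\textbf{The main obstacle}, as I see it, is making step (b) uniform enough to pass to the limit under the integral $\theta\int_0^\cdot(1-\EE[\e{-f}])\mathrm ds$. The pointwise Yule asymptotic~\eqref{eq:cv_yule} holds for each fixed table, but here we need control of $\log Z(t)$ when the table was created at time $s = u_t\sigma$ with $\sigma$ ranging over a compact set and $t-s$ large but of order $t$; one needs quantitative Yule estimates (e.g.\ that $\e{-W(t-s)}Y(W(t-s))$ is within $\e{\pm\delta}$ of an exponential variable with probability $1-\eps$, uniformly in the relevant range of $W(t-s)$, which is bounded below since $W(t-s)\ge v_t(t-s)\to\infty$). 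A clean way around this is to use that $Y(r)\e{-r}$ is an $L^2$-bounded nonnegative martingale converging a.s.\ and in $L^1$ to a unit exponential, and to invoke uniform integrability / a maximal inequality to get the needed uniformity on the time scale $r = W(t-s) \in [\,v_t\cdot\text{const}\cdot t,\ t\,]$. Since $f$ is bounded and continuous, dominated convergence then upgrades the pointwise-in-$\sigma$ convergence of the integrand to convergence of the integral, the domination coming from $\big|1 - \EE[\e{-f}]\big|\le \theta^{-1}\cdot(\text{intensity of the support of }f)$, which is integrable in $\sigma$ by the extreme-value tail bound on $\mu$ and the fact that $f$ has compact support in the first coordinate. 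Assembling these pieces yields $\EE[\e{-\Gamma_t(f)}]\to\exp(-\int_{\mathcal W}(1-\e{-f})\,\mathrm d\zeta)$, which is the Laplace functional of $\mathrm{PPP}(\mathrm d\zeta)$, and hence the claimed convergence in distribution on the space of Radon measures on $\mathcal W$ with the vague topology.
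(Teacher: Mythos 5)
Your proposal is correct, but it takes a genuinely different route from the paper. You compute the Laplace functional of $\Gamma_t$ directly via the marking/exponential formula for the underlying Poisson process of creation times, reducing everything to the convergence of a single integral $\theta\int_0^t(1-\mathbb E[\e{-f}])\,\mathrm ds$; the paper instead proceeds in three modular steps: it first proves convergence of the two-dimensional process $\widehat\Psi_t$ of rescaled $(\tau_n,W_n)$ by checking Kallenberg's two conditions (void probabilities and means on boxes), then obtains the intermediate process $\Psi_t$ (with third coordinate $W_n(t-\tau_n)$ in place of $\log Z_n(t)$) by the same deterministic algebra you carry out in your ``analytic core'', and finally compares $\Gamma_t$ with $\Psi_t$ via a separate perturbation lemma (Lemma~\ref{lem:Psi_vs_Gamma}) for Lipschitz test functions. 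Your identification of the limit kernel, including the computation $\frac{W(t-s)-tv_t}{tw_t}=y(1-\sigma u_t/t)-\sigma$ and the dichotomy $\delta_{y-s}$ versus $\delta_{y(1-s)}$, matches the paper's \eqref{eq:trick} exactly. One remark on the obstacle you flag: inside the Laplace functional the inner expectation involves only \emph{one} generic Yule process, so no uniformity over tables is needed; the single almost-sure bound $\sup_{r\geq\delta t}|\log Y(r)-r|\to|\log\zeta|$ (which is exactly the quantity $R_n(\delta t)$ the paper uses in Lemma~\ref{lem:Psi_vs_Gamma}, with $\delta$ available because $W(t-s)\geq(v_t+bw_t)(t-au_t)\geq\delta t$ on the support of $f$) together with $tw_t\to\infty$ already kills the Yule fluctuation uniformly over the compact ranges of $\sigma$ and $y$, so your heavier martingale/uniform-integrability machinery is not required. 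What each approach buys: the paper's decomposition isolates the purely extreme-value part (where only void probabilities and first moments are needed) from the Yule correction, and reuses $\widehat\Psi_t$ elsewhere; your one-shot Laplace computation is shorter and avoids introducing $\Psi_t$, at the cost of having to justify a dominated-convergence passage where the domination $u_t(1-\mathbb E[\e{-f}])\leq\mathbf 1_{\{\sigma\leq a\}}\Phi_t(b)\leq 2\Phi(b)\mathbf 1_{\{\sigma\leq a\}}$ for large $t$ should be stated explicitly, as you essentially do.
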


To prove Lemma~\ref{lem:local}, we first prove that
\begin{equation}\label{eq:cv_Psit}
\Psi_t := \sum_{n=1}^{M(t)} \delta
\Big(\frac{\tau_n}{u_t}, \frac{W_n-v_t}{w_t}, \frac{W_n(t-\tau_n)- tv_t}{tw_t}\Big)
\to \mathrm{PPP}\big(\mathrm d\zeta(s, y, z)\big),
\end{equation}
on $\mathcal W$, 
and then prove that this implies convergence of $\Gamma_t$ on the same
space.  The difference between $\Gamma_t$ and $\Psi_t$ is that in the
third coordinate we have replaced $\log Z_n(t)$ with its conditional
mean  $W_n(t-\tau_n)$.  
We will show that Equation~\eqref{eq:cv_Psit} is a direct consequence of the following lemma:
\begin{lemma}\label{lem:hat_Psi}
For all $t\geq 0$, in distribution, as $t\to\infty$,
\[\widehat \Psi_t:=\sum_{n=1}^{M(t)} \delta
\Big(\frac{\tau_n}{u_t}, \frac{W_n-v_t}{w_t}\Big)
\to \mathrm{PPP}\big(\theta\mathrm ds\otimes 
-\Phi'(y)\mathrm dy\big),
\]
on the space of measures on $\widehat{\mathcal W}$ equipped with the vague topology,
where
\[\widehat{\mathcal W}
:=\begin{cases}
[0,\infty)\times (-\infty, \infty], & \text{in the Weibull and Gumbel cases,}\\
[0,1)\times (0, \infty],  & \text{in the Fr\'echet case.}
\end{cases}
\]
\end{lemma}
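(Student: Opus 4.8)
The plan is to prove convergence of the planar point processes
$\widehat\Psi_t$ via the standard criterion for Poisson convergence: it suffices to check that for every relatively compact rectangle $R=[a,b)\times(c,d]$ (or products thereof) whose closure is contained in the interior of $\widehat{\mathcal W}$, one has (i) $\EE[\widehat\Psi_t(R)]\to \zeta(R)$, where $\zeta=\theta\,\rd s\otimes(-\Phi'(y))\,\rd y$, and (ii) $\PP(\widehat\Psi_t(R)=0)\to e^{-\zeta(R)}$; since $\widehat\Psi_t$ is built from a Poisson process of creation times together with i.i.d.\ marks, the avoidance probability factorises and (i)+(ii) in fact yield full convergence of the Laplace functional against continuous compactly supported test functions, hence vague convergence in distribution. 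Concretely, I would condition on the Poisson process $(\tau_n)_{n\ge 1}$ of rate $\theta$: on this event, $\widehat\Psi_t$ is a superposition of independent single-atom processes, the $n$-th of which places a point at $(\tau_n/u_t,(W_n-v_t)/w_t)$ with probability $1$ in the first coordinate and with the law of $(W_n-v_t)/w_t$ (the normalised weight) in the second.

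First I would compute the intensity. Using that $M(t)$ is Poisson$(\theta t)$ and that the $\tau_n/t$ are (given $M(t)=m$) distributed like the order statistics of $m$ uniforms on $[0,1]$, the expected number of points of $\widehat\Psi_t$ in $[a,b)\times(c,d]$ equals
\be
\EE\Big[\sum_{n=1}^{M(t)}\bbone\{a u_t\le \tau_n< b u_t\}\,\mu\big((v_t+c w_t,\,v_t+d w_t]\big)\Big]
= \theta\,(b-a)\,u_t\cdot\big(\Phi_t(c)-\Phi_t(d)\big),
\ee
recalling the definition $\Phi_t(x)=u_t\mu(v_t+xw_t,M)$ from~\eqref{eq:Phi_t-def}; here I use that $b u_t = o(t)$ (since $u_t=o(t)$) so that $\tau_n<bu_t$ eventually has probability $\sim \theta b u_t$ worth of creation times contributing, with the $W_n$ independent of the $\tau_n$. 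Wait --- more carefully: the count is $\sum_n \bbone\{\tau_n\in[au_t,bu_t)\}$ times the per-table mark probability, and $\EE[\#\{n:\tau_n\in[au_t,bu_t)\}]=\theta(b-a)u_t$ exactly. By Assumption~\ref{H}, $\Phi_t(x)\to\Phi(x)$ pointwise, so the intensity converges to $\theta(b-a)(\Phi(c)-\Phi(d))=\zeta([a,b)\times(c,d])$, using $\Phi(c)-\Phi(d)=\int_c^d(-\Phi'(y))\,\rd y$ (valid since $\Phi$ is monotone and, in all three cases of Table~\ref{cheatsheet}, absolutely continuous on the relevant range).

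For the avoidance probabilities, conditionally on $(\tau_n)$, the event $\{\widehat\Psi_t(R)=0\}$ is $\bigcap_{n:\tau_n\in[au_t,bu_t)}\{(W_n-v_t)/w_t\notin(c,d]\}$, whose conditional probability is $\big(1-(\Phi_t(c)-\Phi_t(d))/u_t\big)^{K}$ with $K=\#\{n:\tau_n\in[au_t,bu_t)\}$. Taking expectations over the Poisson law of $K$ (mean $\theta(b-a)u_t$) gives $\exp\!\big(-\theta(b-a)u_t\cdot(\Phi_t(c)-\Phi_t(d))/u_t\cdot(1+o(1))\big)\to e^{-\theta(b-a)(\Phi(c)-\Phi(d))}$, again by $\Phi_t\to\Phi$ and $u_t\to\infty$. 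The same computation with a finite disjoint union of rectangles and positive weights gives the joint Laplace functional, establishing convergence to $\mathrm{PPP}(\theta\,\rd s\otimes(-\Phi'(y))\,\rd y)$ on $\widehat{\mathcal W}$. I expect the main subtlety to be bookkeeping at the "open" endpoints of $\widehat{\mathcal W}$: one must verify that no mass escapes to the excluded boundary at finite scale --- e.g.\ in the Fr\'echet case that points with second coordinate near $0$ stay negligible, and in all cases that the region $\{\tau_n\ge bu_t\}$ contributes nothing in the vague limit, which is automatic since any fixed compact subset of the interior of $\widehat{\mathcal W}$ has been handled and vague convergence only tests such sets. The genuinely delicate endpoint issues (closing brackets, mass escaping to $-\infty$ in the third coordinate) are deferred to Subsection~\ref{sub:young_and_unfit} and are not part of this lemma.
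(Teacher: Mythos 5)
Your proposal is correct and follows essentially the same route as the paper: both verify Kallenberg's criterion by computing the mean measure and the avoidance probabilities on compact boxes, using that the number of arrival times $\tau_n$ in an interval is Poisson, that the $W_n$ are i.i.d.\ and independent of the $\tau_n$, and that $\Phi_t\to\Phi$ by Assumption~\ref{H}. (Minor slip: your displayed intensity $\theta(b-a)u_t(\Phi_t(c)-\Phi_t(d))$ carries a spurious factor $u_t$, since $\Phi_t$ already contains one, but your subsequent avoidance computation and the stated limit $\theta(b-a)(\Phi(c)-\Phi(d))$ are consistent with the correct value.)
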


{Before proving Lemma~\ref{lem:hat_Psi}, we show how to deduce~\eqref{eq:cv_Psit} from it: If we set $s_{n,t} = \tau_n/u_t$ and $y_{n,t} = (W_n-v_t)/w_t$, then
\ban
W_n(t-\tau_n) 
&= (v_t + y_{n,t} w_t)(t-s_{n,t}u_t)\notag \\
&= tv_t + y_{n,t} t w_t - s_{n,t} u_tv_t - s_{n,t} y_{n,t} u_tw_t.\label{eq:trick}
\ean
In the Weibull and Gumbel cases, we have $u_tv_t = tw_t$, and thus
\ba
W_n(t-\tau_n) &= tv_t + (y_{n,t}-s_{n,t}) tw_t - s_{n,t} y_{n,t} u_tw_t,
\ea
which implies
\[\frac{W_n(t-\tau_n) -tv_t}{tw_t} = y_{n,t}-s_{n,t} - s_{n,t} y_{n,t} \frac{u_t}t.\]
Because $u_t/t \to 0$ as $t\uparrow\infty$, 
this concludes the proof of~\eqref{eq:cv_Psit} in the Weibull and Gumbel cases.
In the Fr\'echet case, because $u_t = t$ and $v_t = 0$, \eqref{eq:trick} gives
\[W_n(t-\tau_n) 
= y_{n,t} t w_t - s_{n,t} y_{n,t} t w_t = y_{n,t}(1-s_{n,t}) t w_t,\]
which concludes the proof of~\eqref{eq:cv_Psit}.}

\begin{proof}[Proof of Lemma~\ref{lem:hat_Psi}]
Invoking Kallenberg's theorem~\cite[Prop.\ 3.22]{Resnick}, it is enough to prove that for all compact boxes
$B=[0, a]\times [b, \infty]$,
where $b\in\mathbb R$ in the Weibull and
Gumbel cases, and $b>0$ in the Fr\'echet case, we have
\begin{itemize}
\item $\mathbb P(\widehat \Psi_t(B)=0)
\to\exp\big(\int_B \theta \Phi'(y) \mathrm ds\mathrm dy\big) 
= \exp(-\theta a \Phi(b)),$
\item $\mathbb E[\widehat \Psi_t(B)]
\to -\int_B \theta \Phi'(y)\mathrm ds\mathrm dy
= \theta a \Phi(b)$.
\end{itemize}
We let $I(t)$ be the set of all $n$ such that $\tau_n\leq au_t$; 
so that $|I(t)|$ is Poisson-distributed with parameter $a\theta u_t$.
We have
\[\mathbb P(\widehat \Psi_t(B)=0)
= \mathbb P\big(\forall 1\leq n\leq |I(t)|, W_n < v_t+bw_t\big)
=\mathbb E\Big[\big(1-\mu(v_t+bw_t, M)\big)^{|I(t)|}\Big],
\]
where we recall that $M\in\{1,\infty\}$ is the essential supremum of~$\mu$.
Since $|I(t)|$ is Poisson-distributed with parameter $a\theta u_t$, we get
\[\mathbb P(\widehat \Psi_t(B)=0)
= \exp\big(-a\theta u_t \mu(v_t+bw_t, M)\big)
= \exp\big(-a\theta u_t \mu\big(A(u_t)+bB(u_t), M\big)\big),\]
since, by definition, $v_t = A(u_t)$ and $w_t = B(u_t)$.
By Assumption~\ref{H}, 
\[\mathbb P(\widehat \Psi_t(B)=0)
\to \exp\big(-a\theta \Phi(b)\big),\]
as $t\uparrow\infty$,
which concludes the proof of the first assumption of Kallenberg's theorem.
For the second assumption, note that
\[\begin{split}
\mathbb E[\widehat \Psi_t(B)]
&= \mathbb E\Big[\sum_{n\in I(t)} \bs 1_{W_n\geq v_t+bw_t}\Big]
= \mathbb E\big[|I(t)|\big]\mu(v_t+bw_t, M)\\
&= a\theta u_t \mu\big(A(u_t)+bB(u_t), M\big)
\to a\theta \Phi(b),
\qquad \mbox{as } t\uparrow\infty,
\end{split}\]
by Assumption~\ref{H}. This concludes the proof.
\end{proof}

Lemma~\ref{lem:local} is an immediate consequence of the following
result and Lemma~\ref{lem:hat_Psi}, which established convergence of
$\Psi_t$. 
\begin{lemma}\label{lem:Psi_vs_Gamma}
For all continuous,
compactly supported functions $f : \mathcal W\to \mathbb R$, we have
\[\bigg|\int f\, \mathrm d\Gamma_t - \int f\, \mathrm d\Psi_t\bigg| \to 0,
\]
in distribution when $t\to\infty$.
\end{lemma}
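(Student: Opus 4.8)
\emph{Proof plan.} Fix a continuous, compactly supported $f\colon\mathcal W\to\mathbb R$. Since $\mathcal W$ is metrizable and $\mathrm{supp}\,f$ is compact, $f$ is bounded and uniformly continuous; I equip $\mathcal W$ with a product metric $d$ whose third factor $[-\infty,\infty]$ carries the metric $d_3(z,z')=|g(z)-g(z')|$ for $g(x)=x/(1+|x|)$, so that $|g'|\le1$ and hence $d_3(z,z+\Delta)\le|\Delta|$ for all real $z,\Delta$. Let $\omega_f$ denote the modulus of continuity of $f$ for $d$. The structural observation driving everything is that $\Gamma_t$ and $\Psi_t$ carry, at each index $n$, the \emph{same} first two coordinates $\big(\tau_n/u_t,(W_n-v_t)/w_t\big)$ and differ only in the third coordinate, by $\Delta_n(t):=\big(\log Z_n(t)-W_n(t-\tau_n)\big)/(tw_t)$. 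Hence
\[
\Big|\int f\,\mathrm d\Gamma_t-\int f\,\mathrm d\Psi_t\Big|\le\sum_{n=1}^{M(t)}\omega_f\big(|\Delta_n(t)|\big),
\]
and a summand vanishes unless $\big(\tau_n/u_t,(W_n-v_t)/w_t\big)$ lies in the projection of $\mathrm{supp}\,f$ onto the first two coordinates, a set contained in a box $[0,a]\times[b,\infty]$ with $b\in\mathbb R$ (and $a<1$, $b>0$ in the Fr\'echet case). Thus the number $N_t$ of contributing indices satisfies $\mathbb E[N_t]\le a\theta u_t\,\mu(v_t+bw_t,M)\to a\theta\Phi(b)<\infty$ by Assumption~\ref{H}, so $(N_t)_{t\ge0}$ is tight.

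The heart of the argument is to show $\max_{1\le n\le M(t)}|\Delta_n(t)|\to0$ in probability. Conditionally on the weights $(W_i)$ and creation times $(\tau_i)$, the variables $Z_n(t)=Y_n(W_n(t-\tau_n))$, $1\le n\le M(t)$, are independent, each the value at a deterministic time $s_n:=W_n(t-\tau_n)\ge0$ of a rate-one Yule process. Since $Y(s)$ is geometric with parameter $\mathrm e^{-s}$, an elementary computation gives $\mathbb P(Y(s)\ge x\mathrm e^{s})\le \mathrm e^{1-x}$ and $\mathbb P(Y(s)\le x\mathrm e^{s})\le x$, hence $\mathbb P(|\log Y(s)-s|\ge r)\le 2\mathrm e^{-r}$ for all $s\ge0$ and all $r\ge0$. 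A union bound over the (at most $M(t)$, with $\mathbb E[M(t)]=\theta t$) relevant indices then gives
\[
\mathbb P\Big(\max_{1\le n\le M(t)}\big|\log Z_n(t)-W_n(t-\tau_n)\big|\ge\eps\, t w_t\Big)\le 2\theta t\,\mathrm e^{-\eps tw_t},
\]
which tends to $0$ because $\log t=o(tw_t)$ in all three regimes (see Table~\ref{cheatsheet} and the discussion after Corollary~\ref{cor:one_table}). Dividing by $tw_t$ proves the claim, and this is the step I expect to be the only genuinely substantive one; it is short precisely because the Yule law is explicit.

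Finally I combine the two ingredients. Given $\rho,\delta>0$, tightness of $(N_t)$ yields $R$ with $\sup_t\mathbb P(N_t>R)\le\delta$, and $\omega_f(\eps)\downarrow0$ yields $\eps>0$ with $R\,\omega_f(\eps)\le\rho$. On the event $\{\max_{n\le M(t)}|\Delta_n(t)|<\eps\}\cap\{N_t\le R\}$, using $d_3(z_n,z_n+\Delta_n(t))\le|\Delta_n(t)|<\eps$ and uniform continuity, we get $\big|\int f\,\mathrm d\Gamma_t-\int f\,\mathrm d\Psi_t\big|\le N_t\,\omega_f(\eps)\le\rho$; therefore $\mathbb P\big(\big|\int f\,\mathrm d\Gamma_t-\int f\,\mathrm d\Psi_t\big|>\rho\big)\le\mathbb P\big(\max_n|\Delta_n(t)|\ge\eps\big)+\mathbb P(N_t>R)$, whose $\limsup$ in $t$ is at most $\delta$; letting $\delta\downarrow0$ concludes. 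The only point that needs mild care is that a contributing point may have its third coordinate running off to $\pm\infty$; this is harmless once "uniformly continuous" is read with respect to the compactified metric $d_3$ on $[-\infty,\infty]$, which is exactly why we feed $d_3(z,z+\Delta)\le|\Delta|$, rather than $|\Delta|$ itself, into $\omega_f$.
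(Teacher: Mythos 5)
Your proof is correct, and while the overall decomposition is the one the paper uses — both arguments exploit that $\Gamma_t$ and $\Psi_t$ differ only in the third coordinate, by $\Delta_n(t)=(\log Z_n(t)-W_n(t-\tau_n))/(tw_t)$, and that only the tightly many points with $\tau_n\le au_t$, $W_n\ge v_t+bw_t$ can contribute — the key estimate is implemented genuinely differently. The paper first reduces to Lipschitz $f$ and controls the displacement of each contributing point by $R_n(\delta t)=\sup_{s\ge\delta t}|\log Y_n(s)-s|$, which requires the lower bound $W_n(t-\tau_n)\ge\delta t$ (valid only for $n$ in the contributing set $I(t)$), and then concludes from the almost sure convergence $R_n(\delta t)\to|\log\zeta_n|$ combined with the distributional finiteness of $|I(t)|$ — a combination that is slightly delicate because the index set varies with $t$. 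You instead keep a general continuous $f$ (via uniform continuity in a compactified metric satisfying $d_3(z,z+\Delta)\le|\Delta|$) and prove the stronger quantitative statement $\mathbb P\big(\max_{n\le M(t)}|\log Z_n(t)-W_n(t-\tau_n)|\ge\eps tw_t\big)\le 2\theta t\,\mathrm e^{-\eps tw_t}$ directly from the geometric law of $Y(s)$; since your tail bound $\mathbb P(|\log Y(s)-s|\ge r)\le 2\mathrm e^{-r}$ is uniform in $s\ge0$, the union bound runs over \emph{all} $M(t)$ tables and no lower bound on the exponents is needed. This buys a cleaner, fully quantitative combination step at essentially no cost; the only shared external input is $\log t=o(tw_t)$, which holds in all three regimes. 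The tail bounds, the tightness of $N_t$ via $\mathbb E[N_t]\to a\theta\Phi(b)$, and the final $\eps$--$\delta$ assembly all check out, so I see no gap.
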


\begin{proof}
First note that, by density of the set of Lipschitz-continuous,
compactly supported functions in the set of continuous, 
compactly supported functions with respect to $L^\oo$-norm, 
we may assume that $f$ is Lipschitz-continuous.
Let $a>0$ and $b\in\mathbb R$ in the Weibull and Gumbel 
cases, respectively $a\in[0,1)$ and  $b>0$ in the Fr\'echet case, 
and let $f: [0,a]\times [b, \infty] \times [-\infty,\infty]$ 
be a Lipschitz-continuous function of Lipschitz constant $\kappa$.
We have
\[\bigg|\int f\, \mathrm d\Gamma_t - \int f\, \mathrm d\Psi_t\bigg|
\leq \kappa\sum_{n\in I(t)} \frac{|\log Z_n(t)-W_n(t-\tau_n)|}{tw_t},
\] 
where $I(t)$ is the set of all integers $n$ such that
\[\tau_n \in [0, a u_t]
\quad\text{ and }\quad
W_n\geq v_t + b w_t.\]
For all $n\geq 1$ and $s\geq 0$, 
we set $R_n(s)= \sup_{t\geq s} |\log Y_n(t) - t|$, 
where we recall from \eqref{eq:yule} that 
$Y_n$ is the Yule process such that
$Z_n(t)=Y_n(W_n(t-\tau_n))\bs 1_{t\geq \tau_n}$.
 By definition, 
$v_t +bw_t\to M\geq 1$, and $u_t \leq t$ (see
Equation~\eqref{eq:cond_u}).
This means that there is $\delta>0$ such that
 $W_n(t-\tau_n)\geq (v_t+bw_t)(t-au_t)\geq \delta t$ 
for all $t$ large enough (we can take $\delta=(1-a)/2$ in the
Fr\'echet case and $\delta=\nicefrac12$ in the Weibull and Gumbel
cases). 
We thus get that, almost surely for all $t$ large enough,
\[
\bigg|\int f\, \mathrm d\Gamma_t - \int f\, \mathrm d\Psi_t\bigg|
\leq \frac{\kappa}{t w_t} \sum_{n\in I(t)} R_n(\delta t).
\]
For all integers~$n$, note that $R_n(\delta t)\to |\log \zeta_n|$
almost surely as $t\uparrow\infty$.
Moreover, $\liminf_{t\to\infty} t w_t = \infty$.
Since, in addition, by Lemma~\ref{lem:hat_Psi} and its proof, 
$|I(t)|=\hat \Psi_t([0,a]\times[b,\infty])$ 
converges in distribution to an almost-surely finite random variable independent of $(\zeta_n)$
this concludes the proof.
\end{proof}

\subsection{New and unfit tables do not contribute}
\label{sub:young_and_unfit}

To get convergence of $\Gamma_t$ on $\mathcal S$ rather than
$\mathcal W$
we prove that ``new'' tables, as well as tables with small weight, 
are too small to contribute to the limit. 
We start with the new tables.
\begin{lemma}\label{lem:young}
For all $\varepsilon, \kappa>0$, there exists {\peter $x_0<M$} such that, for all sufficiently large $t$, {\cec for all $x\geq x_0$,}
\[\mathbb P\Big(\max_{n\leq M(t)} \big(\log Z_n(t)\big) \bs 1_{\{\tau_n\geq x u_t\}}\geq 
\ell_\kappa(t)\Big)\leq \varepsilon,\]
where 
\begin{equation}\label{eq:def_ell}
\ell_\kappa(t)
:=\begin{cases}
tv_t-\kappa tw_t & \text{ in the Weibull and Gumbel cases,}\\
\kappa tw_t &\text{ in the Fr\'echet case.} 
\end{cases}
\end{equation}
\end{lemma}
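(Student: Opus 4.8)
The plan is to bound, for a given table index $n$ with $\tau_n \geq x u_t$, the probability that $\log Z_n(t)$ exceeds the threshold $\ell_\kappa(t)$, and then sum over $n$ (using that $M(t)$ is Poisson of parameter $\theta t$). Recall from~\eqref{eq:yule} that $Z_n(t) = Y_n(W_n(t-\tau_n))$, so conditionally on $W_n$ and $\tau_n$ the quantity $Z_n(t)$ is the value of a rate-$1$ Yule process at time $W_n(t-\tau_n)$. The key elementary fact is that a rate-$1$ Yule process $Y(s)$ satisfies $Y(s) \eqd \lceil E/s' \rceil$-type geometric tails; concretely $\mathbb{P}(Y(s) \geq k) = (1-e^{-s})^{k-1}$, so $\mathbb{P}(\log Y(s) \geq m) \leq e^{-(e^m-1)e^{-s}} \le \exp(-e^{m-s})$ roughly. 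Hence, writing $s = W_n(t-\tau_n)$, we get $\mathbb{P}(\log Z_n(t) \geq m \mid W_n, \tau_n) \lesssim \exp(-e^{m - W_n(t-\tau_n)})$ — doubly exponentially small once $m < W_n(t-\tau_n)$ by a growing margin.

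\medskip

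First I would reduce to controlling $W_n(t-\tau_n)$: since $\tau_n \geq xu_t$ forces $t - \tau_n \leq t - xu_t$, and since $u_t = o(t)$, we have $t-\tau_n \leq t(1 - x u_t/t)$, so the conditional exponent $W_n(t-\tau_n)$ is maximized over the relevant range by taking $\tau_n$ as small as allowed and $W_n$ as large as is "typical". The heart of the matter is that, for $\tau_n$ between $x u_t$ and $t$, the weight $W_n$ needed to make $W_n(t-\tau_n)$ reach $\ell_\kappa(t)$ is of order $v_t + \Theta(w_t)$ or larger, and the probability that *some* such table has such a large weight is controlled exactly by the first-moment computations already done in Lemma~\ref{lem:hat_Psi} (i.e.\ by $u_t \mu(v_t + c w_t, M) \to \Phi(c)$). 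So I would split the sum over $n$ according to dyadic blocks of $\tau_n/u_t$ in $[x, \infty)$ (equivalently blocks of $t-\tau_n$), and on each block bound the expected number of tables whose weight is large enough to push $\log Z_n(t)$ near $\ell_\kappa(t)$, using a union bound together with the doubly-exponential Yule tail to absorb the fluctuation of $\log Z_n(t)$ around its conditional mean $W_n(t-\tau_n)$.

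\medskip

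Concretely: in the Weibull/Gumbel case $\ell_\kappa(t) = tv_t - \kappa t w_t$ and $u_t v_t = t w_t$; for a table created at time $\tau_n = \beta u_t$ with $\beta \geq x$, the conditional mean is $W_n(t - \beta u_t)$, and requiring $W_n(t-\beta u_t) \geq \ell_\kappa(t) - (\text{slack})$ translates, after dividing through, into $W_n \geq v_t + w_t\big(g(\beta) + o(1)\big)$ for an explicit increasing-in-$\beta$ function $g$ with $g(x) \to +\infty$ as $x \to x_{\max}$ — this is where the choice of $x$ close to $x_{\max}$ enters, making $\Phi(g(x))$ arbitrarily small. The expected number of offending tables in the block is then $\lesssim u_t \mu(v_t + g(\beta) w_t, M) \to \Phi(g(\beta))$, which is summable over dyadic $\beta$ and small once $x$ is large; the Yule fluctuation term only improves matters and is handled by the doubly-exponential bound plus a crude union bound over the at most $\theta t$ (with high probability) tables. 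The Fréchet case is analogous with $\ell_\kappa(t) = \kappa t w_t$, $u_t = t$, $v_t = 0$, $w_t = t^{1/\alpha} L_3(t)$: here $\tau_n = \beta t$ with $\beta \in [x,1)$, the conditional mean is $W_n t(1-\beta)$, and $W_n t(1-\beta) \geq \kappa t w_t$ forces $W_n \geq \kappa w_t/(1-\beta)$, so the expected count is $\lesssim t\,\mu(\kappa w_t/(1-\beta), \infty) \to (\kappa/(1-\beta))^{-\alpha} \Phi(\kappa)$-type quantity, again summable over dyadic blocks of $1-\beta$ and small for $x$ close to $1$.

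\medskip

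The main obstacle I anticipate is bookkeeping the two sources of randomness simultaneously and uniformly in $t$: the fluctuation of $\log Z_n(t)$ around $W_n(t-\tau_n)$ must be controlled over *all* $n \leq M(t)$ at once, and $M(t)$ itself is random (Poisson, $\theta t$). The clean way is: condition on $M(t)$; for each fixed $n$ use the exact Yule tail $\mathbb{P}(Y_n(s) \geq e^m) = (1-e^{-s})^{\lceil e^m\rceil - 1} \le \exp(-(e^m - 1)e^{-s})$ to get, after integrating over $W_n \sim \mu$ and over $\tau_n$ in the block, a bound of the form $C \exp(-c\, t w_t)$ plus the "typical-weight" contribution $\lesssim \Phi(g(x))$ captured by Assumption~\ref{H}; then sum the first type over $\lesssim t$ tables (still negligible because $tw_t \to \infty$ faster than $\log t$, exactly as used in the proof of Corollary~\ref{cor:one_table}), and make the second type $\leq \varepsilon$ by choosing $x$ close enough to $x_{\max}$. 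Finally remove the conditioning on $M(t)$ using concentration of the Poisson. The only genuinely delicate point is verifying that the function $g(\beta)$ coming from "$W_n(t-\beta u_t) \gtrsim \ell_\kappa(t)$" indeed tends to infinity as $\beta \to x_{\max}$ in each of the three regimes — this is a direct computation from the definitions in Table~\ref{cheatsheet} and the relation $u_tv_t = tw_t$, but it is the step on which the whole lemma hinges.
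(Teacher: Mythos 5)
Your proposal follows essentially the same route as the paper: a first-moment/union bound over the tables with $\tau_n\geq xu_t$, splitting according to whether $W_n(t-\tau_n)$ can reach $\ell_\kappa(t)$ (in which case one counts the expected number of such tables via $u_t\,\mu(v_t+\cdot\,w_t,M)\to\Phi$) or not (in which case the doubly-exponential Yule tail kills the term, summed over the $O(t)$ tables using $tw_t\gg\log t$), and your function $g(\beta)$ is exactly the paper's $a-2\kappa$. The one point you should make explicit is that controlling $\sum_\beta \Phi_t(g(\beta))$ (the paper's $\int_x^\infty\Phi_t(a-2\kappa)\,\mathrm da$) \emph{uniformly in $t$} is not a consequence of the pointwise convergence $\Phi_t\to\Phi$ alone; it is precisely where the $L^1$ strengthening \eqref{eq:maxdomain-3} of Assumption~\ref{H} is needed in the Weibull and Gumbel cases.
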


\begin{proof}
Recall the Yule processes $Y_n$ from \eqref{eq:yule}.
For all $n\geq 1$, set 
\be
A_n = \sup_{s\geq 0} Y_n(s)\mathrm e^{-s} 
= \sup_{s\geq \tau_n} Z_n(s)\mathrm e^{-W_n(s-\tau_n)}.
\ee
Note that the $A_n$ are i.i.d.\ and that
$A_n$ is in fact independent of $W_n$ 
 as it only depends on $Y_n$.  Let 
$A=\sup_{s\geq 0} Y(s)\mathrm e^{-s}$ be a random variable with the
distribution of the~$A_n$.  Then we have the following tail-bound:
for some $C>0$
\be\label{eq:star}
\mathbb P(A>u)\leq C\e{-u/2},
\qquad \mbox{for all } u>0.
\ee
This {\peter{is}} proved using the maximal inequality for the submartingale 
$\exp(\theta Y(s)\e{-s})$, where $\theta\in(0,1)$, and that
$\mathbb E[\exp(\theta Y(s)\e{-s})]$ is uniformly bounded,
which may be verified using the explicit distribution, 
$\mathbb P(Y(s)=k)=\e{-s}(1-\e{-s})^{k-1}$ for~$k\geq1$.%
\smallskip

Let $I_x(t)$ be the set of all integers~$n$ such that $\tau_n\geq x u_t$; 
using a union bound in the second inequality, we get
\ba
\mathbb P\Big(\max_{n\leq M(t)}& \log Z_n(t)\bs 1\{\tau_n\geq x u_t\} 
\geq \ell_\kappa(t)
\Big)\\
&\leq \mathbb P\big(\exists n\in I_x(t)\colon A_n\geq \exp\big(\ell_\kappa(t) - W_n (t-\tau_n)\big)\\
&\leq \mathbb E\bigg[\sum_{n\in I_x(t)} \mathbb P\Big(A_n\geq \exp\big(\ell_\kappa(t) - W_n (t-\tau_n)\big) \Big| (\tau_n) \Big)\bigg].
\ea 
As $(\tau_n)_{n\geq 1}$ is a Poisson process of parameter $\theta$, 
independent of $(A_n)$ and $(W_n)$,
\[
\mathbb P\Big(\max_{n\leq M(t)} \log Z_n(t)\bs 1\{\tau_n\geq x u_t\} 
\geq \ell_\kappa(t)
\Big)
\leq   \theta\,\int_{xu_t}^t\mathrm ds \,  \mathbb P\Big(A\geq \exp\big(\ell_\kappa(t) - W(t-s)\Big) ,
\]
where $A$ is a copy of $A_1$ and $W$ a copy of $W_1$, independent of each other.
Thus, 
\be\begin{split}\label{eq:before_cases}
\mathbb P\Big( &\max_{n\leq M(t)} \log Z_n(t)\bs 1\{\tau_n\geq x u_t\} \geq \ell_\kappa(t)
\Big)\\
&\leq  \theta\, \int_{xu_t}^t \mathrm ds\,  \int_0^{\infty}  \mathrm d\mu(w) \, \mathbb P\big(A\geq \exp\big(\ell_\kappa(t) - w(t-s)\big)\big)\\
&=  \theta\, \int_{x}^{t/u_t} \mathrm da\,   \int_{-v_t/w_t}^{\infty} \mathrm d\tilde\mu_t(u)
\mathbb P\big(A\geq \exp\big(\ell_\kappa(t) -
(v_t+uw_t)(t-au_t)\big),
\end{split}\ee
where $\mathrm d\tilde\mu_t(u) := u_t  \mathrm d\mu(v_t+uw_t)$ and 
we have used the changes of variable $s = au_t$ and $w=v_t+uw_t$.
We treat the rest of the proof separately for the Weibull and Gumbel cases on the one hand, 
and the Fr\'echet case on the other hand.\smallskip

{\bf The Weibull and Gumbel cases:}
In these cases, $\ell_\kappa(t) = tv_t -\kappa tw_t$ and 
$u_tv_t = tw_t$, which implies that
\ban
&\mathbb P\Big(\max_{n\leq M(t)} \log Z_n(t)\bs 1\{\tau_n\geq x u_t\} 
\geq \ell_\kappa(t)
\Big)\notag\\
&\leq  \theta \int_{x}^{t/u_t} \mathrm da  \int_{-v_t/w_t}^{\infty}\mathrm d\tilde\mu_t(u)\, 
\mathbb P\big(A\geq \exp\big(-(\kappa+u)tw_t + au_t (v_t+uw_t)\big)\big)\notag\\
&\leq \theta \int_{x}^{t/u_t} \mathrm da  \int_{-v_t/w_t}^{\infty} \mathrm d\tilde\mu_t(u)\, 
\bs 1\{a(v_t+uw_t)\leq (2\kappa+u)v_t\}\notag\\
&\hspace{1cm}+ \theta \int_{x}^{t/u_t} \mathrm da  \int_{-v_t/w_t}^{\infty} \mathrm d\tilde\mu_t(u)
 \bs 1\{a(v_t+uw_t)>(2\kappa+u)v_t\}
\mathbb P\big(A\geq \mathrm e^{\kappa tw_t}\big) \notag\\
&\leq \theta \int_{x}^{t/u_t} \!\!\!\mathrm da \int_{-v_t/w_t}^{\infty} \!\!\mathrm d\tilde\mu_t(u)\, 
\bs 1\{a(v_t+uw_t)\leq (2\kappa+u)v_t\} + C\mathrm e^{-\frac12\exp(\kappa tw_t)}tw_t.
\label{eq:bla}
\ean
{In the last step, we used that there exists a constant $C>0$ such that
$\mathbb P(A\geq u)\leq C\mathrm e^{-\nicefrac u2}$ for all $u\geq 0$,
and  also that $\int_{-\infty}^{\infty} \mathrm d\tilde
\mu_t(u) = u_tw_t$. }
Since $tw_t\to\oo$, we get that the second term
above tends to zero as $t\uparrow\infty$. 
{For the first term, note that, for all $a<t/u_t = v_t/w_t$,
\[a(v_t+uw_t)\leq (2\kappa+u)v_t
\quad\Leftrightarrow\quad
u\geq \frac{a-2\kappa}{1-aw_t/v_t}
\quad\Rightarrow\quad 
u\geq a-2\kappa,\]
and thus, for all $x>2\kappa$,
\ban
&\theta \int_{x}^{t/u_t}\mathrm da  \int_{-v_t/w_t}^{\infty}\mathrm d\tilde\mu_t(u)\,  \bs 1\{a(v_t+uw_t)
\leq (2\kappa+u)v_t\}\notag\\
&\leq \theta \int_{x}^{\infty} \mathrm da \int_{a-2\kappa}^{\infty} \mathrm d\tilde\mu_t(u)
= \theta \int_{x}^{\infty} \mathrm da\, \Phi_t(a-2\kappa)
\to \theta \int_{x}^{\infty} \mathrm da \, \Phi(a-2\kappa),
\label{eq:young_up_bound}
\ean
as $t\uparrow\infty$, by Assumption~\ref{H}, see~\eqref{eq:maxdomain-3}. 
\smallskip

We look at the two different possibilities for $\Phi$: 
in the Weibull case, 
$\Phi$ is zero on $(0, \infty)$,
and thus
$\int_{x}^{\infty} \theta\mathrm da \Phi(a-2\kappa)=0$
as soon as $x>2\kappa$.
In the Gumbel case, 
we have $\Phi(u) = \mathrm e^{-\alpha u}$ for some $\alpha>0$, 
and thus
\[ \int_{x}^{\infty} \theta\mathrm da \Phi(a-2\kappa)
=\int_{x}^{\infty} \theta\mathrm da \mathrm e^{-\alpha(a-2\kappa)}
= \frac1{\alpha} \mathrm e^{-\alpha(x-2\kappa)},\]
which tends to zero as $x\to\infty$.
In both the Weibull and Gumbel cases, 
we thus get that for all $\delta>0$, for all~$x$ large enough,
$\int_{x}^{\infty} \theta\mathrm da \Phi(a-2\kappa)\leq \nicefrac\delta2.$
Therefore, by~\eqref{eq:bla} and~\eqref{eq:young_up_bound}, for all $x$ large enough, for all $t$ large enough,
\begin{equation}\label{eq:concl}
\mathbb P\Big(\max_{n\leq M(t)} \log Z_n(t)\bs 1\{\tau_n\geq x u_t\} \Big)
\leq \delta,
\end{equation}
which concludes the proof.}
\medskip

{\bf The Fr\'echet case:}
In the Fr\'echet case, $v_t =0$, $u_t= t$, and $\ell_\kappa(t) = \kappa tw_t$.
Thus, \eqref{eq:before_cases} becomes
\be\begin{split}
\mathbb P\Big(\max_{n\leq M(t)} &\log Z_n(t)\bs 1\{\tau_n\geq x t\} \Big)\\
&\leq \theta \int_x^1 \mathrm da \int_0^{\infty} \mathrm d\tilde\mu_t(u)
\mathbb P\big(A\geq \exp\big((\kappa-(1-a)u) t w_t\big) \\
&\leq \theta  \int_x^1 \mathrm da \int_0^{\infty}  \mathrm d\tilde\mu_t(u) \bs 1\{(1-a)u\geq \kappa/2\}\\
&\qquad+ C\mathrm e^{-\frac12\exp(\kappa tw_t/2)} 
\theta \int_x^1 \mathrm da  \int_0^{\infty} \mathrm d\tilde\mu_t(u) \bs 1\{(1-a)u<\kappa/2\}
\notag\\
&\leq \theta \int_0^{\infty} \big(1-\tfrac{\kappa}{2u}-x\big)_{\!+} \mathrm d\tilde\mu_t(u)
+ C\theta tw_t \mathrm e^{-\frac12\exp(\kappa tw_t/2)},\label{eq:F2terms_Frechet}
\end{split}\ee
because $\tilde\mu(0,\infty) = tw_t$.
The second term goes to zero as $t\uparrow\infty$ for all $\kappa>0$.
For the first term, we get
\ba
\int_0^{\infty} \theta\Big(1-\tfrac{\kappa}{2u}-x\Big)_{\!+} \mathrm d\tilde\mu_t(u)
&\leq \theta\int_{\frac{\kappa}{2(1-x)}}^{\infty} \mathrm d\tilde\mu_t(u)
= \theta\Phi_t\big(\tfrac{\kappa}{2(1-x)}\big)\\
&=(\theta+o(1)) \Phi\big(\tfrac{\kappa}{2(1-x)}\big),
\ea
as $t\uparrow\infty$, by Assumption~\ref{H}.
Thus, making $x$ close to~1, one can make the first term of~\eqref{eq:F2terms_Frechet} as small as desired, which concludes the proof in the Fr\'echet case.
\end{proof}

{\cec In the following lemma, we control the contributions of the small-weight tables:}
\begin{lemma}\label{lem:unfit}
For all $\varepsilon, \kappa>0$, there exists {\cec $y_0$} such that, for all sufficiently large $t$, {\cec for all $y\geq y_0$},
\[\mathbb P\Big(\max_{n\leq M(t)} \log Z_n(t)\bs 1\{W_n\leq v_t - yw_t\} 
\geq \ell_\kappa(t)\Big)\leq \varepsilon,\]
where $\ell_\kappa(t)$ is defined in~\eqref{eq:def_ell}.
\end{lemma}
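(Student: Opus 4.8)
The plan is to follow the scheme of the proof of Lemma~\ref{lem:young}, but the argument will be considerably shorter: on the event considered here the weight $W_n$ is bounded a priori by $v_t-yw_t$, so there is no need to integrate against the extreme-value function $\Phi_t$. I would begin by recalling from the proof of Lemma~\ref{lem:young} the i.i.d.\ random variables $A_n=\sup_{s\ge0}Y_n(s)\mathrm e^{-s}$, which are independent of $(W_n)$ and of $(\tau_n)$, satisfy the tail bound $\mathbb P(A>u)\le C\mathrm e^{-u/2}$ from \eqref{eq:star}, and yield, for $t\ge\tau_n$, the domination $Z_n(t)\le A_n\mathrm e^{W_n(t-\tau_n)}$, hence $\log Z_n(t)\le\log A_n+W_n(t-\tau_n)$.

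In the Weibull and Gumbel cases I would fix any $y>\kappa$. First I would check that $v_t-yw_t>0$ for all large $t$: since $w_t/v_t=u_t/t\to0$ by Lemma~\ref{lem:ut} and $v_t>0$ ($v_t=1$ in the Weibull case, $v_t$ bounded from zero in the Gumbel case by Lemma~\ref{lem:ut}), we have $v_t-yw_t=v_t(1-yu_t/t)>0$ once $yu_t/t<1$. Then, for a table $n$ with $\tau_n\le t$ and $W_n\le v_t-yw_t$, bounding $t-\tau_n\le t$ gives $W_n(t-\tau_n)\le(v_t-yw_t)t$, so, since $\ell_\kappa(t)=tv_t-\kappa tw_t$, the event $\{\log Z_n(t)\ge\ell_\kappa(t),\,W_n\le v_t-yw_t\}$ is contained in $\{A_n\ge\mathrm e^{(y-\kappa)tw_t}\}$. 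Taking a union bound over the $M(t)$ tables present at time $t$, and using that $(A_n)$ is independent of $M(t)$, the tail bound, and that $M(t)$ is Poisson with parameter $\theta t$, I would conclude that the probability in the statement is at most $\mathbb E[M(t)]\,\mathbb P\big(A\ge\mathrm e^{(y-\kappa)tw_t}\big)\le C\theta t\,\mathrm e^{-\frac12\mathrm e^{(y-\kappa)tw_t}}$, which tends to $0$ because $tw_t\to\infty$ and $y>\kappa$; in particular it is $\le\varepsilon$ for all $t$ large enough.

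In the Fr\'echet case $v_t=0$, $u_t=t$ and $\ell_\kappa(t)=\kappa tw_t$; here I would instead choose $y\in(-\kappa,0)$, so that $v_t-yw_t=|y|w_t>0$ with $|y|<\kappa$. For $\tau_n\le t$ and $W_n\le|y|w_t$ the same bound gives $\log Z_n(t)\le\log A_n+|y|w_t\,t$, hence $\{\log Z_n(t)\ge\kappa tw_t,\,W_n\le|y|w_t\}\subseteq\{A_n\ge\mathrm e^{(\kappa-|y|)tw_t}\}$, and the identical union bound shows the probability is at most $C\theta t\,\mathrm e^{-\frac12\mathrm e^{(\kappa-|y|)tw_t}}\to0$. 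This would complete the proof in all three cases.

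I do not anticipate a genuinely hard step. The points requiring care are: verifying that $v_t-yw_t$ stays positive, so that the deterministic bound $W_n(t-\tau_n)\le(v_t-yw_t)t$ is valid on the relevant event; choosing the sign of $y$ correctly in each extreme-value class (large positive in the Weibull and Gumbel cases, small negative in the Fr\'echet case); and observing that, precisely because $W_n$ is deterministically bounded on the event in question, the more elaborate analysis of Lemma~\ref{lem:young} — the change of variables producing $\tilde\mu_t$ and the case split on $a(v_t+uw_t)$ versus $(2\kappa+u)v_t$ — is unnecessary here.
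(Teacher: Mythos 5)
Your proposal is correct and follows essentially the same route as the paper's proof: dominate $Z_n(t)$ by $A_n\mathrm e^{W_n(t-\tau_n)}$ with $A_n=\sup_{s\geq 0}Y_n(s)\mathrm e^{-s}$, bound $W_n(t-\tau_n)\leq (v_t-yw_t)t$ on the event in question, take a union bound over the Poisson($\theta t$) many tables, and use the tail bound \eqref{eq:star} together with $tw_t\to\infty$. Your treatment is in fact slightly more careful than the paper's on two minor points (checking $v_t-yw_t>0$, and making the sign convention for $y$ in the Fr\'echet case explicit), but the argument is the same.
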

\pagebreak[3]

\begin{proof}
This proof is very similar to the proof of the previous lemma. Note that,  for all $n\geq 1$, $t\geq 0$, 
if $\log Z_n(t)\geq\ell_\kappa(t)$ and $W_n\leq v_t-y w_t$, 
then
\ba
\log Z_n(t)-W_n(t-\tau_n)
&\geq \ell_\kappa(t) - (v_t-yw_t)t\\
&=\begin{cases}
(y-\kappa)tw_t & \text{ in the Weibull and Gumbel cases},\\
(y+\kappa)tw_t & \text{ in the Fr\'echet case.}
\end{cases}
\ea
Therefore, using the independence of $M(t)$ and $(A_n)$,
\[\mathbb P\Big(\max_{n\leq M(t)} \log Z_n(t)\bs 1\{W_n\leq v_t-y w_t\} \geq \ell_\kappa(t)\Big)
\leq \mathbb E[M(t)]\mathbb P\big(A_1\geq \exp((y-\kappa)tw_t)\big).\]
Recall that $M(t)$ is Poisson distributed of parameter $\theta t$, and thus
\[\mathbb P\Big(\max_{n\leq M(t)} \log Z_n(t)\bs 1\{W_n\leq v_t-y w_t\} \geq \ell_\kappa(t)\Big)
\leq C_0\theta t \exp\big(-\tfrac12 \exp((y-\kappa)tw_t)\big),
\]
where we used that $\PP(A\geq x)\leq C_2 \mathrm e^{-x/2}$.
Since $w_t\to\infty$,
in the Fr\'echet case, $t$ can be made large enough so that $C_0\theta
t \exp(-\frac12 \exp((y+\kappa)tw_t))\leq \varepsilon$. In the Weibull
and Gumbel cases, 
for all $y>\kappa$, $t$ can be made large enough so that $C_0\theta t \exp(-\frac12 \exp((y-\kappa)tw_t))\leq \varepsilon$. This completes the proof in all three cases. 
\end{proof}

We now show how to deduce Theorem~\ref{thm:cv_gen} from
Lemmas~\ref{lem:local}, \ref{lem:young} and~\ref{lem:unfit}: 
\begin{proof}[Proof of Theorem~\ref{thm:cv_gen}]
We give details of the proof in the Weibull and Gumbel cases, as the Fr\'echet case is identical, except that the first coordinate takes values in $[0,1]$ instead of $[0, \infty]$, and the third in $(0,\infty]$ instead of $(-\infty,\infty]$.
\medskip

Let $f \colon [0,\infty]\times [-\infty,\infty]\times (-\infty,\infty]\to \mathbb R$ be a non-negative, continuous and compactly supported function. 
Let $\kappa>0$ such that $\{f\neq 0\}\subseteq [0,\infty]\times [-\infty,\infty]\times [-\kappa,\infty]=: \mathcal A(\kappa)$.
We aim to prove that, in distribution as $t\uparrow\infty$,
\be
\int f \mathrm d\Gamma_t \to \int f \mathrm d\Gamma_{\infty}\ee
Fix $\eta>0$. 
By Lemma~\ref{lem:young}, there exists {\cec $x_0 = x_0(\kappa, \eta)$ such that, for all $x\geq x_0$,}
\be\label{eq:B}
\liminf_{t\uparrow\infty}\mathbb P\big(\Gamma_t(\mathcal B(x,\kappa)) = 0\big)\geq 1-\eta,\ee
where we have set $\mathcal B(x,\kappa) = (x,\infty]\times[-\infty,\infty]\times [-\kappa, \infty]$.
Furthermore, by Lemma~\ref{lem:unfit}, 
there exists {\cec $y_0 = y_0(\kappa, \eta)$ such that, for all $y\geq y_0$,}
\be\label{eq:C}
\liminf_{t\uparrow\infty}\mathbb P\big(\Gamma_t(\mathcal C(y,\kappa)) = 0\big)\geq 1-\eta,\ee
where we have set $\mathcal C(y,\kappa) = [0,\infty]\times[-\infty, -y)\times [-\kappa, \infty]$.
For all $t\geq 0$,
\[\int f \mathrm d\Gamma_t
= \int_{\mathcal A(\kappa)} f \mathrm d\Gamma_t
= \int_{\mathcal A(\kappa)\cap\mathcal B(x,\kappa)^c\cap \mathcal C(y,\kappa)^c} f \mathrm d\Gamma_t + R(t),\]
where 
\[0\leq R(t)
\leq \int_{\mathcal B(x,\kappa)} f \mathrm d\Gamma_t +\int_{\mathcal C(y,\kappa)} f \mathrm d\Gamma_t.\]
By~\eqref{eq:B} and~\eqref{eq:C}, for all $t$ large enough, with probability at least $1-2\eta$, 
$\Gamma_t(\mathcal B(x,\kappa)) = \Gamma_t(\mathcal C(y,\kappa)) = 0$, implying that $R(t) = 0$.
To conclude, note that
\[\mathcal A(\kappa)\cap\mathcal B(x,\kappa)^c\cap \mathcal C(y,\kappa)^c
= (x,\infty]\times [-\infty,-y)\times[-\kappa,\infty],
\]
and thus, by Lemma~\ref{lem:local}, in distribution as $t\to\infty$,
\[\int_{\mathcal A(\kappa)\cap\mathcal B(x,\kappa)^c\cap \mathcal C(y,\kappa)^c} f \mathrm d\Gamma_t\to \int_{\mathcal A(\kappa)\cap\mathcal B(x,\kappa)^c\cap \mathcal C(y,\kappa)^c} f \mathrm d\Gamma_\infty.\]
Making $x$ and $y$ large enough, because $\Gamma_{\infty}$ has no atom, we can make the right-hand side arbitrarily close to $\int_{\mathcal A(\kappa)}f \mathrm d\Gamma_\infty = \int f \mathrm d\Gamma_\infty$, which concludes the proof.
\end{proof}

\section{Two-table theorem: 
Proof of Theorem~\ref{th:two_tables}}
\label{sec:twotabproof}

For the proof
of Theorem~\ref{th:two_tables} we treat the three cases (Weibull,
Gumbel and Fr\'echet) in parallel.  
Although technical details differ, the general strategy is the same
for all cases.  
We first work on the `exponents' instead of the table sizes.  That is,
we set, for all $t\geq 0$ and all $1\leq i\leq M(t)$,
\begin{equation}\label{eq:def_theta}
\Theta_n(t) :=W_n(t-\tau_n).
\end{equation}
Recall from~\eqref{eq:cv_yule} that
$Z_n(t) \sim \zeta_n \exp(\Theta_n(t))$ almost surely as $t\uparrow\infty$, 
where $(\zeta_n)_{n\geq 1}$ is a sequence of i.i.d.\ random variables of 
exponential distribution of parameter~1.
This is why we call the $\Theta_n(t)$ the `exponents'.
We also introduce the order statistics of this sequence,
$\Theta^{(1)}(t)\geq \Theta^{(2)}(t)\geq \Theta^{(3)}(t)\geq \ldots$
and we let $m_i=m_i(t)$ be the index such that
$\Theta^{(i)}(t)=\Theta_{m_i(t)}(t)$.  
Then $\tau_{m_i(t)}$ denotes the time of creation of the
table which at time $t$ has the $i$-th largest exponent.
In what follows we often
suppress the
$t$-dependence of $m_i(t)$ from the notation.\medskip

Recall the function $(w_t)_{t\geq 0}$ given in Lemma~\ref{lem:ut}.
In this section, we establish the following result, which, 
by the Borel--Cantelli lemma,
gives the existence of a diverging sequence of times $(t_k)_{k\geq 0}$
at which, almost surely, 
the largest and third-largest exponents are well separated.

\begin{figure}
\begin{center}
\includegraphics[width = 7cm]{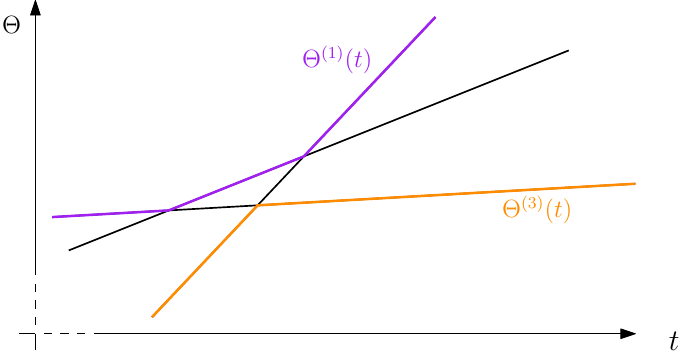}
\end{center}
\caption{Schematic picture of the largest exponents at a time of transition. 
In Proposition~\ref{prop:two_tables_summable}, we bound the gap between the largest exponent $\Theta^{(1)}(t)$ (in purple) and third largest exponent $\Theta^{(3)}(t)$ (in  orange).}
\label{fig:exponents}
\end{figure}

\begin{proposition}\label{prop:two_tables_summable}
Let $t_k=k^\eta$ and $\lambda_t=t^{-\kappa}$, where $\eta,\kappa>0$
satisfy $2\kappa \eta>1$.  
Then under Assumption~\ref{H} for the Weibull and Fr\'echet cases,
respectively Assumption~\ref{G} for the Gumbel case, we have
\be\label{eq:theta-sep-sum}
\sum_{k=1}^\oo \PP\big(\Theta^{(1)}(t_k)-\Theta^{(3)}(t_k)\leq 
\lambda_{t_k} t_k w_{t_k}\big)<\oo.
\ee
\end{proposition}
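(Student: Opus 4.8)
The plan is to bound the probability in \eqref{eq:theta-sep-sum} by an expression that is summable in $k$, using the point process heuristics made rigorous by Theorem~\ref{thm:cv_gen}, but quantified so as to give a polynomial-in-$t$ decay. The first step is to note that $\Theta^{(1)}(t)-\Theta^{(3)}(t)\leq \lambda_t t w_t$ forces at least three tables to have exponents within $\lambda_t t w_t$ of the maximum, hence in particular at least \emph{two} tables have exponents at least $\Theta^{(1)}(t)-\lambda_t t w_t$. I would then split according to whether $\Theta^{(1)}(t)$ itself is large, writing
\[
\PP\big(\Theta^{(1)}(t)-\Theta^{(3)}(t)\leq \lambda_t t w_t\big)
\leq \PP\big(\Theta^{(1)}(t)\leq \ell_\kappa(t)\big)
+\PP\big(\exists\, n\neq m\colon \Theta_n(t),\Theta_m(t)\in[\ell_\kappa(t)-\lambda_t t w_t,\,\Theta^{(1)}(t)]\big),
\]
for a suitable level $\ell_\kappa(t)$ of the form $tv_t+\Theta(tw_t)$ (matching the scaling in Table~\ref{cheatsheet}). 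The first term is a lower-bound estimate on the maximal exponent: since $\Theta_n(t)=W_n(t-\tau_n)$ and, around time $u_t$, there are $\Theta(u_t)$ tables whose weights exceed $v_t+x w_t$ with probability $\asymp \Phi_t(x)/u_t$, a second-moment or direct Poisson computation shows $\PP(\Theta^{(1)}(t)$ is atypically small$)$ decays at least polynomially; one must choose $\ell_\kappa(t)$ low enough that this probability is $O(t^{-p})$ with $p$ as large as desired by taking the implicit constant large, which is afforded by the exponential tail structure of $\Phi$ in the Gumbel/Weibull cases and the power tail in the Fr\'echet case.

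The core of the argument is the second term: controlling the probability of \emph{two distinct} tables both having large exponents lying in a narrow window of width $\asymp \lambda_t t w_t$. I would estimate this by a union bound / first-moment (Mecke-type) computation over pairs of tables $(n,m)$: conditionally on the Poisson creation times $(\tau_i)$ and using independence of the weights $(W_i)$, the probability that both $\Theta_n(t)=W_n(t-\tau_n)$ and $\Theta_m(t)=W_m(t-\tau_m)$ fall into $[\ell_\kappa(t)-\lambda_t t w_t,\infty)$ factorizes, and each factor is $\mu\big((\ell_\kappa(t)-\lambda_t t w_t)/(t-\tau_n),M\big)$. Changing variables $\tau_n=a u_t$, $\tau_m=b u_t$ and recalling $u_t v_t=t w_t$ (Fr\'echet: $u_t=t$, $v_t=0$), the expected number of such pairs becomes, up to constants,
\[
\theta^2\!\int\!\!\!\int \Phi_t\!\big(\text{(affine function of }a,b,\lambda_t)\big)\,\Phi_t\!\big(\text{(affine function of }a,b,\lambda_t)\big)\,\rd a\,\rd b,
\]
which converges (as $t\to\infty$) to $\theta^2\!\int\!\!\int \Phi(\cdot)\Phi(\cdot)$ over the relevant region --- a \emph{finite} limit. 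The point is that one needs not merely finiteness but a quantitative statement: the extra constraint that the \emph{two} weight values lie within a window of width $O(\lambda_t)$ of \emph{each other} (after accounting for the deterministic drift coming from the different creation times) costs a factor $O(\lambda_t)=O(t^{-\kappa})$. More precisely, after conditioning on one of the two weights, the second must lie in an interval of $\mu$-length $\asymp \lambda_t$ scaled appropriately, and since $\mu$ is absolutely continuous with the regular-variation / exponential-type control furnished by Assumptions~\ref{W},~\ref{F},~\ref{G}, that conditional probability is $O(\lambda_t)$ uniformly over the range of the first weight. This yields a bound of the form $C\lambda_t \cdot(\text{finite integral})=O(t^{-\kappa})$ on the second term --- but this alone is not summable.

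To upgrade $t^{-\kappa}$ to $t^{-2\kappa}$ (so that, with $t_k=k^\eta$, the series $\sum_k t_k^{-2\kappa}=\sum_k k^{-2\kappa\eta}$ converges by the hypothesis $2\kappa\eta>1$), I would exploit that the narrow-window event in fact constrains \emph{both} coordinates: not only must the two exponents be close to each other, but having two tables with exponent above $\ell_\kappa(t)$ is already a rare (probability $O(1)$, but with the second one being a genuine constraint) event, and demanding additionally that their difference is $O(\lambda_t t w_t)$ contributes a further $O(\lambda_t)$. Concretely, one conditions on the table achieving $\Theta^{(1)}(t)$ (or more safely, sums over which table $n$ is the argmax), and then the event ``some other table $m$ has $\Theta_m(t)\in[\Theta^{(1)}(t)-\lambda_t t w_t,\,\Theta^{(1)}(t)]$'' requires $W_m$ to lie in a $\mu$-interval whose length, measured on the $\Phi_t$-scale, is $\asymp\lambda_t$ --- giving one factor $\lambda_t$ --- while the event that the \emph{resulting} $\Theta_m(t)$ nonetheless exceeds $\ell_\kappa(t)-\lambda_t t w_t$ (i.e. that table $m$ is itself a ``large'' table and not merely close to the max in relative terms) pins $W_m$ down to within $O(w_t)$ of $v_t$, contributing the bound $\Phi_t(\cdot)=O(1)$ but combined with integrating the creation time over a window that, together with the weight window, scales like a second factor of $\lambda_t$. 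Making this two-factor bookkeeping precise in each of the three cases --- this is where the cases genuinely diverge, and where Assumption~\ref{G}\ref{G2} (the Gaussian-type two-sided control of $\Phi_t$) and Assumption~\ref{G}\ref{G3} (so that the $x^2/\log t$ error terms, when $x=O(\log t)$ as forced by the range of relevant exponents, do not destroy the estimate) are needed in the Gumbel case, while Assumption~\ref{W} with $\alpha>1$ ensures the relevant integral $\int_0^\infty a\,\Phi(a-\mathrm{const})\,\rd a$ and its perturbations are finite in Weibull --- is the main obstacle. Once the bound $\PP(\Theta^{(1)}(t)-\Theta^{(3)}(t)\leq\lambda_t t w_t)=O(t^{-2\kappa})$ (say, with a constant depending on $\kappa,\eta$ but not $t$) is established, summability along $t_k=k^\eta$ is immediate from $2\kappa\eta>1$, completing the proof.
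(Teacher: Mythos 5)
Your overall architecture matches the paper's: normalise the exponents, split on whether $\Theta^{(1)}(t)$ is atypically small, and bound the remaining event by a Mecke-type computation over the Poisson point process $((\tau_n,W_n))$, aiming for a bound $O(\lambda_t^2 t^\gamma)$ that is summable along $t_k=k^\eta$ because $2\kappa\eta>1$. However, the step where you produce the second factor of $\lambda_t$ is wrong, and this is the heart of the proposition. You try to extract $\lambda_t^2$ from a \emph{single} additional table $m$ close to the maximum, arguing that the weight window contributes one factor $\lambda_t$ and that ``integrating the creation time over a window'' contributes a second. There is no such restriction on the creation time: the constraint $\Theta_m(t)\in[\Theta^{(1)}(t)-\lambda_t tw_t,\,\Theta^{(1)}(t)]$ confines the pair $(\tau_m,W_m)$ to an annulus $A_t(\xi-\lambda_t)\setminus A_t(\xi)$ whose intensity measure is of order $\lambda_t$ (up to sub-polynomial corrections), with the creation time still free to range over an interval of length $\Theta(u_t)$. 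Indeed the event you describe is exactly $\{\xi^{(1)}(t)-\xi^{(2)}(t)\leq\lambda_t\}$, whose probability is genuinely of order $\lambda_t$ and not $\lambda_t^2$ --- this is precisely why the proposition compares $\Theta^{(1)}$ with $\Theta^{(3)}$ rather than with $\Theta^{(2)}$: at transition times the top two tables are comparable. The correct source of $\lambda_t^2$, which the paper uses, is that $\Theta^{(1)}(t)-\Theta^{(3)}(t)\leq\lambda_t tw_t$ forces at least \emph{two} Poisson points into the annulus below the maximum, and $\PP\big(|\Pi(A_t(\xi-\lambda_t))\setminus\Pi(A_t(\xi))|\geq 2\big)\leq \big(\pi(A_t(\xi-\lambda_t))-\pi(A_t(\xi))\big)^2$. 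You state the two-point observation in your opening sentence but then never exploit it: your pair computation anchors the constraints at the fixed level $\ell_\kappa(t)-\lambda_t tw_t$ rather than at the random maximum, which is why it only yields one factor of $\lambda_t$, and your attempted repair does not supply the missing factor.

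A secondary problem: your first term $\PP(\Theta^{(1)}(t)\leq\ell_\kappa(t))$ with $\ell_\kappa(t)=tv_t+\Theta(tw_t)$ does not decay in $t$, however large you take the implicit constant --- by Theorem~\ref{thm:cv_gen} it converges to the strictly positive constant $\PP(\Gamma_\infty(\widehat{\mathcal S}\times[c,\infty])=0)$. The threshold must diverge on the $(\,\cdot-tv_t)/(tw_t)$ scale (the paper uses $-\tfrac12 t^\eps$, $t^{-\eps}$ and $-2\log\log t$ in the Weibull, Fr\'echet and Gumbel cases respectively), and one must then verify that the annulus measure does not grow too fast as the integration variable $\xi$ descends to that threshold; this is exactly where the Potter bounds and Assumption~\ref{G}\ref{G2} enter and where the loss $t^\gamma$, with $\gamma$ taken small enough that $2\kappa\eta-\gamma\eta>1$, comes from.
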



We prove Proposition~\ref{prop:two_tables_summable} in
Section~\ref{sub:bounds_exponents}.
Here is a brief summary of how, in Sections~\ref{sub:interpolation}
to~\ref{sub:proof_main}, we conclude the 
proof of Theorem~\ref{th:two_tables} once Proposition~\ref{prop:two_tables_summable} has been established.  
{We argue in two steps that the fraction $\sum_{j=3}^{M(t)}{Z_{m_j}(t)}/{Z_{m_1}(t)}$ converges to zero
almost surely. First, in Section~\ref{sub:interpolation} we show that it suffices to consider the process at times $(t_k)_{k\geq 1}$, which are sufficiently dense and, by \eqref{eq:theta-sep-sum}, at these times, the largest and third-largest exponents are well-separated.
Second, in Section~\ref{sub:approxi}, we show that $Z_{m_j}(t)$ indeed
grows like $\exp(\Theta^{(j)}(t))$, 
using a large deviations estimate for Yule processes given in Lemma~\ref{lem:Peter}. Therefore the fraction $\sum_{j=3}^{M(t)}{Z_{m_j}(t_k)}/{Z_{m_1}(t_k)}$
is bounded by
$M(t) \exp(\Theta^{(3)}(t_k)-\Theta^{(1)}(t_k))$ and by
\eqref{eq:theta-sep-sum} 
the exponent is smaller than
$-\lambda_{t_k}t_kw_{t_k}$ almost surely for all $k$ large enough.  Finally, in Section~\ref{sub:proof_main},
we deduce the same result for 
\smash{$\sum_{j=3}^{M(t)}{Z_{n_3}(t)}/{Z_{n_1}(t)}$},
where $n_i = n_i(t)$ is the index of the $i$-th largest table at
time~$t$ (which may be different from the index of the~$i$-th largest
exponent).}


\subsection{Potter bounds}
In the proofs, the  following {Potter bounds} for slowly varying
functions  will be useful, see Theorem~1.5.6(i) in \cite{BGT}. 
\begin{itemize}[leftmargin=*]
\item If $L(x)$ is positive and slowly varying as $x\to\oo$, 
then for any $\delta, C_1, C_2>0$, there exists $x_0 = x_0(\delta, C_1, C_2)>0$ such that, 
for all $x\geq x_0$,
\be\label{eq:potter-infty}
C_1 x^{-\delta}\leq L(x) \leq C_2 x^\delta.
\ee
\item If $\ell(x)$ is positive and slowly varying as $x\to0$, 
then, for any $\delta, c_1, c_2>0$, there exists $x_0 = x_0(\delta, c_1, c_2)>0$ such that, 
for all $|x|\leq x_0$,
\be\label{eq:potter-zero}
c_1 x^{\delta}\leq \ell(x) \leq c_2 x^{-\delta}.
\ee
\end{itemize}
Below, we will typically write $L, L_1, L_2, \dotsc$ for functions slowly
varying at infinity and $\ell, \ell_1, \ell_2, \dotsc$ for functions slowly
varying at zero.

\subsection{Proof of Proposition~\ref{prop:two_tables_summable}}\label{sub:bounds_exponents}

To prove \eqref{eq:theta-sep-sum} we consider the following normalised
version of the exponents.   For all 
$t\geq 0$, $1\leq n\leq M(t)$, let
\begin{equation}\label{eq:def_xi}
\xi_n(t) = \frac{W_n(t-\tau_n)-tv_t}{tw_t}
=\frac{\Theta_n(t)-t v_t}{t w_t}
\end{equation}
and introduce also their order statistics,
$\xi^{(1)}(t)\geq \xi^{(2)}(t)\geq \xi^{(3)}(t)\geq \ldots$
We aim  to find an upper
bound for $\mathbb P(\xi^{(1)}(t)-\xi^{(3)}(t)\leq \lambda_t)$.
Note that the $\xi_n(t)$ are all negative in the
Weibull case (since $v_t=1$), all positive in the Fr\'echet case
(since $v_t=0$), and can be either positive or negative in the
Gumbel case.   \smallskip

For all $t\geq 0$ and $x\in\mathbb R$, we let
\begin{equation}\label{eq:defA}
A_t(x) =\big\{(s,w)\in[0,t]\times[0,M): w(t-s) > tv_t + x t w_t\big\}.
\end{equation}
Then the event that $\xi_n(t) > x$ is the same as the event that
$(\tau_n,W_n)\in A_t(x)$.   We let 
$\Pi := ((\tau_n,W_n))_{n\geq 1}$, which
is a Poisson point process on 
$[0,\oo)\times[0,M)$.   We write
$\pi:= \theta\mathrm ds \otimes \mathrm d\mu$ 
for the intensity measure of $\Pi$.
\smallskip

Recall from \eqref{eq:Phi_t-def} that $\Phi_t(x)=u_t \mu(v_t+x w_t,M)$, which is non-increasing in~$x$.

\begin{lemma}\label{lem:piA}
Let $x> -v_t/w_t$.  Then
\be\label{eq:pi-A}
\pi(A_t(x))=
\theta(v_t+x w_t)\frac{t w_t}{u_t}
\int_x^\oo
\frac{\Phi_t(z)}{(v_t+z w_t)^2}\, \dd z, 
\ee
and for $\eps>0$, 
\be\label{eq:pi-A-diff}
0\leq \pi(A_t(x)) -  \pi(A_t(x+\eps))
\leq
\eps \theta \frac{tw_t}{u_t}
\frac{\Phi_t(x) }{v_t+x w_t}.
\ee
\end{lemma}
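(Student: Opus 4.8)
The plan is to compute $\pi(A_t(x))$ directly from the definition of $A_t(x)$ in \eqref{eq:defA} and the product form of the intensity measure $\pi = \theta\,\mathrm ds\otimes\mathrm d\mu$, then derive the difference bound \eqref{eq:pi-A-diff} either by differentiating \eqref{eq:pi-A} in $x$ or, more cleanly, by directly estimating $\pi(A_t(x)\setminus A_t(x+\eps))$.

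For the first identity, I would start from
\[
\pi(A_t(x)) = \theta\int_0^t \mathrm ds\,\mu\Big(\big\{w\in[0,M): w(t-s) > tv_t + x tw_t\big\}\Big)
= \theta\int_0^t \mathrm ds\,\mu\Big(\tfrac{tv_t+xtw_t}{t-s},\,M\Big),
\]
valid for $s<t$ (when $s=t$ the inner set is empty since $tv_t+xtw_t>0$ as $x>-v_t/w_t$). The natural substitution is to change variables so that the argument of $\mu$ matches the form $v_t + zw_t$ appearing in $\Phi_t$. Setting $v_t + zw_t = \frac{t(v_t+xw_t)}{t-s}$, i.e. solving for $s$ in terms of $z$, one gets $t-s = \frac{t(v_t+xw_t)}{v_t+zw_t}$, hence $s = t\big(1 - \frac{v_t+xw_t}{v_t+zw_t}\big)$ and $\mathrm ds = \frac{t(v_t+xw_t)w_t}{(v_t+zw_t)^2}\,\mathrm dz$. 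As $s$ ranges over $[0,t)$, $z$ ranges over $[x,\infty)$ (when $s=0$, $z=x$; as $s\uparrow t$, $z\to\infty$). Substituting and using $\Phi_t(z) = u_t\mu(v_t+zw_t,M)$, i.e. $\mu(v_t+zw_t,M) = \Phi_t(z)/u_t$, gives
\[
\pi(A_t(x)) = \theta \int_x^\infty \frac{\Phi_t(z)}{u_t}\cdot \frac{t(v_t+xw_t)w_t}{(v_t+zw_t)^2}\,\mathrm dz
= \theta(v_t+xw_t)\frac{tw_t}{u_t}\int_x^\infty \frac{\Phi_t(z)}{(v_t+zw_t)^2}\,\mathrm dz,
\]
which is \eqref{eq:pi-A}. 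One should double-check that the integral converges at infinity: since $\Phi_t$ is non-increasing and $\Phi_t(z)\le \Phi_t(x)$ on $[x,\infty)$, the integrand is bounded by $\Phi_t(x)/(v_t+zw_t)^2$, whose integral is $\Phi_t(x)/(w_t(v_t+xw_t))$, finite.

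For \eqref{eq:pi-A-diff}: the lower bound $\pi(A_t(x))\ge \pi(A_t(x+\eps))$ is immediate from the monotonicity of the family $\{A_t(x)\}_x$ in $x$ (increasing $x$ shrinks the set). For the upper bound I would write $\pi(A_t(x)) - \pi(A_t(x+\eps)) = \pi(A_t(x)\setminus A_t(x+\eps))$ and express this as
\[
\theta\int_0^t \mathrm ds\,\mu\Big(\tfrac{t(v_t+xw_t)}{t-s},\,\tfrac{t(v_t+(x+\eps)w_t)}{t-s}\Big],
\]
then apply the same change of variables as above, which turns the $s$-integral over the relevant range into an integral of $\frac{\Phi_t(z)}{u_t}\cdot\frac{t(v_t+xw_t)w_t}{(v_t+zw_t)^2}$ over $z\in[x,x+\eps]$ (the upper endpoint in $z$ corresponds to where the inner $\mu$-interval first becomes empty, which is exactly $z = x+\eps$ after one checks the correspondence between the two boundary curves). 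Bounding $\Phi_t(z)\le\Phi_t(x)$ and $v_t+zw_t \ge v_t+xw_t$ on this interval, and noting $\frac{v_t+xw_t}{(v_t+zw_t)^2}\le \frac{1}{v_t+xw_t}$, gives
\[
\pi(A_t(x)\setminus A_t(x+\eps)) \le \theta\,\frac{tw_t}{u_t}\,\frac{\Phi_t(x)}{v_t+xw_t}\int_x^{x+\eps}\mathrm dz = \eps\,\theta\,\frac{tw_t}{u_t}\,\frac{\Phi_t(x)}{v_t+xw_t},
\]
as claimed. Alternatively one can differentiate \eqref{eq:pi-A} under the integral sign; the derivative of the product $(v_t+xw_t)\int_x^\infty\frac{\Phi_t(z)}{(v_t+zw_t)^2}\,\mathrm dz$ in $x$ has two terms, one of which is manifestly negative ($-w_t^2\Phi_t(x)/(v_t+xw_t)$, from differentiating the upper-argument dependence is actually the $-\Phi_t(x)/(v_t+xw_t)$ term via the lower limit of integration) and the other of which, $w_t\int_x^\infty\frac{\Phi_t(z)}{(v_t+zw_t)^2}\,\mathrm dz$, is bounded using $\Phi_t(z)\le\Phi_t(x)$ by $\frac{\Phi_t(x)}{v_t+xw_t}$, so $|\tfrac{\mathrm d}{\mathrm dx}\pi(A_t(x))|\le \theta\frac{tw_t}{u_t}\cdot 2\cdot\frac{\Phi_t(x)}{v_t+xw_t}$ modulo constant-tracking; the direct set-difference computation is cleaner and avoids worrying about the factor. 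The main (minor) obstacle is simply keeping the change of variables and the endpoint correspondences straight, and confirming that the $z$-integral in \eqref{eq:pi-A} genuinely converges — there is no serious difficulty here, the result is essentially a bookkeeping computation with the Poisson intensity.
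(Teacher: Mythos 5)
Your derivation of \eqref{eq:pi-A} is exactly the paper's: integrate out $w$ to get $\theta\int_0^t \mu\big(\tfrac{t(v_t+xw_t)}{t-s},M\big)\,\mathrm ds$, then substitute $v_t+zw_t=\tfrac{t(v_t+xw_t)}{t-s}$. For \eqref{eq:pi-A-diff} the paper takes the difference of two instances of \eqref{eq:pi-A}, discards the manifestly non-positive cross term $-\eps w_t\,\theta\tfrac{tw_t}{u_t}\int_{x+\eps}^\infty \Phi_t(z)(v_t+zw_t)^{-2}\,\mathrm dz$, and then uses that the integrand is non-increasing on $(x,\infty)$ — the same two monotonicity facts you invoke. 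The one inaccuracy in your set-difference route is the claimed exact identity: under your substitution the upper boundary curve $\tfrac{t(v_t+(x+\eps)w_t)}{t-s}$ maps to $\tfrac{v_t+(x+\eps)w_t}{v_t+xw_t}(v_t+zw_t)$, which is \emph{not} $v_t+(z+\eps)w_t$ for $z>x$, so $\pi(A_t(x)\setminus A_t(x+\eps))$ does not equal $\theta(v_t+xw_t)\tfrac{tw_t}{u_t}\int_x^{x+\eps}\Phi_t(z)(v_t+zw_t)^{-2}\,\mathrm dz$; it equals that quantity minus the non-negative cross term above. Since the discrepancy goes in the favourable direction, your final upper bound survives, but the honest justification of the intermediate step is the paper's subtract-and-discard argument rather than an exact change-of-variables identity for the strip. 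Your convergence check at infinity and the remark that the $x>-v_t/w_t$ hypothesis is what makes the integrand monotone are both correct and worth keeping.
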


\begin{proof}
For \eqref{eq:pi-A} we use that
\ban
\pi(A_t(x))
&= \int_0^t \int_0^\infty \theta \, \mathrm ds\mathrm d\mu(w) \bs 1_{(s,w)\in A_t(x)}
= \int_0^t \theta\,\mathrm ds \mu\Big(\frac{tv_t+xtw_t}{t-s}, M\Big)\notag\\
&= \theta (v_t+xw_t)\int_{x}^\infty  
\frac{tw_t\mu(v_t+zw_t,  M)}{(v_t+zw_t)^2}\,\mathrm dz \\
&=\theta(v_t+x w_t)\tfrac{t w_t}{u_t}
\int_x^\oo
\frac{\Phi_t(z)}{(v_t+z w_t)^2}\, \dd z,
\ean
where we used the change of variable
\smash{$\tfrac{tv_t+xtw_t}{t-s} = v_t+zw_t$,} 
to go from $s$ to $z$.
For \eqref{eq:pi-A-diff}, using 
$A_t(x+\eps)\subseteq A_t(x)$ and \eqref{eq:pi-A}, discarding a term which
is $\leq0$, we get
\be
0\leq \pi(A_t(x)) -  \pi(A_t(x+\eps))
\leq\theta(v_t+x w_t)\tfrac{t w_t}{u_t}
\int_x^{x+\eps}
\frac{\Phi_t(z)}{(v_t+z w_t)^2}\, \dd z.
\ee
We then use the fact that 
the integrand is non-increasing in $z$ (because $z\geq x>-v_t/w_t$) to get the result.
\end{proof}

\begin{lemma}\label{lem:largest-xi}
Under Assumption~\ref{H} we have the following bounds.
\begin{itemize}[leftmargin=*] 
\item In the Weibull case, 
let $x_t>0$ such that $x_t w_t\to0$,
{\cec for any $\delta, C>0$, for all $t$ large enough,}
\be\label{eq:pi-A-W}
\pi(A_t(-x_t))
\geq C x_t^{1+\alpha+\delta} 
t^{-\delta(1+\alpha+\delta+\frac1{1+\alpha})},
\ee
and, whenever $\xi-\lambda\geq -x_t$, we have
\be\label{eq:pi-A-diff-W}
\pi(A_t(\xi-\lambda))-\pi(A_t(\xi))
\leq C\lambda x_t^{\alpha-\delta} 
t^{\delta(1+\alpha+\delta+\frac1{1+\alpha})}.
\ee
\item In the Fr\'echet case, let $x_t>0$ such that $x_tw_t\to\oo$,
{\cec for any $\delta, C>0$, for all $t$ large enough,}
\be\label{eq:pi-A-F}
\pi(A_t(x_t))
\geq C x_t^{-(\alpha+\delta)} 
t^{-\delta(\alpha+\frac1\alpha+\delta)},
\ee
and, whenever $\xi-\lambda\geq x_t$, we have
\be\label{eq:pi-A-diff-F}
\pi(A_t(\xi-\lambda))-\pi(A_t(\xi))
\leq C \lambda x_t^{-(1+\alpha-\delta)} 
t^{\delta(1/\alpha+\alpha-\delta)}.
\ee
\item In the Gumbel case, 
let $x_t>0$ such that $x_t w_t/v_t\to 0$,
if $M=1$ then $\frac{x_t w_t}{1-v_t}\to0$,
\be\label{eq:pi-A-G}
\pi(A_t(-x_t))
\geq \big(\theta+o(1)\big) \int_{-x_t}^{x_t} \Phi_t(z) \dd z,
\ee
as $t\uparrow\infty$,
and, whenever $\xi-\lambda\geq -x_t$, we have
\be\label{eq:pi-A-diff-G}
\pi(A_t(\xi-\lambda))-\pi(A_t(\xi))
\leq C \lambda \Phi_t(-x_t).
\ee
\end{itemize}
\end{lemma}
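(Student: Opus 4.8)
The idea is to feed the two formulas from Lemma~\ref{lem:piA} into the explicit asymptotics of $\Phi_t$, $u_t$, $v_t$, $w_t$ recorded in Table~\ref{cheatsheet} and Lemma~\ref{lem:ut}, controlling slowly varying factors by the Potter bounds \eqref{eq:potter-infty}--\eqref{eq:potter-zero}. In all three cases the lower bound \eqref{eq:pi-A}/\eqref{eq:pi-A-G} and the increment bound \eqref{eq:pi-A-diff} are the starting points; the only work is to estimate the integrals $\int_x^\infty \Phi_t(z)/(v_t+zw_t)^2\,\dd z$ and the prefactors.

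\emph{Weibull case.} Here $v_t=1$, $u_t$ is regularly varying of index $\frac{\alpha}{\alpha+1}$, $w_t$ of index $-\frac1{\alpha+1}$, and $\Phi(x)=|x|^\alpha\bbone_{x<0}$; moreover $\mu(1-x,1)=x^\alpha\ell(x)$ so $\Phi_t(z)=u_t\mu(1+zw_t,1)=u_t(-zw_t)^\alpha\ell(-zw_t)$ for $z<0$. For the lower bound \eqref{eq:pi-A-W} I take $x=-x_t$ with $x_t>0$, $x_tw_t\to0$, and in \eqref{eq:pi-A} restrict the $z$-integral to $[-x_t,-x_t/2]$, say, where $v_t+zw_t=1+zw_t$ is bounded away from $0$ and $\infty$; this gives $\pi(A_t(-x_t))\gtrsim \frac{tw_t}{u_t} x_t \cdot u_t w_t^\alpha x_t^\alpha \ell(x_t w_t)= tw_t^{1+\alpha}x_t^{1+\alpha}\ell(x_tw_t)$. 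Using $tw_t=u_tv_t=u_t$ and the regular variation of $u_t,w_t$, the power of $t$ is $\frac{\alpha}{\alpha+1}+\alpha\cdot(-\frac1{\alpha+1})=0$, so up to slowly varying factors this is $x_t^{1+\alpha}$; the Potter bound \eqref{eq:potter-zero} absorbs the slowly varying pieces into $x_t^{\pm\delta}$ and $t^{\pm\delta(\cdots)}$, yielding \eqref{eq:pi-A-W}. For \eqref{eq:pi-A-diff-W} I apply \eqref{eq:pi-A-diff} with $x=\xi-\lambda\geq-x_t$: the factor $\frac{tw_t}{u_t}\cdot\frac{\Phi_t(\xi-\lambda)}{v_t+(\xi-\lambda)w_t}$ is bounded, since $v_t+(\xi-\lambda)w_t$ is bounded below (as $x_tw_t\to0$) and $\Phi_t$ is non-increasing so $\Phi_t(\xi-\lambda)\le\Phi_t(-x_t)\asymp u_t(x_tw_t)^\alpha\ell(x_tw_t)$; again $\frac{tw_t}{u_t}\cdot u_t w_t^\alpha = tw_t^{1+\alpha}\asymp t^0$ up to slow variation, and Potter gives the stated power.

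\emph{Fréchet case.} Now $v_t=0$, $u_t=t$, $w_t=t^{1/\alpha}L_3(t)$, $\Phi(x)=x^{-\alpha}\bbone_{x>0}$, and $\mu(x,\infty)=x^{-\alpha}L(x)$, so $\Phi_t(z)=u_t\mu(zw_t,\infty)=t(zw_t)^{-\alpha}L(zw_t)$ for $z>0$, and $v_t+zw_t=zw_t$. Then \eqref{eq:pi-A} with $x=x_t$ ($x_tw_t\to\infty$) becomes $\pi(A_t(x_t))=\theta x_tw_t\cdot\frac{tw_t}{t}\int_{x_t}^\infty \frac{t(zw_t)^{-\alpha}L(zw_t)}{(zw_t)^2}\,\dd z = \theta t w_t^{-\alpha}x_t^{-\alpha}\cdot(\text{const}\cdot L(x_tw_t)(1+o(1)))$ by Karamata, since the integrand is regularly varying of index $-\alpha-2<-1$; and $tw_t^{-\alpha}\asymp t\cdot t^{-1}=1$ up to slow variation, so this is $\asymp x_t^{-\alpha}$, and Potter \eqref{eq:potter-infty} applied to $L(x_tw_t)$ gives \eqref{eq:pi-A-F}. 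For \eqref{eq:pi-A-diff-F}, \eqref{eq:pi-A-diff} gives $\lesssim \lambda\frac{tw_t}{t}\frac{\Phi_t(\xi-\lambda)}{(\xi-\lambda)w_t}\le\lambda\frac{\Phi_t(x_t)}{x_t}\asymp \lambda \cdot t w_t^{-\alpha}x_t^{-\alpha-1}L(x_tw_t)\asymp \lambda x_t^{-(1+\alpha)}$ up to slow variation, hence the claim after Potter.

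\emph{Gumbel case.} Here $\Phi(x)=\e{-x}$, and $v_t,w_t$ are slowly varying with $v_t$ bounded away from $0$; under Assumption~\ref{G}\ref{G2}, $\Phi_t(x)\ge\e{-x-c_1x^2/\log t}$ on $(-c_2\log t,c_2\log t)$. Taking $x_t>0$ with $x_tw_t/v_t\to0$ (so that $v_t+zw_t=v_t(1+o(1))$ uniformly on $[-x_t,x_t]$), \eqref{eq:pi-A} gives $\pi(A_t(-x_t))=\theta(v_t-x_tw_t)\frac{tw_t}{u_t}\int_{-x_t}^\infty\frac{\Phi_t(z)}{(v_t+zw_t)^2}\,\dd z \ge (\theta+o(1))\frac{tw_t}{u_tv_t}\int_{-x_t}^{x_t}\Phi_t(z)\,\dd z$, and $\frac{tw_t}{u_tv_t}=1$ exactly by the defining relation $u_tv_t=tw_t$, giving \eqref{eq:pi-A-G}. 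For \eqref{eq:pi-A-diff-G}, \eqref{eq:pi-A-diff} gives $\lesssim\lambda\frac{tw_t}{u_t}\frac{\Phi_t(\xi-\lambda)}{v_t+(\xi-\lambda)w_t}\le\lambda\frac{tw_t}{u_tv_t}(1+o(1))\Phi_t(-x_t)=(1+o(1))\lambda\Phi_t(-x_t)$, since $\Phi_t$ is non-increasing and $v_t+(\xi-\lambda)w_t\ge v_t-x_tw_t=v_t(1+o(1))$.

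The main obstacle is bookkeeping rather than conceptual: keeping track of all the slowly varying factors ($u_t$, $w_t$, $\ell$, $L$, $L_3$) through the integrals and prefactors, and checking that in each case the powers of $t$ cancel up to an arbitrarily small $\delta$-loss. The Karamata-type tail estimate in the Fréchet case and the application of Assumption~\ref{G}\ref{G2} to control $\int_{-x_t}^{x_t}\Phi_t$ from below by roughly $\int_{-x_t}^{x_t}\e{-z}\,\dd z$ in the Gumbel case are the two places requiring a little care. In the Weibull case one must verify that $x_tw_t\to0$ is enough to keep $v_t+zw_t$ bounded away from $0$ on the relevant range, which is immediate.
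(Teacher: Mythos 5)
Your proposal is correct and follows essentially the same route as the paper: both start from Lemma~\ref{lem:piA}, substitute the regular-variation asymptotics of $\Phi_t$, $u_t$, $v_t$, $w_t$ from Table~\ref{cheatsheet}, and absorb the slowly varying factors via Potter bounds (your Karamata tail estimate in the Fr\'echet case and your restriction of the integral to $[-x_t,-x_t/2]$ in the Weibull case are harmless cosmetic variants of the paper's pointwise Potter bound followed by integration of the resulting power). One small remark: Assumption~\ref{G}\ref{G2} plays no role in this lemma --- the Gumbel bounds here hold under Assumption~\ref{H} alone, and the lower bound $\Phi_t(z)\geq\e{-z-c_1z^2/\log t}$ is only invoked downstream, in Lemma~\ref{lem:two-tables-bound}.
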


\begin{proof}
We argue  separately for the three
cases.  It is helpful to refer to Table~\ref{cheatsheet}.
\begin{itemize}[leftmargin=*]
\item In the Weibull case, $M=1$ and $v_t=1$.  Then 
$\Phi_t(z)=0$ as soon as $z\geq 0$.  Also $u_t=tw_t$.
For any $-1/w_t<x<0$, by~\eqref{eq:pi-A},
\be
\pi(A_t(x))=\theta(1+x w_t)\int_x^0 \frac{\Phi_t(z)}{(1+zw_t)^2} \dd z
\geq \theta(1+x w_t)\int_x^0 \Phi_t(z) \dd z.
\ee
Replacing $x$ with $-x_t$ 
and using  that $1-x_tw_t=1+o(1)$
we get
\be
\pi(A_t(-x_t))\geq (\theta+o(1))  \int_0^{x_t} \Phi_t(-z) \dd z.
\ee
Now recall that $\mu(1-\eps,1)=\eps^{\alpha} \ell(\eps)$,
$u_t=t^{\frac{\alpha}{1+\alpha}} L(t)$ and 
$w_t=t^{-\frac{1}{1+\alpha}} L(t)$ (see Assumption~\ref{W} and Lemma~\ref{lem:ut}). 
This, combined with
the Potter bounds~\eqref{eq:potter-infty} and \eqref{eq:potter-zero}, gives
for $0\leq z\leq x_t$ 
\be\begin{split}
\Phi_t(-z)
&=z^\alpha L(t)^{1+\alpha}  \ell(z w_t)
\geq C_1 z^\alpha t^{-\delta(1+\alpha)}
(zw_t)^{\delta} \\
&\geq C_2 z^{\alpha+\delta}
t^{-\delta(1+\alpha+\delta+\frac1{1+\alpha})}.
\end{split}\ee
Then 
\be
\pi(A_t(-x_t))\geq C_2(\theta+o(1)) 
t^{-\delta(1+\alpha+\delta-\frac1{1+\alpha})}
\int_0^{x_t} z^{\alpha+\delta} \dd z,
\ee
so \eqref{eq:pi-A-W} follows.  For \eqref{eq:pi-A-diff-W},
we have from  \eqref{eq:pi-A-diff} that
\be\begin{split}
\pi(A_t(\xi-\lambda))-\pi(A_t(\xi))
&\leq \theta \lambda \frac{\Phi_t(\xi-\lambda)}{1+(\xi-\lambda)w_t}
\leq \theta \lambda \frac{\Phi_t(-x_t)}{1-x_tw_t}\\
&=(\lambda\theta +o(1)) \Phi_t(-x_t).
\end{split}\ee
Similarly to~\eqref{eq:similar}, the Potter bounds~\eqref{eq:potter-infty} give
\be
\Phi_t(-x_t)\leq C_3 x_t^{\alpha-\delta} 
t^{\delta(1+\alpha-\delta+\frac1{1+\alpha})},
\ee
which gives \eqref{eq:pi-A-diff-W}.
\item In the Fr\'echet case,
$M=\infty$,
$u_t=t$, $v_t=0$ and  $w_t=t^{\frac1\alpha}L(t)$, so
for $x>0$, 
\eqref{eq:pi-A} simplifies to
\be\label{eq:Fr_piA}
\pi(A_t(x))
= \theta  x \int_x^{\infty} \frac{\Phi_t(z)}{z^2} \dd x.
\ee
Moreover, $\mu(x,\infty)=x^{-\alpha}L_1(x)$.
Using the Potter bounds
\eqref{eq:potter-infty} 
we get, for any $\delta>0$, $z\geq x_t$, and $t$ large enough,
\be\begin{split}
\Phi_t(z)&=t(zw_t)^{-\alpha} L_1(zw_t)
\geq C_1 t (zw_t)^{-\alpha-\delta}
= C_1 z^{-\alpha-\delta} t^{-\delta/\alpha}L(t)^{-\alpha-\delta}\\
&\geq C_2 z^{-\alpha-\delta} 
t^{-\delta/\alpha-\delta(\alpha+\delta)}.
\end{split}\label{eq:similar}
\ee
Then 
\be\begin{split}
\pi(A_t(x_t))
&\geq   C_2(\theta+o(1))  x_t
t^{-\delta/\alpha-\delta(\alpha+\delta)}
\int_{x_t}^{\infty} z^{-(2+\alpha+\delta)} \dd u \\
&\geq C_3 t^{-\delta/\alpha-\delta(\alpha+\delta)}
x_t^{-(\alpha+\delta)},
\end{split}\ee
as claimed in \eqref{eq:pi-A-F}.  Next, from \eqref{eq:pi-A-diff}
we have 
\be
\pi(A_t(\xi-\lambda))-\pi(A_t(\xi)) \leq
\lambda \theta \frac{\Phi_t(\xi-\lambda)}{\xi-\lambda}
\leq 
\lambda \theta \frac{\Phi_t(x_t)}{x_t}.
\ee
Similarly to~\eqref{eq:similar}, using the Potter bounds
\eqref{eq:potter-infty} 
\be
\Phi_t(x_t)\leq C_4 x_t^{-\alpha+\delta} 
t^{\delta(1/\alpha+\alpha-\delta)},
\ee
which gives \eqref{eq:pi-A-diff-F}.\pagebreak[3]

\item In the Gumbel case, we use that $u_tv_t=tw_t$ and that
$\Phi_t(z)=0$ if $z\geq (M-v_t)/w_t$ to see that, 
by~\eqref{eq:pi-A}, for any $0<x<\tfrac{v_t}{w_t}$,
\be \label{eq:G_piAtx}
\pi(A_t(-x))
=\theta  (1-x\tfrac{w_t}{v_t})
\int_{-x}^{(M-v_t)/w_t} \frac{\Phi_t(z)}{(1+z\tfrac{w_t}{v_t})^2} 
\mathrm dz. 
\ee
We now note that $x_t\leq (M-v_t)/w_t$ for all $t$ large enough.
This is immediate if $M=\oo$, while 
if $M=1$ then it follows from the assumption 
$x_tw_t/(1-v_t)\to0$.
Since the integrand in \eqref{eq:G_piAtx}
is non-negative, we  get that, as $t\uparrow\infty$,
\be
\pi(A_t(-x_t))
\geq \frac{\theta(1-x_tw_t/v_t)}{(1+x_t{w_t}/{v_t})^2}
\int_{-x_t}^{x_t} \Phi_t(z) \mathrm dz
\geq \big(\theta+o(1)\big) \int_{-x_t}^{x_t} \Phi_t(z) \mathrm dz,
\ee
because $(1+z\tfrac{w_t}{v_t})^{-2}\geq (1+x_t\tfrac{w_t}{v_t})^{-2}$ for all $z\leq x_t$, and because
$x_t = o(v_t/w_t)$ as $t\uparrow\infty$. 
Next, from \eqref{eq:pi-A-diff},
we get that, for all $\xi$ and $\lambda$ such that $\xi-\lambda\geq -x_t$,
\be\begin{split}
\pi(A_t(\xi-\lambda))-\pi(A_t(\xi)) &\leq
\lambda \theta \frac{\Phi_t(\xi-\lambda)}{1+(\xi-\lambda)\frac{w_t}{v_t}}
\leq 
\lambda \theta
\frac{\Phi_t(-x_t)}{1-x_t\frac{w_t}{v_t}}\\
&\leq
\lambda (\theta+o(1)) \Phi_t(-x_t),
\end{split}\ee
as $t\uparrow\infty$,
as required for \eqref{eq:pi-A-diff-G}.
\qedhere
\end{itemize}
\end{proof}

Using Lemma~\ref{lem:largest-xi}, we deduce the following
key estimates on $\xi^{(1)}(t)-\xi^{(3)}(t)$.

\begin{lemma}\label{lem:two-tables-bound}
Under Assumption~\ref{H}  for the Weibull and Fr\'echet cases,
and Assumption~\ref{G} for the Gumbel case,
let $\lambda_t=t^{-\kappa}$ where $\kappa>0$.  Then
for any $\gamma, C>0$, for all $t$ large enough,
\be\label{eq:xi31-bd}
\mathbb P\big(\xi^{(1)}(t)-\xi^{(3)}(t)\leq \lambda_t\big)
\leq C t^\gamma\lambda_t^2.
\ee
\end{lemma}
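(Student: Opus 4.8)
The plan is to recast the statement through the Poisson point process $\Pi=((\tau_n,W_n))_{n\ge 1}$ of intensity $\pi=\theta\,\mathrm ds\otimes\mathrm d\mu$ and the sets $A_t(x)$ of \eqref{eq:defA}, using that $\{\xi_n(t)>x\}=\{(\tau_n,W_n)\in A_t(x)\}$ for $\xi_n(t)$ as in \eqref{eq:def_xi}. Set $N_t(x):=\Pi(A_t(x))$, a Poisson variable of mean $\pi(A_t(x))$; both $x\mapsto N_t(x)$ and $x\mapsto\pi(A_t(x))$ are non-increasing, and $\{\xi^{(j)}(t)>x\}=\{N_t(x)\ge j\}$. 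The combinatorial heart of the proof is a covering by a mesh $a_j:=j\lambda_t$, $j\in\mathbb Z$: if $\xi^{(1)}(t)-\xi^{(3)}(t)\le\lambda_t$ and $j$ is chosen with $\xi^{(1)}(t)\in[a_j,a_{j+1})$, then no exponent exceeds $a_{j+1}$ while $\xi^{(1)},\xi^{(2)},\xi^{(3)}$ all lie in $[a_{j-1},a_{j+1})$, whence
\[\big\{\xi^{(1)}(t)-\xi^{(3)}(t)\le\lambda_t\big\}\subseteq\bigcup_{j}\big\{N_t(a_{j+1})=0,\ N_t(a_{j-1})\ge 3\big\}.\]

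To restrict this union to a controlled range of $j$, I would first discard the unlikely event that $\xi^{(1)}(t)$ is small. Using the lower bounds on $\pi(A_t(\cdot))$ in Lemma~\ref{lem:largest-xi}, fix a threshold $x_t$, with $x_t^\ast:=-x_t$ in the Weibull and Gumbel cases and $x_t^\ast:=x_t$ in the Fr\'echet case, so that $\pi(A_t(x_t^\ast))\to\infty$ faster than any power of $\log t$: one takes $x_t=t^{\epsilon}$ (Weibull) and $x_t=t^{-\epsilon}$ (Fr\'echet) with $\epsilon=\epsilon(\gamma)>0$ small, so that \eqref{eq:pi-A-W} and \eqref{eq:pi-A-F} yield $\pi(A_t(x_t^\ast))\ge C t^{c}$ with $c>0$ once the Potter exponents obey $\delta\ll\epsilon\ll1$; in the Gumbel case one takes $x_t=C\log\log t$ for a fixed $C>1$ (see the last paragraph). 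Then $\mathbb P(\xi^{(1)}(t)<x_t^\ast)=\mathrm e^{-\pi(A_t(x_t^\ast))}$ is negligible against $t^\gamma\lambda_t^2$, and on its complement the covering union above involves only the $j$ with $a_{j+1}>x_t^\ast$.

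For this restricted union, a union bound followed by the independence of the restrictions of $\Pi$ to the disjoint sets $A_t(a_{j+1})$ and $A_t(a_{j-1})\setminus A_t(a_{j+1})$ gives
\[\mathbb P\big(N_t(a_{j+1})=0,\ N_t(a_{j-1})\ge 3\big)=\mathrm e^{-g_j}\,\mathbb P\big(\mathrm{Pois}(\Delta_j)\ge 3\big)\le\tfrac16\,\mathrm e^{-g_j}\,\Delta_j^3,\]
with $g_j:=\pi(A_t(a_{j+1}))$, $\Delta_j:=\pi(A_t(a_{j-1}))-\pi(A_t(a_{j+1}))$ (using $\mathbb P(\mathrm{Pois}(\mu)\ge k)\le\mu^k/k!$). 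Over the range $a_{j+1}>x_t^\ast$ the difference bounds \eqref{eq:pi-A-diff-W}, \eqref{eq:pi-A-diff-F} and \eqref{eq:pi-A-diff-G} give $\Delta_j\le D_t$, where $D_t$ is $2\lambda_t$ times a polynomial (respectively polylogarithmic) factor, and the threshold has been calibrated so that $D_t\to0$ and $D_t^2\le t^{\gamma/2}\lambda_t^2$. Hence $\sum_j\mathrm e^{-g_j}\Delta_j^3\le D_t^2\sum_j\mathrm e^{-g_j}\Delta_j$, and writing each $\Delta_j$ as a sum of two successive decrements of the non-increasing function $a\mapsto\pi(A_t(a))$, each bounded by $D_t$, and comparing with $\int_0^\infty\mathrm e^{-u}\,\mathrm du$, one gets $\sum_j\mathrm e^{-g_j}\Delta_j\le 2\mathrm e^{2D_t}\le 3$ for $t$ large. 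Collecting everything, $\mathbb P(\xi^{(1)}(t)-\xi^{(3)}(t)\le\lambda_t)\le\mathrm e^{-\pi(A_t(x_t^\ast))}+\tfrac12 D_t^2\le t^\gamma\lambda_t^2$ for all large $t$; the version with an arbitrary prefactor $C$ follows by running the argument with $\gamma/2$ in place of $\gamma$.

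The delicate point is the Gumbel case. There \eqref{eq:pi-A-G} is available only while $x_tw_t/v_t=x_tL_1(t)\to0$, which limits how fast $x_t$ may grow, yet one still needs $\mathrm e^{-\pi(A_t(-x_t))}$ to beat $t^\gamma\lambda_t^2$; Assumption~\ref{G} is exactly what reconciles the two. By part~\ref{G3}, $x_t=C\log\log t$ with $C>1$ fixed satisfies $x_tL_1(t)=C(\log\log t)L_1(t)\to0$; by part~\ref{G2}, the Gaussian correction is negligible on this scale, so $\Phi_t(\mp x_t)=\mathrm e^{\pm x_t+o(x_t)}=(\log t)^{\pm C+o(1)}$, whence $\pi(A_t(-x_t))\ge(\theta+o(1))\int_{-x_t}^{0}\Phi_t(z)\,\mathrm dz\gtrsim(\log t)^{C}$, which outgrows every power of $\log t$ for $C>1$, while the factor $\Phi_t(-x_t)$ entering $D_t$ is merely polylogarithmic. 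One should also check that the other hypotheses on $x_t$ required by Lemma~\ref{lem:largest-xi} (including $x_tw_t/(1-v_t)\to0$ when $M=1$) hold for this slowly growing $x_t$, which again follows from Assumption~\ref{G}.
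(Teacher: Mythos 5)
Your proposal is correct and shares the paper's skeleton --- the same decomposition into the rare event $\{\xi^{(1)}(t)\leq y_t\}$ (killed by the lower bounds \eqref{eq:pi-A-W}, \eqref{eq:pi-A-F}, \eqref{eq:pi-A-G} of Lemma~\ref{lem:largest-xi}) plus a main term controlled by the increment bounds \eqref{eq:pi-A-diff-W}, \eqref{eq:pi-A-diff-F}, \eqref{eq:pi-A-diff-G}, with the same choices of threshold ($x_t=t^{\eps}$, $t^{-\eps}$, $\mathrm{const}\cdot\log\log t$ in the three cases) --- but it implements the main term differently. The paper applies Mecke's formula to the Poisson process $(\xi_n(t))_n$, integrating over the location $\xi$ of the maximum and bounding $\mathbb P(|\Pi(A_t(\xi-\lambda_t))\setminus\Pi(A_t(\xi))|\geq 2)$ by the squared increment, then using that $\int\mathrm d\rho_t(\xi)\,\mathbb P(\Pi(A_t(\xi))=\varnothing)\leq 1$; you instead discretize on a mesh of width $\lambda_t$, use independence of the Poisson counts on $A_t(a_{j+1})$ and $A_t(a_{j-1})\setminus A_t(a_{j+1})$ together with $\mathbb P(\mathrm{Pois}(\mu)\geq 3)\leq \mu^3/6$, and resum via a telescoping comparison with $\int_0^\infty \mathrm e^{-u}\,\mathrm du$. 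Both yield the factor $\lambda_t^2$ times a sub-polynomial correction; your route is more elementary (no Mecke/Palm calculus) at the cost of the mesh bookkeeping, while the paper's integral identity makes the ``one point at $\xi$, none above, two more within $\lambda_t$'' picture exact in one line.

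Two small points you should tighten, neither fatal. First, restricting the union to $a_{j+1}>x_t^\ast$ does not guarantee $a_{j-1}\geq x_t^\ast$, which is what the increment bounds of Lemma~\ref{lem:largest-xi} require; shift the threshold (e.g.\ discard $\{\xi^{(1)}(t)<x_t^\ast+2\lambda_t\}$, or use $x_t/2$ versus $x_t$ as the paper does) so that every $\Delta_j$ in the surviving range is covered. Second, the inclusion $\{\xi^{(1)}\in[a_j,a_{j+1})\}\cap\{\xi^{(1)}-\xi^{(3)}\leq\lambda_t\}\subseteq\{N_t(a_{j-1})\geq 3\}$ only gives $\xi^{(3)}\geq a_{j-1}$ rather than the strict inequality defining $N_t(a_{j-1})$; since $\mu$ is absolutely continuous this boundary event is null (or use $N_t(a_{j-2})\geq 3$), but it deserves a word.
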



\begin{proof}
For any $y_t\in\mathbb R$ we have the decomposition
\ban
&\mathbb P\big(\xi^{(1)}(t)-\xi^{(3)}(t)\leq \lambda_t\big)\notag\\
&\hspace{2cm}\leq \mathbb P(\xi^{(1)}(t)\leq y_t) + 
\mathbb P\big(\xi^{(1)}(t)-\xi^{(3)}(t)\leq \lambda_t 
\text{ and } \xi^{(1)}(t)> y_t\big).
\label{eq:two_terms}
\ean
We will use this for $y_t>-v_t/w_t+\lambda_t$.
Note that
\be
\mathbb P(\xi^{(1)}(t)\leq y_t) 
=\mathbb P(\Pi(A_t(y_t)) = \varnothing) 
=\exp(-\pi(A_t(y_t))).
\ee
For the other term in \eqref{eq:two_terms}, 
note that  $(\xi_n(t))_{n\geq 0}$ is a Poisson point process
and let
$\rho_t(\cdot)$ denote its intensity measure.  Then
using Mecke's formula, see e.g.\ \cite[Th.\ 4.1]{LP},
and simple properties of Poisson random variables,
we have for all~$t>0$,  
\be\begin{split}\label{eq:mecke}
\mathbb{P}(&\xi^{(1)}(t)-\xi^{(3)}(t)\leq\lambda_t
\text{ and } \xi^{(1)}(t)> y_t)\\
&=\int_{y_t}^{\oo} \mathrm d\rho_t(\xi) 
\mathbb{P}(\Pi(A_t(\xi))=\varnothing)
\mathbb{P}(|\Pi(A_t(\xi-\lambda_t))\setminus
\Pi(A_t(\xi))| \geq 2)\\
&\leq \int_{y_t}^{\oo} \mathrm d\rho_t(\xi) 
\mathbb{P}(\Pi(A_t(\xi))=\varnothing)
(\pi(A_t(\xi-\lambda_t))-
\pi(A_t(\xi)))^2.
\end{split}\ee
The intuition behind the first equality is that we integrate over all possible values $\xi$ for $\xi^{(1)}(t)$. For $\xi$ to be maximal, there needs to be one point of the point process at $\xi$, and none larger (hence the term $\mathbb{P}(\Pi(A_t(\xi))=\varnothing)$); for 
$\xi^{(1)}(t)-\xi^{(3)}(t)\leq\lambda_t$, there needs to be at least two points in $A_t(\xi-\lambda_t))\setminus A_t(\xi)$. (See Figure~\ref{fig:pic}.)
\begin{figure}
\begin{center}\includegraphics[width = 6cm]{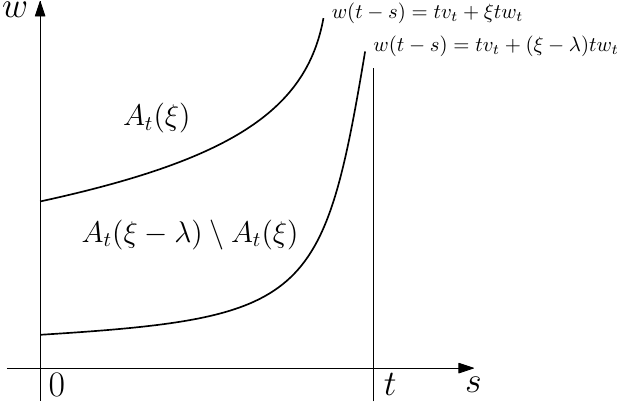}\end{center}
\caption{Intuition behind~\eqref{eq:mecke}.}
\label{fig:pic}
\end{figure}
Note that, using Mecke's formula again,
\be\label{eq:atmost1}
\int_{y_t}^{\oo} \mathrm d\rho_t(\xi) 
\mathbb{P}(\Pi(A_t(\xi))=\varnothing)=
\mathbb P(\xi^{(1)}(t)> y_t)\leq 1.
\ee\pagebreak[3]

We proceed using Lemma~\ref{lem:largest-xi}.
\begin{itemize}[leftmargin=*]
\item In the Weibull case, we set $x_t=t^\eps$
for \smash{$0<\eps<\tfrac1{1+\alpha}$} and in the decomposition
\eqref{eq:two_terms} we set $y_t=-\tfrac12x_t$.  
As \smash{$w_t=t^{-\frac1{1+\alpha}}L(t)$} we have $x_tw_t\to0$.
Then, by~\eqref{eq:pi-A-W}, for all $t$ large enough
\be\label{eq:xi1-W}
\mathbb P(\xi^{(1)}(t)\leq y_t)\leq 
\exp(-C t^{(\eps-\delta)(1+\alpha+\delta)
-\frac\delta{1+\alpha}}).
\ee
Using \eqref{eq:pi-A-diff-W} in 
\eqref{eq:mecke} and applying \eqref{eq:atmost1} we get that, for all $t$ large enough,
\be\label{eq:mecke-W}
\mathbb{P}(\xi^{(1)}(t)-\xi^{(3)}(t)\leq\lambda_t
\text{ and } \xi^{(1)}(t)> y_t)\leq
C \lambda_t^2 
t^{2\eps(\alpha-\delta)+2\delta(1+\alpha+\delta
+\frac1{1+\alpha})}.
\ee
Clearly we may select $\eps,\delta>0$ small enough that
\eqref{eq:xi1-W} and \eqref{eq:mecke-W} are both at most 
$Ct^\gamma \lambda_t^2$, for any $\gamma>0$.\smallskip
\item  In the Fr\'echet case, we set $x_t=t^{-\eps}$
for $0<\eps<\tfrac1{\alpha}$  and $\eps<\kappa$ and we set 
$y_t=x_t$.  Since \smash{$w_t=t^{\frac1\alpha}L(t)$}, we have $x_tw_t\to\oo$.
By \eqref{eq:pi-A-F}, for all $t$ large enough,
\be\label{eq:xi1-F}
\mathbb P(\xi^{(1)}(t)\leq y_t)\leq 
\exp(-C t^{\eps(\alpha+\delta)-\delta(\alpha+1/\alpha+\delta)}
)
\ee
Using \eqref{eq:pi-A-diff-F} in 
\eqref{eq:mecke} and applying \eqref{eq:atmost1} we get that, for all $t$ large enough,
\be\label{eq:mecke-F}
\mathbb{P}(\xi^{(1)}(t)-\xi^{(3)}(t)\leq\lambda_t
\text{ and } \xi^{(1)}(t)> y_t)\leq
C \lambda_t^2 
t^{2\eps(1+\alpha-\delta)
+2\delta(1/\alpha+\alpha-\delta)}.
\ee
Clearly we may select $\eps,\delta>0$ small enough that
\eqref{eq:xi1-F} and \eqref{eq:mecke-F} are both at most 
$Ct^\gamma \lambda_t^2$, for any $\gamma>0$.\smallskip
\item In the Gumbel case, 
let us for ease of reference recall Assumption~\ref{G}\ref{G2}:
\be\label{eq:G2-Phi_t}
\begin{cases}
\Phi_t(x)\geq \e{-x - c_1 x^2 /\log t}, &\text{ for all }
x\in (-c_2 \log t, c_2 \log t)\\
\Phi_t(x)\leq \e{- x + c_1 x^2 /\log t}, &\text{ for all }
x\in (-c_2 \log t, \tfrac{M-v_t}{w_t}).
\end{cases}
\ee
Set $x_t=2 \log\log t$.
By Assumption~\ref{G}\ref{G3}, $x_tw_t/v_t\to 0$ and thus
Lemma~\ref{lem:largest-xi} applies; together with 
the lower bound in \eqref{eq:G2-Phi_t}, this gives, for all~$t$ large enough,
\be\begin{split}
\mathbb{P}(\xi^{(1)}(t)\leq-x_t)&\leq
\exp\Big[-(\theta+o(1))\int_{-x_t}^{x_t} \Phi_t(z) \mathrm dz\Big]\\
&\leq \exp\Big[-(\theta+o(1))\exp\big(- c_1 x_t^2 /\log t\big) 
\int_{-x_t}^{x_t} \mathrm e^{- z}\mathrm dz\Big].
\end{split}\ee
Since $x_t^2/\log t\to 0$  we get that, as $t\uparrow\infty$,
\be\label{eq:G_first_term}
\mathbb{P}(\xi^{(1)}(t)\leq -x_t)
\leq\exp\big(-(\theta+o(1)) \mathrm e^{x_t}\big)
=\mathrm e^{-(\theta+o(1))(\log t)^2}
\leq \lambda_t^2,
\ee
for all $t$ large enough, because $\lambda_t = t^{-\kappa}$.
Next, \eqref{eq:pi-A-diff-G} gives that, for $\xi-\lambda_t\geq -x_t$,
\ban
\pi(A_t(\xi-\lambda_t))-\pi(A_t(\xi))
&\leq \lambda_t (\theta+o(1))\Phi_t(-x_t)
\leq \lambda_t (\theta+o(1)) \e{x_t+c_1 x_t^2 /\log t}\notag\\
&= \lambda_t \exp\big(x_t(1+o(1))\big),
\ean
because $x_t/\log t = o(1)$ as $t\uparrow\infty$. 
Thus, in total,
\[\mathbb{P}\big(\xi^{(1)}(t)-\xi^{(3)}(t)\leq\lambda_t
\text{ and } \xi^{(1)}(t)> -x_t\big)
\leq \lambda_t^2 \exp\big({2x_t}(1+o(1))\big).\]
As $\e{2x_t(1+o(1))}
\leq t^\gamma$, for $t$ large enough, this concludes 
the~proof. \qedhere
\end{itemize}
\end{proof}

\begin{proof}[Proof of Proposition~\ref{prop:two_tables_summable}]
By Lemma~\ref{lem:two-tables-bound}, for any $\gamma, C>0$,
{\cec for all $k$ large enough,}
\be
\PP(\xi^{(1)}(t_k)-\xi^{(3)}(t_k)\leq \lambda_{t_k})
\leq C t_k^{2\kappa-\gamma}
= C k^{-(2\kappa\eta-\eta\gamma)}.
\ee
Since $2\kappa\eta>1$ we can choose $\gamma>0$ so that 
$2\kappa\eta-\eta\gamma>1$.  It follows that 
$\PP(\xi^{(1)}(t_k)-\xi^{(3)}(t_k)\leq \lambda_{t_k})$ are summable,
as required. 
\end{proof}

\subsection{Interpolation}\label{sub:interpolation}

By Proposition~\ref{prop:two_tables_summable}
and the Borel--Cantelli lemma,
almost surely, there exists $k_0$ such that, for all $k\geq k_0$,
\begin{equation}\label{eq:before_BC}
\Theta^{(1)}(t_k)-\Theta^{(3)}(t_k)
>  \lambda_{t_k}t_kw_{t_k}.
\end{equation}
We now show that we can `interpolate' between the times $t_k$:

\begin{proposition}\label{prop:interpolate}
As in Proposition~\ref{prop:two_tables_summable}, set 
$\lambda_t=t^{-\kappa}$ and $t_k=k^\eta$
with $\kappa,\eta>0$ satisfying $2\kappa\eta>1$.
Assume further that
\begin{itemize}[leftmargin=*]
\item In the Weibull case, that $\frac1\eta>\kappa+\frac1{1+\alpha}$,
\item In the Fr\'echet case, $\frac1\eta>\kappa+\frac1{\alpha}$,
\item In the Gumbel case, Assumption~\ref{G}.
\end{itemize}
Then, 
almost surely, there exists $k_1$ such that, for all $k\geq k_1$,
\be
\inf_{t\in[t_{k-1},t_k]} 
\big(\Theta^{(1)}(t)-\Theta^{(3)}(t)\big)
>\tfrac12 
\lambda_{t_k}t_kw_{t_k}.
\ee
\end{proposition}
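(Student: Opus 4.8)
The plan is to exploit that every exponent $\Theta_n(\cdot)=W_n(\cdot-\tau_n)$ is affine and non-decreasing on $[\tau_n,\infty)$, so that each order statistic $t\mapsto\Theta^{(i)}(t)$ is non-decreasing; in particular $\Theta^{(3)}(t)\le\Theta^{(3)}(t_k)$ for all $t\in[t_{k-1},t_k]$, and it remains only to bound $\Theta^{(1)}(t)$ from below on this interval. Write $n_1=m_1(t_k)$ for the index achieving $\Theta^{(1)}(t_k)$. I would first show, by a Borel--Cantelli strengthening of Lemma~\ref{lem:young} (essentially the content of Lemma~\ref{lem:new-tables}), that almost surely, for all large~$k$, the table realising the largest exponent at time $t_k$ was already present at time $t_{k-1}$, i.e.\ $\tau_{n_1}\le t_{k-1}$. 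Granting this, $\Theta_{n_1}$ is affine on $[t_{k-1},t_k]$, so using $\Theta^{(1)}(t)\ge\Theta_{n_1}(t)=\Theta^{(1)}(t_k)-W_{n_1}(t_k-t)$, $\Theta^{(3)}(t)\le\Theta^{(3)}(t_k)$ and \eqref{eq:before_BC},
\[
\Theta^{(1)}(t)-\Theta^{(3)}(t)\ \ge\ \big(\Theta^{(1)}(t_k)-\Theta^{(3)}(t_k)\big)-W_{n_1}(t_k-t)\ >\ \lambda_{t_k}t_kw_{t_k}-W_{n_1}(t_k-t_{k-1}).
\]
Since $t_k-t_{k-1}\asymp t_k^{1-1/\eta}$ and $\lambda_{t_k}=t_k^{-\kappa}$, the proposition reduces to the statement that, almost surely for all large~$k$,
\[
W_{m_1(t_k)}\ \le\ \tfrac1{2\eta}\,t_k^{1/\eta-\kappa}\,w_{t_k}. \qquad(\star)
\]

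To prove $(\star)$ I would argue case by case, as in Lemma~\ref{lem:largest-xi}. In the Weibull case $W_n\le1$ for every~$n$, while $t_k^{1/\eta-\kappa}w_{t_k}=t_k^{1/\eta-\kappa-1/(1+\alpha)}L_0(t_k)\to\infty$ precisely by the hypothesis $\tfrac1\eta>\kappa+\tfrac1{1+\alpha}$, so $(\star)$ is automatic. In the Fr\'echet and Gumbel cases I would combine two inputs: (i) the identity $W_{m_1(t_k)}=\Theta^{(1)}(t_k)/(t_k-\tau_{n_1})$, which, once one knows (again from a Borel--Cantelli version of Lemma~\ref{lem:young}/Lemma~\ref{lem:new-tables}) that $\tau_{n_1}\le(1-\delta)t_k$ for some fixed $\delta>0$ in the Fr\'echet case, respectively $\tau_{n_1}=o(t_k)$ in the Gumbel case, gives $W_{m_1(t_k)}\lesssim\Theta^{(1)}(t_k)/t_k$; and (ii) an almost-sure upper bound, along the sequence $(t_k)$, for the normalised largest exponent $\xi^{(1)}(t_k)=(\Theta^{(1)}(t_k)-t_kv_{t_k})/(t_kw_{t_k})$. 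For (ii) one uses $\mathbb P(\xi^{(1)}(t)>x)=1-\mathrm{e}^{-\pi(A_t(x))}\le\pi(A_t(x))$ together with the bounds on $\pi(A_t(\cdot))$ from Lemma~\ref{lem:piA} and Potter's inequalities (respectively Assumption~\ref{G}\ref{G2} in the Gumbel case), choosing thresholds $x_k$ that grow slowly enough for $\sum_k\mathbb P(\xi^{(1)}(t_k)>x_k)$ to converge; Borel--Cantelli then gives $\Theta^{(1)}(t_k)\le t_kv_{t_k}+x_k\,t_kw_{t_k}$, hence $W_{m_1(t_k)}\lesssim v_{t_k}+x_kw_{t_k}$, eventually almost surely. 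Comparing with $\tfrac1{2\eta}t_k^{1/\eta-\kappa}w_{t_k}$: in the Fr\'echet case $v_t\equiv0$ and the hypothesis $\tfrac1\eta>\kappa+\tfrac1\alpha$ (which makes $\tfrac1\eta-\kappa$ positive) ensures $t_k^{1/\eta-\kappa}$ is a positive power of $t_k$ dominating $x_k$; in the Gumbel case the same conclusion follows from Assumption~\ref{G} (using that $v_t/w_t=1/L_1(t)$ is slowly varying and condition~\ref{G3}). This establishes $(\star)$, and an application of the Borel--Cantelli lemma, exactly as for \eqref{eq:before_BC}, completes the proof.

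The delicate point — and the main obstacle — is step (ii) above together with the accompanying claim that the leading exponent (and, more generally, the top few exponents) were created well before time $t_{k-1}$: both must be upgraded from the in-probability, single-time statements of Lemmas~\ref{lem:young} and~\ref{lem:piA} to statements holding almost surely along the deterministic subsequence $(t_k)$. This is precisely where the strengthened hypotheses (Assumptions~\ref{W},~\ref{F},~\ref{G} in place of Assumption~\ref{H}) and the arithmetic constraints on $(\eta,\kappa)$ are used: they supply enough polynomial decay in the relevant tail estimates for the Borel--Cantelli series to converge, and they guarantee that the separation scale $\lambda_{t_k}t_kw_{t_k}$ genuinely dominates the product of the interpolation length $t_k-t_{k-1}\asymp t_k^{1-1/\eta}$ with the $O(v_{t_k}+w_{t_k})$ growth rate of the leading exponent. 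The later large-deviation input of Lemma~\ref{lem:Peter} is not needed here, only in Section~\ref{sub:proof_main} when passing from exponents back to table sizes.
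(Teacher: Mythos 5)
Your opening step matches the paper's: the paper's Lemma~\ref{lem:interpol} likewise exploits that each $\Theta_n$ is affine and each order statistic is non-decreasing, arriving at exactly your inequality $\Theta^{(1)}(t)-\Theta^{(3)}(t)\geq \Theta^{(1)}(t_k)-\Theta^{(3)}(t_k)-W_{m_1(t_k)}(t_k-t_{k-1})$, and the reduction to your condition $(\star)$ is the same as the paper's \eqref{eq:BC-claim-2}. One simplification you miss: you do not need the Borel--Cantelli input that $\tau_{m_1(t_k)}\leq t_{k-1}$. The bound $\Theta^{(1)}(t_k)-\Theta^{(1)}(t_{k-1})\leq W_{m_1(t_k)}(t_k-t_{k-1})$ holds deterministically in both cases ($\tau_{m_1(t_k)}>t_{k-1}$ or not), which is how the paper proves Lemma~\ref{lem:interpol}; importing Lemma~\ref{lem:new-tables} here is harmless but superfluous.

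The genuine gap is in your route to $(\star)$ in the Fr\'echet (and unbounded Gumbel) case. You propose $W_{m_1(t_k)}=\Theta^{(1)}(t_k)/(t_k-\tau_{n_1})$ together with the claim that, almost surely for all large $k$, $\tau_{n_1}\leq(1-\delta)t_k$ for some \emph{fixed} $\delta>0$. This claim is false and cannot be obtained by Borel--Cantelli: by Theorem~\ref{thm:cv_gen}, in the Fr\'echet case the rescaled creation time $\tau_{m_1(t)}/t$ of the table with the largest exponent converges in distribution to a law giving positive mass to $(1-\delta,1]$ for every fixed $\delta$, so $\sum_k\PP(\tau_{m_1(t_k)}>(1-\delta)t_k)$ diverges. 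What Lemma~\ref{lem:new-tables} actually gives is only $t_k-\tau_{n_1}\geq s_k=t_k^{\rho}$ with $\rho<\alpha/(1+\alpha)$, which weakens your bound to $W_{m_1(t_k)}\lesssim \Theta^{(1)}(t_k)/t_k^{\rho}$; the arithmetic can probably still be closed using $\frac1\eta>\kappa+\frac1\alpha$, but you have not done so, and step (ii) (an almost-sure upper envelope for $\xi^{(1)}(t_k)$ with explicitly chosen thresholds $x_k$) is also left entirely to the reader. The paper avoids all of this: it simply bounds $W_{m_1(t_k)}\leq\overline W(t_k)=\max_{i\leq M(t_k)}W_i$ and controls this maximum of i.i.d.\ weights directly from the tail of $\mu$ --- trivially by $1$ in the Weibull and bounded Gumbel cases, by $t_k^{2/\alpha+\eps}$ via Lemma~\ref{lem:max_as_F} in the Fr\'echet case, and by $t_k^{\rho}$ for arbitrarily small $\rho$ in the unbounded Gumbel case --- with no reference to $\Theta^{(1)}$, to $\tau_{n_1}$, or to the point process at all. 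I would recommend replacing your steps (i)--(ii) by this direct tail bound on the maximal weight.
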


To prove Proposition~\ref{prop:interpolate}, we use the following:
\begin{lemma}\label{lem:interpol}
Let $(t_k)_{k\geq 0}$ be an increasing sequence such that $t_0 = 0$.
For all $k\geq1$, for all $t\in[t_{k-1},t_k)$, 
\be
\Theta^{(1)}(t)-\Theta^{(3)}(t)\geq 
\Theta^{(1)}(t_k)-\Theta^{(3)}(t_k)-W_{m_1(t_k)}(t_k-t_{k-1})
\ee
\end{lemma}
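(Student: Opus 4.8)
The statement is entirely pathwise: no probability is needed, and the whole content is monotonicity of the exponents and of their order statistics as functions of~$t$. The plan is as follows.

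First I would record the elementary fact that each exponent $\Theta_n(t)=W_n(t-\tau_n)\bs 1_{t\geq\tau_n}$ is non-decreasing in~$t$ and grows at rate at most~$W_n$: for any $0\le s\le t$,
\[
0\le \Theta_n(t)-\Theta_n(s)\le W_n(t-s).
\]
This is checked by splitting into the three cases $\tau_n\le s$, $s<\tau_n\le t$, and $t<\tau_n$; only the middle case requires a line, where $\Theta_n(t)-\Theta_n(s)=W_n(t-\tau_n)\le W_n(t-s)$ because $\tau_n>s$.

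Next I would use this to get two monotonicity statements for the order statistics, and here the one mildly delicate point is that the roles of $\Theta^{(1)}$ and $\Theta^{(3)}$ must be treated \emph{asymmetrically}. For the subtracted term I would show that $t\mapsto\Theta^{(i)}(t)$ is non-decreasing for every~$i$: as $t$ grows, each already-created exponent grows and new ones are only added, so the multiset of exponents at the smaller time is dominated (after sorting) by a sub-multiset of the one at the larger time, and enlarging a multiset cannot decrease its $i$-th largest element; hence $\Theta^{(3)}(t)\le\Theta^{(3)}(t_k)$ for all $t\in[t_{k-1},t_k)$. For $\Theta^{(1)}$, by contrast, order-statistic monotonicity gives the \emph{wrong} direction, so instead I would track the single table $j:=m_1(t_k)$ that leads at time~$t_k$: for $t\in[t_{k-1},t_k)$ one has $\Theta^{(1)}(t)\ge\Theta_j(t)\ge\Theta_j(t_k)-W_j(t_k-t)\ge\Theta_j(t_k)-W_j(t_k-t_{k-1})$, the middle inequality being the rate bound above and the last one using $t\ge t_{k-1}$; since $\Theta_j(t_k)=\Theta^{(1)}(t_k)$ and $W_j=W_{m_1(t_k)}$, this is exactly the desired lower bound on $\Theta^{(1)}(t)$.

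Finally I would subtract the upper bound on $\Theta^{(3)}(t)$ from the lower bound on $\Theta^{(1)}(t)$ to obtain the asserted inequality. There is no genuine obstacle: the only things to be slightly careful about are the bookkeeping conventions (ties in the choice of $m_1(t_k)$, the value of the indicator at the creation time~$\tau_n$, and the implicit requirement that at least three tables are present so that $\Theta^{(3)}$ is defined), but none of these affect the argument. This lemma is purely the deterministic interpolation ingredient that, combined with the Borel--Cantelli output of Proposition~\ref{prop:two_tables_summable}, will upgrade control of $\Theta^{(1)}-\Theta^{(3)}$ from the endpoints $t_k$ to the whole intervals $[t_{k-1},t_k]$.
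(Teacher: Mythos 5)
Your proof is correct and follows essentially the same route as the paper: monotonicity of the order statistics handles $\Theta^{(3)}(t)\le\Theta^{(3)}(t_k)$, and the lower bound on $\Theta^{(1)}(t)$ comes from the rate bound $\Theta_n(t_k)-\Theta_n(t)\le W_n(t_k-t)$ applied to the table $m_1(t_k)$, which is exactly the content of the paper's inequality \eqref{eq:ineq_third_expo} (the paper routes this through $\Theta^{(1)}(t)\ge\Theta^{(1)}(t_{k-1})$ and a two-case analysis on whether $\tau_{m_1(t_k)}>t_{k-1}$, but that is the same estimate in different packaging).
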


\begin{proof}
Each $\Theta_n(t)$ is an increasing (affine) function of $t$.  Hence,
for all $i\geq1$, $\Theta^{(i)}(t)$ is increasing in $t$.  In
particular, for $t\in[t_{k-1},t_k)$, 
\ban
\Theta^{(1)}(t)-\Theta^{(3)}(t)
&\geq \Theta^{(1)}(t_{k-1})-\Theta^{(3)}(t_k)\notag\\
&=\Theta^{(1)}(t_k)-\Theta^{(3)}(t_k)
-\big[\Theta^{(1)}(t_k)-\Theta^{(1)}(t_{k-1})\big].
\ean
Because the largest exponent at time $t_k$ can only be larger than the largest exponent at time $t_{k-1}$, we have $0\leq \Theta^{(1)}(t_k)-\Theta^{(1)}(t_{k-1})$. Furthermore,
\be
0\leq \Theta^{(1)}(t_k)-\Theta^{(1)}(t_{k-1})
\leq W_{m_1(t_k)}(t_k-t_{k-1}).\label{eq:ineq_third_expo}
\ee
Indeed, the second inequality comes from the fact that
\[\Theta^{(1)}(t_k)=W_{m_1(t_k)}(t_k-\tau_{m_1(t_k)}) 
= W_{m_1(t_k)}(t_k-t_{k-1}) + W_{m_1(t_k)}(t_{k-1}-\tau_{m_1(t_k)}).\]
If $\tau_{m_1(t_k)}>t_{k-1}$, then $\Theta^{_{(1)}}_{\phantom{x}}(t_k)\leq W_{m_1(t_k)}(t_k-t_{k-1})$ and we indeed have~\eqref{eq:ineq_third_expo}.
Otherwise, $W_{m_1(t_k)}(t_{k-1}-\tau_{m_1(t_k)})$ is at most equal to the largest exponent at time~$t_{k-1}$, which is, by definition, $\Theta^{_{(1)}}_{\phantom{x}}(t_{k-1})$. This indeed implies~\eqref{eq:ineq_third_expo}.
\end{proof}

{{In the Gumbel case, we need the following facts about slowly varying functions:
\begin{lemma}\label{lem:XXX}
Let $L : (1,\infty) \to (0,\infty)$ be a non-decreasing function, slowly varying at infinity,
such that $L(x)\uparrow\infty$ as $x\uparrow\infty$,
and let $L^{-1}$ its generalised inverse.
\begin{enumerate}
\item For any $\beta>0$, 
\[\sum_{n\geq 1} \frac{n}{L^{-1}(n^\beta)}<\infty.\]
\item For any $\eps>0$, as $n\uparrow\infty$,
\[\frac{n}{L^{-1}(L(n^{1+\eps}))} \to 0, \quad\]
\end{enumerate}
\end{lemma}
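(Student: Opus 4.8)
The plan is to reduce both statements to elementary facts about slowly varying functions and their generalised inverses, via the Potter bounds~\eqref{eq:potter-infty}. For part~(1), the key observation is that since $L$ is slowly varying at infinity and increases to infinity, its generalised inverse $L^{-1}$ is \emph{rapidly} varying: it grows faster than any power. More precisely, I would argue that for any fixed $A>0$, one has $L^{-1}(y)\geq y^A$ for all $y$ large enough. Indeed, if this failed along a sequence $y_j\to\infty$, then writing $x_j=L^{-1}(y_j)$ we would get $x_j\leq y_j^A$, i.e.\ $y_j\geq x_j^{1/A}$, while $L(x_j)\approx y_j$ (up to the usual care with generalised inverses, $L(x_j^-)\leq y_j$); combining, $L(x_j)\geq x_j^{1/A}$ eventually, contradicting the Potter upper bound $L(x)\leq C x^\delta$ once $\delta<1/A$. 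Applying this with $A=3/\beta$ gives $L^{-1}(n^\beta)\geq n^{3}$ for $n$ large, whence $n/L^{-1}(n^\beta)\leq n^{-2}$ eventually, and the series converges.

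For part~(2), set $m=m(n):=L^{-1}(L(n^{1+\eps}))$; I want to show $n/m\to 0$, equivalently $m/n\to\infty$. By definition of the generalised inverse and monotonicity of $L$, we have $L(m)\geq L(n^{1+\eps})$ (with the usual left-continuity caveat, which is harmless here since we only need an asymptotic statement and may pass to $m$ slightly enlarged). Suppose for contradiction that $m/n$ stays bounded along some subsequence, say $m\leq Kn$. Then monotonicity gives $L(Kn)\geq L(m)\geq L(n^{1+\eps})$. But $L(Kn)/L(n)\to 1$ by slow variation, so $L(Kn)\leq 2L(n)$ for $n$ large; meanwhile the Potter lower bound~\eqref{eq:potter-infty} applied at the point $n^{1+\eps}$, compared against $L(n)$, forces $L(n^{1+\eps})/L(n)\to$ something that can be bounded below by $c\,n^{(1+\eps)\cdot 0 - \dots}$... here one must be slightly careful: slow variation alone gives $L(n^{1+\eps})/L(n)\to 1$ as well, so this crude comparison is not enough. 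The correct route is instead to note that $m\to\infty$ (since $L(n^{1+\eps})\to\infty$ and $L^{-1}$ of something tending to infinity tends to infinity), and then to use that $L(m)\geq L(n^{1+\eps})$ together with $L(m)/L(m^{1/(1+\eps/2)})\to 1$: if $m\leq n^{1+\eps/2}$ held along a subsequence, then $L(m)\leq L(n^{1+\eps/2})$, and one compares $L(n^{1+\eps/2})$ with $L(n^{1+\eps})$ — both are $\sim L(n)$, so again this is borderline.

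The genuinely correct argument, and the step I expect to be the main obstacle, is therefore to invoke the fact that \emph{$L^{-1}$ is rapidly varying of index $+\infty$} (a standard consequence of $L$ being slowly varying and increasing to infinity; see e.g.~\cite[Theorem 1.5.12 and Proposition 2.4.4]{BGT}), which means precisely that $L^{-1}(\lambda y)/L^{-1}(y)\to\infty$ for every $\lambda>1$. Given this, write $L(n^{1+\eps})=\lambda_n L(n)$ where $\lambda_n\to 1$ but also $L(n^{1+\eps})\to\infty$; then
\[
\frac{m}{n}=\frac{L^{-1}(L(n^{1+\eps}))}{n}\geq \frac{L^{-1}(L(n^{1+\eps}))}{L^{-1}(L(n))}\cdot\frac{L^{-1}(L(n))}{n},
\]
where the second factor is $\geq 1$ (indeed $L^{-1}(L(n))\geq n$ by the defining property of the generalised inverse), and the first factor: since $L(n^{1+\eps})\geq (1+\tfrac\eps2)\,L(n)$ would suffice to push the ratio to infinity by rapid variation, one needs the quantitative improvement that $L(n^{1+\eps})/L(n)$ is bounded away from $1$ — which is \emph{false} in general for slowly varying $L$. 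Hence the cleanest fix is to replace the naive bound and instead argue directly: $m=L^{-1}(L(n^{1+\eps}))\geq n^{1+\eps}$ fails in general too (generalised inverse only gives $L(m^-)\le L(n^{1+\eps})$, so $m\ge n^{1+\eps}$ up to the left-continuity issue, which \emph{is} in our favour here). In fact $L(m)\geq L(n^{1+\eps})$ combined with $L$ non-decreasing gives $m\geq n^{1+\eps}$ \emph{provided} $L$ is strictly increasing near infinity, or more generally $m\geq n^{1+\eps}$ up to the flat parts of $L$; since the statement is only asymptotic and the flat parts do not affect $m/n\to\infty$, we conclude $m\geq n^{1+\eps/2}$ eventually, hence $n/m\leq n^{-\eps/2}\to 0$. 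I would write the proof in this last form, being careful to state the left-continuity convention for $L^{-1}$ up front so that the inequality $m\geq n^{1+\eps}$ (up to negligible correction) is clean; this bookkeeping around generalised inverses is the only real subtlety.
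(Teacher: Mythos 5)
Part (1) of your proposal is correct and is essentially the paper's own argument: both use the Potter upper bound $L(x)\le x^{\delta}$ together with $L^{-1}(L(x))\ge x$ to deduce $L^{-1}(y)\ge y^{1/\delta}$ for large $y$, and then choose $\delta$ small relative to $\beta$ to make the series converge.

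For part (2) the difficulty you ran into is a symptom of a typo in the statement rather than of a genuinely hard step. As printed, the claim reads $n/L^{-1}(L(n^{1+\eps}))\to 0$, and under the convention $L^{-1}(y)=\sup\{x:L(x)\le y\}$ (the one forced by the inequality $L^{-1}(L(x))\ge x$ already used in part (1)) this is exactly the triviality your final paragraph establishes, since $L^{-1}(L(n^{1+\eps}))\ge n^{1+\eps}$. But the statement the paper actually proves, and the one invoked later (in the unbounded Gumbel cases of Proposition~\ref{prop:interpolate} and Lemma~\ref{lem:new-tables}, where the relevant quantity is $A^{-1}(A(t)^{1+\eps})$), is
\[
\frac{n}{L^{-1}\big(L(n)^{1+\eps}\big)}\to 0,
\]
with the exponent on the \emph{value} $L(n)$, not on the argument $n$. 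For that version your direct bound gives nothing, while the obstruction you correctly identified for the literal reading --- that $L(n^{1+\eps})/L(n)\to1$ by slow variation, so rapid variation of $L^{-1}$ cannot be triggered --- disappears: now $L(n)^{1+\eps}/L(n)=L(n)^{\eps}\to\infty$, so for any fixed $K>1$ one has $L(n)^{1+\eps}\ge K\,L(n)$ eventually, hence by monotonicity $L^{-1}(L(n)^{1+\eps})\ge L^{-1}(K L(n))$, and rapid variation of $L^{-1}$ (the fact you cited and then discarded) gives $L^{-1}(K L(n))/L^{-1}(L(n))\to\infty$; combined with $L^{-1}(L(n))\ge n$ this yields the claim. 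In short, the argument you flirted with in the middle of your proposal is precisely the paper's proof; you abandoned it only because you were solving the mis-stated problem, and your submitted proof does not establish the version of the lemma that the rest of the paper uses.
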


\begin{proof}
{\it (1)} By the Potter bounds, for any $\delta>0$, there exists $x_0 = x_0(\delta)$ such that, for all $x\geq x_0$,
$L(x)\leq x^\delta$.
Because $L$ is non-decreasing, so is $L^{-1}$, and we get that
\[L^{-1}(L(x))\leq L^{-1}(x^\delta),\]
which implies, because $L^{-1}(L(x))\geq x$, that $x\leq  L^{-1}(x^\delta)$.
Taking $y = x^{\delta}$, we get that
$L^{-1}(y)\geq y^{\nicefrac1\delta}$. 
Taking $\delta$ large enough such that $\nicefrac\beta\delta>2$ concludes the proof.

{\it (2)} For any $K>0$, for all $n$ large enough, $L(n)^{\eps}\geq K$. Thus, because $L^{-1}$ is non-decreasing, $L^{-1}(L(n)^{1+\eps})\geq L^{-1}(KL(n))$.
By~\cite[Theorem~2.7(i)]{BGT}, $L^{-1}$ is rapidly varying, which, by definition, implies that
\[\frac{L^{-1}(L(n))}{L^{-1}(KL(n))} \to 0, \quad\text{ as }n\uparrow\infty.
\]
Thus,
\[\frac{n}{L^{-1}(L(n)^{1+\eps})} \leq \frac{L^{-1}(L(n))}{L^{-1}(L(n)^{1+\eps})} \to 0, \quad\text{ as }n\uparrow\infty,\]
which concludes the proof.
\end{proof}}}

In the Fr\'echet case 
we also need the following almost sure
estimate for the maximum weight of the $n$ first tables.
\begin{lemma}\label{lem:max_as_F}
Under Assumption~\ref{H} in the Fr\'echet case,
for any $\eps>0$, almost surely for $n$ large enough,
$\max_{1\leq i\leq n}  W_i\leq n^{2/\alpha+\eps}$. 
\end{lemma}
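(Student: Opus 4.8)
The plan is to prove Lemma~\ref{lem:max_as_F} by a direct application of the Borel--Cantelli lemma, with no subsequence trick needed. Recall that in the Fr\'echet case Assumption~\ref{H} forces $\mathbb P(W_1>x)=\mu(x,\infty)=x^{-\alpha}L(x)$ for some function $L$ slowly varying at infinity. Fix $\eps>0$ and set $a_n=n^{2/\alpha+\eps}$.

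First I would use a union bound over the first $n$ weights:
\[
\mathbb P\Big(\max_{1\le i\le n}W_i>a_n\Big)\le n\,\mathbb P(W_1>a_n)=n\,a_n^{-\alpha}L(a_n)=n^{-1-\alpha\eps}\,L\big(n^{2/\alpha+\eps}\big).
\]
Next I would invoke the Potter bound~\eqref{eq:potter-infty} for the slowly varying function $L$: choosing $\delta>0$ small enough that $(2/\alpha+\eps)\delta<\alpha\eps$, there exist $C>0$ and $n_0$ with $L\big(n^{2/\alpha+\eps}\big)\le C\,n^{(2/\alpha+\eps)\delta}$ for all $n\ge n_0$, whence
\[
\mathbb P\Big(\max_{1\le i\le n}W_i>a_n\Big)\le C\,n^{-1-c},\qquad c:=\alpha\eps-(2/\alpha+\eps)\delta>0,
\]
for all $n\ge n_0$. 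Since $\sum_n n^{-1-c}<\infty$, the Borel--Cantelli lemma yields that, almost surely, $\max_{1\le i\le n}W_i\le n^{2/\alpha+\eps}$ for all but finitely many $n$, which is exactly the assertion.

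There is essentially no obstacle in this argument: the content of the lemma is a deliberately non-sharp tail estimate, and the only thing worth noticing is why the exponent reads $2/\alpha+\eps$ rather than the naively expected $1/\alpha+\eps$. The union bound over $i\le n$ costs a factor $n$, so the series to be summed is $\sum_n n\,\mathbb P(W_1>a_n)$; taking $a_n=n^{2/\alpha+\eps}$ contributes an extra $n^{-1}$ which makes this summable for \emph{every} $\eps>0$, whereas the sharper threshold $n^{1/\alpha+\eps}$ would only give summability along a sparse subsequence and would then require an additional monotonicity step. Since the Fr\'echet part of the proof of Theorem~\ref{th:two_tables} never needs more than $n^{2/\alpha+\eps}$, the simpler statement suffices.
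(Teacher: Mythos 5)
Your proof is correct and is essentially identical to the paper's: a union bound giving $n\,\mathbb P(W_1>n^{2/\alpha+\eps})=n^{-1-\alpha\eps}L(n^{2/\alpha+\eps})$, a Potter bound to absorb the slowly varying factor, and Borel--Cantelli. You simply spell out the choice of the Potter exponent $\delta$ more explicitly than the paper does.
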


\begin{proof}
Using that $\mu(x,\oo)=x^{-\alpha}L(x)$,
where $L(x)$ is slowly varying at $\oo$, we get
\be
\PP\big(\max_{1\leq i\leq n}  W_i> n^{2/\alpha+\eps}\big)\leq 
n \PP(W_1\geq n^{2/\alpha+\eps})
=n^{-1-\eps\alpha}L(n^{2/\alpha+\eps}).
\ee
Then, by the Potter bounds, $\sum_{n\geq1} \PP\big(\max_{1\leq i\leq n}  W_i>
n^{2/\alpha+\eps}\big)<\oo$
so the result follows from the Borel--Cantelli lemma.
\end{proof}

\begin{proof}[Proof of Proposition~\ref{prop:interpolate}]
Note that \eqref{eq:before_BC}
combined with Lemma~\ref{lem:interpol} gives
that, for all $k\geq k_0$ and {\peter $t\in[t_{k-1},t_{k})$,}
\be\label{eq:BC-claim}
\Theta^{(1)}(t)-\Theta^{(3)}(t)
\geq \lambda_{t_k}t_kw_{t_k}
-\overline W(t_k)(t_k-t_{k-1}),
\ee
where $\overline W(t)$ is the largest table weight up to time $t$. 
We argue that
\be\label{eq:BC-claim-2}
\frac{\lambda_{t_k} t_k w_{t_k}}
{\overline W(t_k)(t_k-t_{k-1})} 
\to \oo \qquad\mbox{almost surely},
\ee
which gives the claim.
Note that
$t_k-t_{k-1} = k^{\eta}-(k-1)^{\eta}=(\eta + o(1))t_k^{1-\nicefrac1\eta}$ as $k\to\infty$.
For what follows, it is useful to refer to Table~\ref{cheatsheet} for expressions for $w_t$.
\begin{itemize}[leftmargin=*]
\item
In the Weibull case, $\overline W(t)\leq 1$ almost surely, and 
$w_t=t^{-\frac1{1+\alpha}}L_0(t)$ where $L_0(t)$ is slowly varying at infinity.  
Then, almost surely as $k\to\infty$,
$$
\frac{\lambda_{t_k} t_k w_{t_k}}
{\overline W(t_k)(t_k-t_{k-1})} \geq (\nicefrac1\eta+o(1))t_k^{\frac1\eta-\kappa-\frac1{1+\alpha}} L_0(t_k)
\to \oo,
\qquad\mbox{since } \tfrac1\eta>\kappa+\tfrac1{1+\alpha}.
$$
\item  In the Fr\'echet case, 
first note that $M(t_k)\to\oo$ almost surely,
and by large-deviations estimates for Poisson random
variables, almost surely for all $k$ large enough,
$M(t_k)\leq 2\theta t_k$.  It follows from Lemma~\ref{lem:max_as_F} that 
\be\label{eq:overlline-W-bound}
\overline W(t_k)\leq (2\theta t_k)^{2/\alpha+\eps}
\qquad\mbox{
almost surely for all $k$ large enough.}
\ee 
Also,
$w_t=t^{1/\alpha}L_3(t)$ where $L_3(t)$ is slowly varying at infinity.
Then, as $k\uparrow\infty$,
\be
\frac{\lambda_{t_k} t_k w_{t_k}}
{\overline W(t_k)(t_k-t_{k-1})} \geq (\nicefrac1\eta+o(1))
t_k^{\frac1\eta-\kappa-\frac1{\alpha}-\eps} L_3(t_k).
\ee
Since $\frac1\eta>\kappa+\frac1\alpha$ we can find $\eps>0$ such that
$\frac1\eta>\kappa+\frac1\alpha+\eps$.
Then \eqref{eq:BC-claim-2} follows.\smallskip
\item In the Gumbel case we get
\be
\frac{\lambda_{t_k} t_k w_{t_k}}
{\overline W(t_k)(t_k-t_{k-1})}
\geq (\nicefrac1\eta+o(1)) \frac{t_k^{\frac1\eta-\kappa}L_1(t_k)L_2(t_k)}
{\overline W(t_k)}
\ee
where $L_1(t)\to0$ and $L_2(t)\to M$ are both slowly varying at infinity.  
In the bounded case $M=1$, \eqref{eq:BC-claim-2} follows
for any $\kappa>0$ picking any  $\frac1\eta\in(\kappa,2\kappa)$.
In the unbounded case $M=\oo$, for any $\rho>0$, and $n\geq 1$,
\be\label{eq:maxW-G}
\PP\big(\max_{1\leq i\leq n} W_i > n^\rho\big)
\leq n \mu(n^\rho,\oo).
\ee
By Assumption~\ref{H}, we have
$A^{-1}(n^\rho) \mu(A(A^{-1}(n^\rho)),\oo) \to 1.$
{Because, in the Gumbel case, $A$ is increasing}, we get
\[\mu(n^\rho,\oo) = \frac{1+o(1)}{A^{-1}(n^\rho)}.\]
Thus, by~\eqref{eq:maxW-G},
\[\PP\big(\max_{1\leq i\leq n} W_i > n^\rho\big)\leq \frac{n(1+o(1))}{A^{-1}(n^\rho)},
\]
which, by Lemma~\ref{lem:XXX} is summable, 
{\cec because, in the Gumbel case, $A$ is slowly varying (as proved in the proof of Lemma~\ref{lem:ut})}. 
Arguing similarly to
the Fr\'echet case, we get that $\overline W(t_k)\leq M(t_k)^\rho \leq (2\theta t_k)^\rho$ almost
surely for $k$ large enough. 
Choosing $\rho>0$ such that
$\kappa<\frac1\eta+\rho<2\kappa$
\eqref{eq:BC-claim-2} follows.     \qedhere
\end{itemize}
\end{proof}

With the results obtained so far we get:

\begin{proposition}\label{prop:two-largest-exponents}
Under the same assumptions as for Proposition~\ref{prop:interpolate}, 
\be
\frac{\exp(\Theta^{(1)}(t))+\exp(\Theta^{(2)}(t))}
{\sum_{n=1}^{M(t)} \exp(\Theta_n(t))}
\to 1 \qquad \mbox{almost surely}.
\ee
\end{proposition}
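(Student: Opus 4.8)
The plan is to show that the sum over the third-largest and later exponents is negligible compared with the largest, i.e.\ that $\sum_{j\geq3}\exp(\Theta^{(j)}(t))=o\big(\exp(\Theta^{(1)}(t))\big)$ almost surely. Since the denominator in the displayed ratio equals $\exp(\Theta^{(1)}(t))+\exp(\Theta^{(2)}(t))+\sum_{j\geq3}\exp(\Theta^{(j)}(t))$ and is in particular at least $\exp(\Theta^{(1)}(t))$, this gives
\[
0\leq 1-\frac{\exp(\Theta^{(1)}(t))+\exp(\Theta^{(2)}(t))}{\sum_{n=1}^{M(t)}\exp(\Theta_n(t))}
\leq\frac{\sum_{j\geq3}\exp(\Theta^{(j)}(t))}{\exp(\Theta^{(1)}(t))},
\]
so it suffices to make the right-hand side tend to $0$. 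For that I would use the crude bound $\sum_{j\geq3}\exp(\Theta^{(j)}(t))\leq M(t)\exp(\Theta^{(3)}(t))$, valid once $M(t)\geq3$ and hence for all large $t$ since $M(t)\to\infty$ almost surely; it therefore suffices to prove that $M(t)\exp\big(\Theta^{(3)}(t)-\Theta^{(1)}(t)\big)\to0$ almost surely.

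The two ingredients are then Proposition~\ref{prop:interpolate}, to control the exponent gap, and the law of large numbers, to control the prefactor. Fix an admissible pair $(\kappa,\eta)$ as in Proposition~\ref{prop:interpolate} (the conclusion here does not involve $\kappa,\eta$, so we are free to choose them conveniently). By that proposition, almost surely there is $k_1$ such that for all $k\geq k_1$ and all $t\in[t_{k-1},t_k]$ one has $\Theta^{(1)}(t)-\Theta^{(3)}(t)>\tfrac12\lambda_{t_k}t_kw_{t_k}$. Moreover the table-creation times form a Poisson process of rate $\theta$, so $M(t)/t\to\theta$ almost surely and hence almost surely $M(t)\leq M(t_k)\leq 2\theta t_k$ for all large $k$. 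Writing $k=k(t)$ for the index with $t\in[t_{k-1},t_k]$ --- so that $k(t)\to\infty$ and, since $t_k/t_{k-1}\to1$, also $t_{k(t)}\sim t$ --- we obtain, almost surely for all large $t$,
\[
M(t)\exp\big(\Theta^{(3)}(t)-\Theta^{(1)}(t)\big)\;\leq\; 2\theta\, t_k\,\exp\big(-\tfrac12\lambda_{t_k}t_kw_{t_k}\big),
\]
which tends to $0$ as soon as $\lambda_{t_k}t_kw_{t_k}/\log t_k\to\infty$.

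It remains to check that $\lambda_t t w_t\gg\log t$ for an admissible choice of $(\kappa,\eta)$, which is where the explicit forms of $w_t$ from Table~\ref{cheatsheet} and Lemma~\ref{lem:ut} enter. In the Weibull case $\lambda_t t w_t=t^{\,1-\kappa-1/(1+\alpha)}L_0(t)$; the constraints $2\kappa\eta>1$ and $1/\eta>\kappa+1/(1+\alpha)$ force $\kappa>1/(1+\alpha)$, and Assumption~\ref{W} ($\alpha>1$) leaves room to impose in addition $\kappa<\alpha/(1+\alpha)$, so the exponent of $t$ is positive. In the Fr\'echet case $\lambda_t t w_t=t^{\,1+1/\alpha-\kappa}L_3(t)$, and the constraints force $\kappa>1/\alpha$, hence $\kappa<1+1/\alpha$ and the exponent of $t$ is again positive. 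In the Gumbel case $w_t$ is slowly varying, so by Potter's bound $\lambda_t t w_t\geq t^{\,1-\kappa-\delta}$ eventually for any $\delta>0$, with positive exponent once $\kappa<1$, which is compatible with Assumption~\ref{G} (as in the proof of Proposition~\ref{prop:interpolate}). In all three cases $\lambda_t t w_t$ grows polynomially, so $\lambda_{t_k}t_kw_{t_k}\gg\log t_k$, which finishes the proof. The substantive work is entirely in Propositions~\ref{prop:two_tables_summable} and~\ref{prop:interpolate}; the only delicate point remaining is that the polynomial prefactor $M(t)$ must be dominated by $\exp(\tfrac12\lambda_{t_k}t_kw_{t_k})$, i.e.\ $\lambda_t t w_t\gg\log t$, which is, for instance, the reason the Weibull case needs $\alpha>1$.
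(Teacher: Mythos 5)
Your proposal is correct and follows essentially the same route as the paper: the same reduction of the error to $M(t)\exp\big(\Theta^{(3)}(t)-\Theta^{(1)}(t)\big)$, the same appeal to Proposition~\ref{prop:interpolate} together with the bound $M(t_k)\leq 2\theta t_k$, and the same case-by-case verification that $\lambda_t t w_t$ grows polynomially (with the identical constraints on $\kappa$, including the observation that the Weibull case is where $\alpha>1$ is needed). No gaps.
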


\begin{proof}
Fix $t>0$. We have
\be
0\leq 
1-\frac{\exp(\Theta^{(1)}(t))+\exp(\Theta^{(2)}(t))}
{\sum_{n=1}^{M(t)} \exp(\Theta_n(t))}
\leq \frac{M(t)\e{\Theta^{(3)}(t)}}{\e{\Theta^{(1)}(t)}}.
\ee
Let $k = k(t)$ be such that $t\in[t_{k-1},t_k]$.  
Then, almost surely for all $t$ large enough, 
by Proposition~\ref{prop:interpolate}
and a large-deviations bound for $M(t_k)$,
\be\label{eq:exptheta-bound}
 \frac{M(t)\e{\Theta^{(3)}(t)}}{\e{\Theta^{(1)}(t)}}
\leq
2\theta t_k \exp\big(-\tfrac12\lambda_{t_k} t_k w_{t_k}\big).
\ee
We now check that the right-hand-side goes to zero as $t$ (and thus $k = k(t)$) goes to infinity:
\begin{itemize}[leftmargin=*]
\item In the Weibull case, 
$\lambda_{t_k} t_k w_{t_k}=t_k^{1-\kappa-\frac1{1+\alpha}}L_0(t_k)$
so \eqref{eq:exptheta-bound}
goes to zero provided we select $\kappa<1-\frac1{1+\alpha}$
and then $\eta$ as in Proposition~\ref{prop:interpolate}.
\item In the Fr\'echet case, 
$\lambda_{t_k} t_k w_{t_k}=t_k^{1-\kappa+\frac1{\alpha}}L_3(t_k)$
so \eqref{eq:exptheta-bound}
goes to zero provided we select $\kappa<1+\frac1{\alpha}$
and then $\eta$ as in Proposition~\ref{prop:interpolate}.
\smallskip
\item In the Gumbel case, 
$\lambda_{t_k} t_k w_{t_k}=t_k^{1-\kappa}L_1(t_k)L_2(t_k)$
so \eqref{eq:exptheta-bound}
goes to zero provided we select $\kappa<1$
and then $\eta$ as in Proposition~\ref{prop:interpolate}. \qedhere
\end{itemize}
\end{proof}

Proposition~\ref{prop:two-largest-exponents} can be seen as an
analog of Theorem~\ref{th:two_tables} where we have replaced the
tables with their growth rates.  
To establish 
Theorem~\ref{th:two_tables} we need to argue that
$\exp(\Theta^{(j)}(t))$ is a good approximation of the size of the
$j$-th largest table.

\subsection{Approximating the table sizes}
\label{sub:approxi}

\begin{proposition}\label{prop:exponents-vs-tables}
 {\cec Let $t_k=k^\eta$ with $\eta>0$ 
and let $\varphi\in(0,1)$.
Then, almost surely, for all $k$ large enough, 
\[
\text{for all $m$ with $\tau_m\leq t_k$,}
\sup_{t\geq \tau_m}  
|\log Z_{m}(t)-\Theta_{m}(t)|\leq t_k^{1-\varphi}.
\]}
\end{proposition}

\begin{proof}
{\cec We aim at using the Borel--Cantelli 
lemma, and thus prove that $\PP(A_k)$ is summable where
\[A_k=\Big\{\exists m \colon \tau_m\leq t_k
\text{ and }
\sup_{t\geq \tau_m}  
|\log Z_{m}(t)-\Theta_{m}(t)|> t_k^{1-\varphi}\Big\}.
\]
We have
\begin{align}
\mathbb P(A_k)
&\leq \EE\Big[
\sum_{\tau_m\leq {\cec t_k}} \PP\big( \sup_{t\geq \tau_m}  
|\log Z_{m}(t)-\Theta_{m}(t)|> t_k^{1-\varphi}\mid W_m, \tau_m\big)
\Big] \\
& = \EE\Big[
\sum_{\tau_m\leq t_k} \PP\big( 
\sup_{t\geq \tau_m}  
|\log Z_{m}(t)-W_{m}(t-\tau_m)|> t_k^{1-\varphi}\mid W_m, \tau_m\big)
\Big].\notag
\end{align}
We now use Lemma~\ref{lem:Peter} with 
$\lambda = W_m$ and $R=t_k^{1-\varphi}$
to get that, for all integers $m$ such that $\tau_m\leq t_k$,
\be\begin{split}
& \PP\big( \sup_{t\geq\tau_m} 
|\log Z_{m}(t)-W_{m}(t-\tau_m)|> {\cec t_k}^{1-\varphi}\mid W_m, \tau_m\big)
\leq 2\Gamma(\nicefrac12)\e{-t_k^{1-\varphi}/2}.
\end{split}\ee
Thus,
\[\PP(A_k)\leq 2\Gamma(\nicefrac12)\e{-t_k^{1-\varphi}/2}\EE[M(t_k)] = 
2\theta\Gamma(\nicefrac12) t_k\e{-t_k^{1-\varphi}/2}.
\]
Because $t_k = k^{\eta}$, this is summable as soon as $\varphi<1$.}
\end{proof}

\subsection{Proof of Theorem~\ref{th:two_tables}}
\label{sub:proof_main}

Recall that $m_j=m_j(t)$ denotes the index of the
$j$-th largest exponent $\Theta$.  Our first aim is to prove that 
\be\label{eq:aim}
\sup_{t\in[t_{k-1},t_k]} 
\frac{{\jeb \sum_{j=3}^{M(t)}} Z_{m_j}(t)}{Z_{m_1}(t)}
\to 0, \quad\mbox{ almost surely as }k\to\oo.
\ee
We {\peter{do this}} 
before showing how to deduce the
same claim about the largest tables, i.e.~Theorem~\ref{th:two_tables}.
{In the proof of \eqref{eq:aim}, a delicate issue is  to verify that
there  exists a choice of the parameters $\kappa, \eta$, and
$\varphi$ that satisfies all necessary assumptions. 
}

\begin{proof}[Proof of \eqref{eq:aim}]
By Proposition~\ref{prop:exponents-vs-tables}, 
almost surely for all $k$ large enough,
\begin{align}\sup_{t\in[t_{k-1},t_k]} 
\frac{{\jeb \sum_{j=3}^{M(t)} } Z_{m_j}(t)}{Z_{m_1}(t)}
&\leq \sup_{t\in[t_{k-1},t_k]}  
\frac{{\jeb \sum_{j=3}^{M(t)} } \exp(\Theta_{m_j}(t)+t_k^{1-\varphi})}
{\exp(\Theta_{m_1}(t)-t_k^{1-\varphi})}\notag\\
&\leq \sup_{t\in[t_{k-1},t_k]}  M(t)
\exp\big(
-(\Theta_{m_1}(t)-\Theta_{m_3}(t))+2t_k^{1-\varphi}
\big)\notag\\
&\leq  M(t_k)
\exp\big(-\tfrac12 t_k\lambda_{t_k} w_{t_k}+2t_k^{1-\varphi}
\big),
\label{eq:onMk}
\end{align}
by Proposition~\ref{prop:interpolate}.
Using the fact that $M(t_k)\leq 2\theta t_k$ almost surely for all $k$
large enough (by a large deviation estimate for Poisson random
variables, because $M(t_k)$ is a Poisson random variable of parameter
$\theta t_k$), we get that, almost surely for all $k$ large enough, 
\be \label{eq:onMk2}
\sup_{t\in[t_{k-1},t_k]} 
\frac{{\jeb \sum_{j=3}^{M(t)} }  Z_{m_j}(t)}{Z_{m_1}(t)}
\leq 2\theta t_k
\exp\big(-\tfrac12 t_k\lambda_{t_k} w_{t_k}+2t_k^{1-\varphi}
\big)
\ee
We need to check that the right-hand-side of \eqref{eq:onMk2} converges
to zero, i.e. that
$t_k\lambda_{t_k} w_{t_k}\gg t_k^{1-\varphi}$ as $k\uparrow\infty$.
Recall that $\lambda_t=t^{-\kappa}$.
\begin{itemize}[leftmargin=*]
\item Weibull case: $w_t=t^{-\frac1{1+\alpha}}L_0(t)$ 
so \eqref{eq:aim} follows as soon as
$-\kappa+\varphi-\tfrac1{1+\alpha}>0$ i.e.\
\be\label{eq:onMk-W}
\varphi>\kappa+\tfrac1{1+\alpha}.
\ee
\item Fr\'echet case: $w_t= t^{\frac1{\alpha}}L_3(t)$
so \eqref{eq:aim} follows as soon as
$-\kappa+\varphi+\tfrac1{\alpha}>0$ i.e.\
\be\label{eq:onMk-F}
\varphi>\kappa-\tfrac1{\alpha}.
\ee
\item
Gumbel case: $w_t$ is slowly varying
so \eqref{eq:aim} follows as soon as
\be\label{eq:onMk-G}
\varphi>\kappa.
\ee
\end{itemize}
To conclude the proof, we need to check that there exists a choice of the parameters $\kappa, \eta, \rho$, and $\varphi$ that satisfies all our assumptions.
In all cases (Weibull, Gumbel, and Fr\'echet), we have
$\lambda_t=t^{-\kappa}$ and $t_k=k^\eta$.  Our first assumption on
$\kappa$ and $\eta$ comes from Proposition
\ref{prop:two_tables_summable} and is that $2\kappa\eta>1$.  In
addition, for the three possible extreme-value distributions we have
the following assumptions:
\begin{itemize}[leftmargin=*]
\item Weibull: For Proposition~\ref{prop:interpolate}, we need
\smash{$\kappa+\tfrac1{1+\alpha}<\tfrac1\eta<2\kappa$}.
{\jeb For  Proposition~\ref{prop:exponents-vs-tables}, 
we need $\varphi\in(0,1)$,
and for \eqref{eq:onMk-W}, we need
$\varphi>\kappa+\tfrac{1}{1+\alpha}$.}
These inequalities can
only be consistent if $\alpha>1$, which is indeed the contents of
Assumption~\ref{W}.  Assuming that $\alpha>1$, we can satisfy all the
inequalities as follows.  Since $\tfrac2{1+\alpha}<1$ we can pick some
\smash{$\kappa>\tfrac1{1+\alpha}$} 
satisfying \smash{$\tfrac2{1+\alpha}<\kappa+\tfrac1{1+\alpha}<1$}. 
We can then pick {\jeb any $\varphi$ satisfying
$\kappa+\tfrac1{1+\alpha}<\varphi<1$, and any
$\eta$ satisfying $\tfrac1\eta<2\kappa$.}  
\item Fr\'echet:  For Proposition~\ref{prop:interpolate}, we need
\smash{$\kappa+\tfrac1{\alpha}<\tfrac1\eta<2\kappa$};
{\jeb 
for  Proposition~\ref{prop:exponents-vs-tables}, 
we need $\varphi\in(0,1)$;
and for \eqref{eq:onMk-F}, we need
$\varphi>\kappa-\tfrac{1}{\alpha}$.}  
These inequalities are consistent
for any $\alpha>0$, which is why we do not need a stronger assumption
in the Fr\'echet case.   To show that they can all be satisfied, we
start by picking $\kappa$ such that
$\frac1\alpha<\kappa<\frac{1+\alpha}\alpha$.  
{\jeb Note that we then have 
$\kappa+\frac1\alpha<2\kappa$ and $\kappa-\frac1\alpha<1$.  
We then
pick $\eta$ such that $\kappa+\frac1\alpha<\frac1\eta<2\kappa$
and  $\varphi$ such that
$\kappa-\frac1\alpha<\varphi<1$.}  
\item Gumbel:  Proposition~\ref{prop:interpolate} places no
  restrictions on the parameters.
{\jeb For  Proposition~\ref{prop:exponents-vs-tables}, 
we need $\varphi\in(0,1)$
and for \eqref{eq:onMk-G} we need
$\varphi>\kappa$.  In this case we simply pick any
$\kappa,\eta$ such that $0<\kappa<1$ and 
$\kappa<\frac1\eta<2\kappa$, and 
any $\varphi\in(\kappa,1)$.}
\end{itemize}

Having shown that the various inequalities can all be simultaneously 
satisfied, we conclude that the right-hand-side of
\eqref{eq:onMk} goes to 0, which means that 
\be \label{eq:interpolate}
\mathbb{P}\Big(
\frac{{\jeb \sum_{j=3}^{M(t)} }  Z_{m_j}(t)}{Z_{m_1(t)}}\to 0
 \text{ as }t\to\oo\Big)=1.
\ee
\end{proof}

Now we show how to deduce 
Theorem~\ref{th:two_tables}.

{\jeb 
\begin{proof}[{Proof of Theorem~\ref{th:two_tables}}]
Let
\be
\mathcal G=
\Big\{
\frac{\sum_{j=3}^{M(t)} Z_{m_j}(t)}{Z_{m_1}(t)}\to 0
 \text{ as }t\to\oo\Big\}.
\ee
Then \eqref{eq:interpolate} 
implies that $\PP(\mathcal G)=1$.
Let $n_i=n_i(t)$ denote the index of the $i$-th largest table.
We claim that, on $\mathcal G$,
\be
\frac{ \sum_{j=3}^{M(t)}   Z_{n_j}(t)}{Z_{n_1}(t)}\to 0
 \text{ as }t\to\oo.
\ee
First note that $n_1(t)\in\{m_1(t),m_2(t)\}$ for all large enough
$t$, since if $n_1(t)\not\in\{m_1(t),m_2(t)\}$ then
\[
\frac{\sum_{j=3}^{M(t)}  Z_{m_j}(t)}{Z_{m_1}(t)}
\geq \frac{ Z_{n_1}(t)}{Z_{n_1}(t)}= 1,
\]
which is not true on $\mathcal G$, for $t$ large enough.
Assume from now on that $n_1(t)\in\{m_1(t),m_2(t)\}$.
Consider the sets
\[
\mathcal N(t)=\{n_j(t):3\leq j\leq M(t)\}
\qquad\text{and}\qquad
\mathcal M(t)=\{m_j(t):3\leq j\leq M(t)\}.
\]
These two sets have the same size, and neither contains $n_1(t)$.  We
have two cases: either $n_2(t)\in \mathcal M(t)$ or 
$n_2(t)\not\in \mathcal M(t)$.
If $n_2(t)\in \mathcal M(t)$ then there is some $j_0\geq3$ such that 
$n_{j_0}(t)\not\in\mathcal M(t)$.
If $n_2(t)\not\in \mathcal M(t)$ then $\mathcal M(t)=\mathcal N(t)$,
in which case we set $j_0=2$.  In either case, since $j_0\geq 2$ we have
\[
\sum_{j=3}^{M(t)} Z_{m_j}(t)=\sum_{j=3}^{M(t)} Z_{n_j}(t)
-Z_{n_{j_0}}(t)+Z_{n_2}(t)\geq \sum_{j=3}^{M(t)} Z_{n_j}(t).
\]
Thus, using also that, by definition of $n_1(t)$, $Z_{n_1}(t)\geq Z_{m_1}(t)$, we get that, on $\mathcal G$,
\[
\frac{ \sum_{j=3}^{M(t)}   Z_{n_j}(t)}{Z_{n_1}(t)}
\leq
\frac{ \sum_{j=3}^{M(t)}   Z_{m_j}(t)}{Z_{m_1}(t)}\to 0,
\]
as claimed.
\end{proof}
}

\section{Further discussion}
\subsection*{Related models.}
\label{sub:other_models}
Other variants of the Chinese restaurant process perturbed by a disorder have been considered by various authors.\medskip

$\bullet$ In~\cite{MMS}, the authors discuss a model where customer
$n+1$ chooses to sit at table~$i$ with random weight~$0<W_i< 1$ with
probability $\frac1{n} S_i(n) W_i$ and occupies a new table with the
remaining probability. As in our case the  random weights are i.i.d.
If the weight distribution has no atom at~1, the authors prove that,
irrespective of the extreme value type of the weight distribution, 
the tables have microscopic occupancy and the ratio $R_n$ of the largest and second
largest table satisfies
$\lim_{n\to\infty} \PP(R_n\ge x) = \nicefrac1x$ for all $x\geq 1$.\medskip

$\bullet$ Although this does not appear in the literature (as far as we can tell), 
it would be natural, in a `weighted' Chinese restaurant process, to weigh customers instead of tables.
In this model, the $n$-th customer would have weight~$W_n$, and a new customer would join a table with probability proportional to the sum of the weights of the customers already sitting at that table, and create a new table with probability proportional to a parameter~$\theta$.
For light-tailed weight distributions at least, we expect the tables to have macroscopic occupancy in this model, just as in the classical case.
Interestingly, if $\theta = W_0$ is also a random weight, then the tables in this model can be seen as the subtrees of the root in the weighted random recursive tree, see, e.g.,~\cite{Delphin}, where this random tree is introduced and studied.
The fact that the tables in the original Chinese restaurant process can be seen as the subtrees of the root in the (non-weighted) random recursive tree is shown in~\cite{Janson19}.\medskip

$\bullet$ In the statistics literature, see, e.g.,~\cite{IL03} and the references therein, a weighted Chinese restaurant process has been studied. In this model  ``{\it customers each have a fixed affiliation and are biased to sit at tables with other customers having similar affiliations}'', see~\cite{Lindsey14}.
Affiliations can be seen as weights, and they are carried by the customers;
however, their effect on the probability to join a given table is different from 
the model described in the second bullet point just above.

\subsection*{Further results.}
\label{sub:other_results}\ \smallskip

{$\bullet$ In \cite{MSW22} an algorithm that gives access to queries about the Chinese restaurant process in sublinear time is presented. This algorithm is suitable for our model.}

\subsection*{Open problems}\label{sub:open}

{An interesting challenge is to describe 
 the length of the periods, in which the largest table remains the same 
 as a function of time. We conjecture that, for all fitness distributions $\mu$, these periods are stochastically increasing in time, a phenomenon known as \emph{ageing}. As done in \cite{MOS11} for the parabolic Anderson model, one can describe this phenomenon in the weak sense, by looking at the asymptotic probability of a change of the largest table in a given time window, and in the strong sense, by identifying an almost sure upper envelope for the process of the time remaining until the next change of profile. For the winner takes all market this corresponds to an analysis of the  slowing down in the  rate of innovation as the market expands.}


\appendix

\section{Proof of Proposition~\ref{rem:basic}}
\label{app:basic}
 
In this section we prove
Proposition~\ref{rem:basic} of the introduction. 
We use the continuous
time embedding, in which our statement becomes
\[\lim_{t\to \infty}\frac{M(t)}{\log N(t)}= \frac{\theta}{{\rm esssup }\,  \mu}.\]
Recall that from~\eqref{eq:yule} that, in continuous time,
$Z_i(t) = Y_i(W_i(t-\tau_i))$ 
where $(Y_i)_{i\geq 1}$ is a sequence of i.i.d.\ Yule processes of parameter~1,
and, for all $i\geq 1$, $\tau_i$ is the time at which table $i$ is first occupied.
Also, by~\eqref{eq:cv_yule}, almost surely as $t\uparrow\infty$
$Z_i(t)\sim \zeta_i \exp(W_i(t-\tau_i))$,
where $(\zeta_i)_{i\geq 1}$ is a sequence of i.i.d.\ standard exponential random variables.
We also recall that, by definition of the model, 
\be\label{eq:cv_M(t)}
M(t)\sim \theta t\quad\text{ almost surely as }t\uparrow\infty.
\ee

\medskip
First note that, for all $a<{\rm essup }\, \mu$,
there exists a random index $j\geq 1$ such that $W_j>a$. 
Thus, by~\eqref{eq:cv_yule}, for all $t$ large enough, $Z_j(t)\geq \exp(at)$.
Hence, by~\eqref{eq:cv_M(t)}, for all $\eps>0$, for all $t$ is large enough,  
$M(t)/\log N(t) \leq  {(1+\eps)\theta}/{a}$.
If ${\rm essup } \, \mu = \oo$, this concludes the proof, since one can make $a$ arbitrarily large and conclude that $M(t)/\log N(t)\to 0$ almost surely as $t\uparrow\infty$, as claimed.
In the case when $a:={\rm essup}\,  \mu<\infty$,
note that, by~\eqref{eq:cv_yule}, for all $t$ large enough, $N(t)\leq 2\Xi_t \exp(at)$,
where $\Xi_t$ is the sum of $M(t)$ independent standard exponentials. 
Hence, for all $\eps>0$, for all sufficiently large $t$, $\log N(t) \leq  (1+\eps)at$ and $M(t)\geq (1-\eps)t$, by~\eqref{eq:cv_M(t)}, which implies $M(t)/\log N(t) \geq \frac{(1-\eps)\theta}{(1+\eps)a}$. 
Since $\eps>0$ is arbitrary, this implies~(i).\smallskip


Now fix a table number $i\in\mathbb N$.  Recall that, by~\eqref{eq:cv_yule}, 
$Z_i(t) \sim \zeta_i \exp(W_i(t-\tau_i))$,
which clearly implies that $Z_i(t)\to\infty$ 
as $t\uparrow\infty$, because $\tau_i<\infty$ almost surely.
Because $\mu$ has no atom at its essential supremum, 
there exists almost surely a random index 
$j\neq i$ such that $W_j>W_i$.
Using~\eqref{eq:cv_yule} again, we get that $Z_i(t)/Z_j(t)\to
0$ as $t\uparrow\infty$ almost surely. 
If $N(t)$ denotes the number of customers in the restaurant at time $t$, 
then $Z_i(t)/N(t)\leq Z_i(t)/Z_j(t)\to 0$ as $t\uparrow\oo$ almost surely, 
so that table~$i$ cannot have macroscopic occupancy, as claimed in (ii) and~(iii). \smallskip

{
To see (iv), assume that the proportion of customers at the largest table converges almost surely to one. 
On this event, there exists $N>4$ such that 
\[\max_{i\geq 1} \frac{S_i(n)}{n}>\frac34 \text{ for all }n\geq N.\]
Let $i_N$ denote the index of the unique largest table at time $N$: 
the function $f(n):=S_{i_N}(n)/n$ takes a value larger than $3/4$ at $n=N$ and, 
by (iii), it goes to zero
as $n\to\infty$. 
Note that, for all $m\geq N$,
$|f(m+1)-f(m)| \leq \tfrac1N$
and hence there exists some $M\geq N$ such that
\[|f(M)-\tfrac12| \leq \tfrac1N.\]
Hence, $i_N$ is not the index of the largest table at time $M$,
and for the index $i_M$ of the largest table at time $M$, 
$S_{i_M}(M)/M \leq (M-S_{i_N}(M))/M \leq 1/2+\frac1N$, contradicting our assumption.}

\section{Examples of weight distributions}
\label{app:examples}

\subsection{Examples satisfying Assumption~\ref{H}}

We give four examples of probability distributions $\mu$ that
satisfy Assumption~\ref{H}; for each of these, we give formulas for
$A(t)$, $B(t)$, $u_t$, $v_t$ and $w_t$.\smallskip 

\begin{example}[Weibull]
For $\alpha>0$ let $\mu(1-x, 1) = x^{\alpha}$ for all $x\in[0,1]$. Then,
for all $x\geq 0$,
\[t\mu(1-xt^{-\nicefrac1\alpha}, 1) = x^\alpha,\]
and thus Assumption~\ref{H} is satisfied with $A(t) = 1$, $B(t) =
t^{-\nicefrac1{\alpha}}$ and $\Phi(x) = |x|^{\alpha}$ for all $x\leq 0$,
and $\Phi(x) = 0$ otherwise.
We get from \eqref{eq:cond_u} that
\be
u_t=t^{\frac{\alpha}{1+\alpha}},\quad
v_t=1,\quad
w_t=t^{-\frac1{1+\alpha}}.
\ee
Since there is equality in~\ref{eq:maxdomain-2}, the convergence in $L^1$ of~\eqref{eq:maxdomain-3} holds straightforwardly.
\end{example}

\begin{example}[Gumbel bounded] \label{ex:Gumbel}
For $\alpha>0$ let $\mu(1-x,1) =
\exp(1-x^{-\alpha})$ for all $x\in[0,1]$. Then, for all $x\in\mathbb R$,
\[
t\mu(1-(1+\log t)^{-\frac1\alpha}+x(1+\log t)^{-\frac1\alpha-1}/\alpha, 1) 
\to\mathrm{e}^{1-\alpha x}.
\]
Thus, Assumption~\ref{H} is satisfied with $A(t) = 1-(1+\log
t)^{-\frac1\alpha}$, $B(t) = \tfrac1\alpha(1+\log t)^{-\frac1\alpha-1}$ and $\Phi(x) =
\mathrm e^{-x}$ for all $x\in\mathbb R$.  
We identify $u_t$ as in the proof of Lemma~\ref{lem:ut}, namely
$u_t=f^{-1}(t)$ where  
\[f(u)=uA(u)/B(u)=u(\log u)((\log u)^{1/\alpha}-1).\]
This implies that $u_t = t(\log t)^{-\frac{\alpha+1}{\alpha}}(\nicefrac1\alpha+o(1))$, 
and thus $v_t =1-(\log t-(1+\nicefrac1\alpha)\log\log t)^{-\frac1\alpha}(1+o(1))$, and 
$w_t = (\log t)^{-\frac{\alpha+1}\alpha}(\nicefrac1\alpha+o(1))$. 
We now check that~\eqref{eq:maxdomain-3} holds: for all $x>0$,
\ba
 t\mu(A(t) &\,+uB(t), 1)\mathrm du\\
&= \int_x^{1+\log t} t \exp\big(1-((1+\log t)^{-\nicefrac1\alpha} -\tfrac u\alpha(1+\log t)^{-1-\nicefrac1\alpha})^{-\alpha}\big)\mathrm du\\
&= \int_x^{1+\log t} t \exp\big(1-(1+\log t)\big(1 -\tfrac u{\alpha(1+\log t)}\big)^{-\alpha}\big)\mathrm du.
\ea
To use the dominated convergence theorem note that, for all $x\leq u\leq 1+\log t$,
\[0\leq t \exp\big(1-(1+\log t)\big(1 -\tfrac u{\alpha(1+\log t)}\big)^{-\alpha}\big)
\leq t \exp\big(1-(1+\log t)\big(1 +\tfrac u{1+\log t}\big)\big) = \mathrm e^{-u},\]
because, for all $w\in(0,1)$, $(1-w)^{-\alpha}\geq 1+\alpha w$.
As $u\mapsto \mathrm e^{-u}$ is integrable on $[x,\infty)$,
the dominated convergence theorem applies and we can conclude that~\eqref{eq:maxdomain-3} holds.
\end{example}

\begin{example}[Gumbel unbounded] 
For $\alpha>0$ let
$\mu(x,\infty) = \exp(-x^{\alpha})$ for all $x\geq 0$. 
Then
\[
t\mu((\log t)^{\frac1\alpha}+x(\log t)^{\frac1\alpha-1}/\alpha, \infty) 
\to \mathrm{e}^{-x}.
\]
Thus, Assumption~\ref{H} is satisfied with $A(t) = (\log
t)^{\frac1\alpha}$, $B(t) = \tfrac1\alpha(\log t)^{\frac1\alpha-1}$ and $\Phi(x) =
\mathrm e^{-x}$ for all $x\in\mathbb R$.
Similarly to before we have $u_t=f^{-1}(t)$ where this time
 $f(u)=u(\log u)$. 
This implies that $u_t = (1+o(1))t/\log t$, and 
thus $v_t = (\log t)^{\nicefrac1\alpha}-(\log\log t) (\log t)^{\nicefrac1\alpha-1}(\nicefrac1\alpha+o(1))$, and
$w_t \sim \tfrac1\alpha(\log t)^{\frac1\alpha-1}$.
Checking~\eqref{eq:maxdomain-3} is similar to Example~\ref{ex:Gumbel}.
\end{example}

\begin{example}[Fr\'echet]
 For $\alpha>0$ let  $\mu(x,\infty) =
x^{-\alpha}$ for all $x\geq 1$. Then
\[t\mu(xt^{\nicefrac1{\alpha}}, \infty) = x^{-\alpha},\]
and thus Assumption~\ref{H} is satisfied with $A(t) = 0$, 
$B(t) = t^{\nicefrac1\alpha}$, 
and $\Phi(x) = x^{-\alpha}$ for all $x> 0$, and $\Phi(x) = \infty$ for all $x\leq 0$.
As discussed, in this case we take $v_t=0$ and we take 
$u_t = t$ instead of taking it as a
solution of~\eqref{eq:cond_u}.
We get that $w_t = B(t)= t^{\nicefrac1\alpha}$.
\end{example}


\subsection{Examples satisfying Assumption~\ref{G}}

We list a few examples satisfying Assumption~\ref{G}.
When $M=1$ we write
$\mu(x, 1) = \exp(-m(x))$ for all $x\in [0,1)$. Then
the  following  weight
distributions, given by a suitable  function $m$,  all satisfy
Assumption~\ref{G}.
\begin{multicols}{2}
\begin{enumerate}
\itemsep4pt
\item[(a)] $m(x) = (1-x)^{-\alpha}-1$ for $\alpha>0$;
\item[(b)] $m(x) = \mathrm e^{\frac1{1-x}}-\mathrm e$;
\item[(c)] $m(x) = \frac x{1-x}$;
\item[(d)] $m(x) = \mathrm e^{\frac1{\sqrt{1-x}}}-\mathrm e$;
\item[(e)] $m(x) = \mathrm{tan}(\pi x/2)$;
\end{enumerate}
\end{multicols}
\noindent
Here (a--e) also satisfy von Mises' condition~\cite[Proposition~1.1(b)]{Resnick},
which is a sufficient condition for $\mu$ to belong to the domain of
attraction of the Gumbel distribution. 
{Note that we are unable to prove that Assumption~\ref{G} 
is satisfied by all distributions that satisfy the von Mises condition. 
We are also unable to provide an example of weight distribution that belongs to the domain of attraction of the Gumbel distribution, 
satisfies Assumption~\ref{G}, and does not satisfies the von Mises condition.
However, the function $m(x) = \log\big(\tfrac{\mathrm e}{1-x}\big)\log\log\big(\tfrac{\mathrm e}{1-x}\big)$, for all $x\in[0,1)$, corresponds to a weight distribution $\mu$ that is in the domain of attraction of the Gumbel distribution and does not satisfy Assumption~\ref{G} (this distribution does not satisfy the von Mises condition).
Examples (a--e) are all bounded weight distributions.
The following is an
unbounded example:
\begin{itemize}
\item[(f)]
$\mu(x,\infty) = \exp(-x^\alpha)$ for any $\alpha>1$.
\end{itemize}

We prove next that
(a) satisfies Assumption~\ref{G}.  The others are similar.
Recall  that, in this example, 
$\mu(1-x, 1) = \exp(1-x^{-\alpha})$ for some $\alpha>0$
and all $x\in(0,1]$.
Assumption~\ref{H} is satisfied with
\[
A(t) = 1 - (1+\log t)^{-\frac1\alpha} 
\quad\text{ and }\quad 
B(t) = \tfrac1{\alpha}(1+\log t)^{-\frac{\alpha+1}{\alpha}}.
\]
We also set 
$\hat A(t) = 1-A(t)=(1+\log t)^{-\frac1\alpha}$.
For all $t\geq 0$ and  all $x\in\mathbb R$, we have
\[
t\mu(A(t)+xB(t), 1) =t\exp\bigg[1-\hat A(t)^{-\alpha}
\big(1-\tfrac{xB(t)}{\hat A(t)}\big)^{-\alpha}\bigg].
\]
Now note that, for all $y<1$, $(1-y)^{-\alpha}\geq 1 + \alpha y$.
Thus, for all $x< \hat A(t)/B(t)=\alpha(1+\log t)$, we have 
\[
t\mu(A(t)+xB(t), 1) 
\leq t\exp\bigg[1-\hat A(t)^{-\alpha} \big(1+\alpha\tfrac{xB(t)}{\hat A(t)}\big)\bigg]
=t\exp\big(1-(1+\log t) -  x\big)= \mathrm e^{-x}.
\]
Making the change of variables $t\mapsto u_t$, this says:
\[
\mbox{if } x\in(-\oo,\alpha(1+\log t)) \mbox{ then }
\Phi_t(x)\leq \e{-x},
\]
which establishes the 
upper bound in Assumption~\ref{G}\ref{G2}.
For the lower bound, note that there exists a constant $C>0$ such
that, for all $y\in [-1, \nicefrac12]$,
$(1-y)^{-\alpha}\leq 1+\alpha y + Cy^2$.  Therefore,
for all $x\in (-\hat A(t)/B(t), \hat A(t)/2B(t))$ we have
\[\begin{split}
t\mu(A(t)+xB(t), 1) &\geq
t\exp\bigg[1-\hat A(t)^{-\alpha} 
\big(1+\tfrac{\alpha xB(t)}{\hat A(t)}+C \tfrac{x^2B(t)^2}{\hat A(t)^2}
\big)\bigg]\\
&= \exp\Big(-x-C\tfrac{x^2B(t)^2}{\hat A(t)^{2+\alpha}}\Big).
\end{split}\]
Note that
\[
\frac{B(t)^2}{\hat A(t)^{2+\alpha}} = \frac{1}{\alpha^2(1+\log t)},
\]
thus after the change of variables $t\mapsto u_t$ we have
\[
\mbox{if } x\in(-\alpha(1+\log t),\tfrac12\alpha(1+\log t)) \mbox{ then }
\Phi_t(x)\geq \e{-x}\exp\big(-x^2\tfrac{C}{\alpha^2(1+\log t)}\big)
\]
which concludes the proof of the lower bound in 
Assumption~\ref{G}\ref{G2}.
\pagebreak[3]\smallskip

For Assumption~\ref{G}\ref{G3},
recall that $u_t$ is defined as the unique solution of
\[
\alpha u_t \big(1 - (1+\log u_t)^{-\frac1\alpha}\big) = 
t (1+\log u_t)^{-\frac{\alpha}{\alpha+1}}.
\]
Hence $\log u_t \sim \log t$ as $t\uparrow\infty$ and $u_t = t\hat u_t$ with $\log \hat u_t = o(\log t)$. Thus, 
$\alpha\hat u_t  \sim (\log t)^{-\frac{\alpha}{\alpha+1}}$ and so
$\hat u_t \sim \frac1\alpha (\log t)^{-\frac{\alpha}{\alpha+1}}$.
This implies
\[
u_t = (\nicefrac1\alpha+o(1))t(\log t)^{-\frac{\alpha}{\alpha+1}}.
\]
Therefore
\[
L_1(t)=u_t/t=\frac{1/\alpha+o(1)}{(\log t)^{\frac\alpha{1+\alpha}}}
\]
so clearly $L_1(t)\log\log t\to 0$.

\section{A large deviations bound for the Yule process}

{\jeb 
\begin{lemma}\label{lem:Peter}
Let $(Y_t \colon t\geq 0)$ be a Yule process with
parameter~$\lambda>0$ and let $R>0$.
Then,
\[\mathbb P\big( 
\sup_{t\geq 0}
\big| \log Y_t-\lambda t\big| \geq R \big) 
\leq 2\Gamma(\nicefrac12)\e{-R/2}.\]
\end{lemma}

\begin{proof}
First note that, for any $T\geq 0$,
\begin{align}
\mathbb P\big( 
\sup_{t\in[0,T]}
\big| \log Y_t-\lambda t\big| \geq R \big) &\leq
\PP\big( \sup_{t\in[0,T]}
 \log Y_t-\lambda t \geq R \big) +
\PP\big( \inf_{t\in[0,T]}
\log Y_t-\lambda t \leq -R \big) \notag\\
&=
\PP\big( \sup_{t\in[0,T]}
 \frac{Y_t}{\e{\lambda t}} \geq \e{R} \big) +
\PP\big( \inf_{t\in[0,T]}
 \frac{Y_t}{\e{\lambda t}} \leq \e{-R} \big).
 \label{first decomposition}
\end{align}
Now, $(Y_t/\e{\lambda t})_{t\geq 0}$ is a martingale started at~1.
Thus 
by Doob's maximal inequality,
and using 
$\mathbb E[Y_T/\e{\lambda T}]=1$, we have
\be\label{eq:borne1}
\PP\big( \sup_{t\in[0,T]}
 \frac{Y_t}{\e{\lambda t}} \geq \e{R} \big)
\leq \frac{\EE[\frac{Y_T}{\e{\lambda T}}] }{\e{R}}=\e{-R}.
\ee
On the other hand, {\cec for any $0<\eps<1$,}
\[
\PP\big( \inf_{t\in[0,T]}
 \frac{Y_t}{\e{\lambda t}} \leq \e{-R} \big)
=\PP\big( \sup_{t\in[0,T]}
 \big(\frac{Y_t}{\e{\lambda t}} \big)^{-\eps}\geq \e{\eps R} \big).
\]
Since $x\mapsto x^{-\eps}$ is convex, 
$\big(\frac{Y_t}{\e{\lambda t}} \big)^{-\eps}$ is a submartingale.  Thus, 
by Doob's maximal inequality again,
\[
\PP\big( \sup_{t\in[0,T]}
 \big(\frac{Y_t}{\e{\lambda t}} \big)^{-\eps}\geq \e{\eps R} \big)
\leq
\frac{\EE[\big(\frac{Y_T}{\e{\lambda T}}\big)^{-\eps}] }{\e{\eps R}}
=\e{-\eps R+\eps\lambda T}\EE[Y_T^{-\eps}].
\]
{\cec
To finish the proof we recall \cite[Section~III.5]{AthreyaNey}
that $Y_T$ has the geometric distribution with parameter
$p=\e{-\lambda T}$.  By Proposition \ref{prop:Cecile} below,
\[
\EE[Y_T^{-\eps}]\leq 
\frac{\e{-\eps\lambda T}}{1-\e{-\lambda T}}\Gamma(1-\eps).
\]
Thus, 
\[
\PP\big( \inf_{t\in[0,T]}
 \frac{Y_t}{\e{\lambda t}} \leq \e{-R} \big)
\leq \frac{\e{-\eps R}}{1-\e{-\lambda T}}\Gamma(1-\eps)
\]
Taking $\eps = \nicefrac12$ and combining with~\eqref{first decomposition}
and~\eqref{eq:borne1}, we get that
\[\mathbb P\big( 
\sup_{t\in[0,T]}
\big| \log Y_t-\lambda t\big| \geq R \big)
\leq  \frac{2\Gamma(\nicefrac12)\e{- R/2}}{1-\e{-\lambda T}}.
\]
As the event on the left is increasing in $T$, 
letting $T\uparrow\infty$ concludes the proof.
}
\end{proof}
}

{\cec 
\begin{proposition}\label{prop:Cecile}
Let $Y$ be geometrically distributed with parameter $p\in[0,1]$ and
let $\eps\in(0,1)$.  Then 
\[
\EE[Y^{-\eps}]\leq \frac{p^\eps}{1-p}\Gamma(1-\eps).
\]
\end{proposition}
\begin{proof}
Using the change of variables $u=x\log(1/(1-p))$:
\[\begin{split}
\EE[Y^{-\eps}]&=\sum_{k=1}^\oo \frac{p(1-p)^{k-1}}{k^\eps}
\leq \frac{p}{1-p}\sum_{k=1}^\oo 
\int_{k-1}^k \dd x \; \frac{(1-p)^{x}}{x^\eps}\\
&=\frac{p}{1-p}\int_0^\oo\dd x\; x^{-\eps} \exp(-x\log(1/(1-p)))\\
&=\frac{p}{1-p}(\log(\tfrac1{1-p}))^{\eps-1}
\int_0^\oo \dd u\; u^{-\eps}\e{-u}
=\frac{p}{1-p}(\log(\tfrac1{1-p}))^{\eps-1}\Gamma(1-\eps)\\
&\leq \frac{p^\eps}{1-p}\Gamma(1-\eps),
\end{split}\]
where the last step used $1-p\leq\e{-p}$.
\end{proof}
}

\bibliographystyle{alpha}
\bibliography{ICRP}
%
%
%
%
%
%

\end{document}